\theoremstyle{plain}
\newtheorem{theorem}{Theorem}[section]
\newtheorem{proposition}[theorem]{Proposition}
\newtheorem{lemma}[theorem]{Lemma}
\newtheorem{corollary}[theorem]{Corollary}
\newtheorem{remark}[theorem]{Remark}
\theoremstyle{definition}
\newtheorem{definition}[theorem]{Definition}
\newtheorem{conjecture}[theorem]{Conjecture}
\DeclareMathOperator{\GL}{GL}
\DeclareMathOperator{\AR}{AR}
\DeclareMathOperator{\GR}{GR}
\DeclareMathOperator{\Q}{Q}
\DeclareMathOperator{\codim}{codim}
\DeclareMathOperator{\spa}{span}
\DeclareMathOperator{\Hom}{Hom}
\DeclareMathOperator{\PR}{PR}
\DeclareMathOperator{\SR}{SR}
\DeclareMathOperator{\Gr}{Gr}
\DeclareMathOperator{\height}{ht}
\DeclareMathOperator{\Ass}{Ass}
\DeclareMathOperator{\rank}{rank}
\DeclareMathOperator{\Min}{Min}
\DeclareMathOperator{\str}{str}
\DeclareMathOperator{\Spec}{Spec}
\DeclareMathOperator{\ch}{char}
\DeclareMathOperator{\bias}{bias}
\DeclareMathOperator{\Prob}{Prob}
\begin{document}
\title{Bounds for geometric rank in terms of subrank}
\date{}
\author[Q-Y.~Chen]{Qiyuan~Chen}
\address{State Key Laboratory of Mathematical Sciences, Academy of Mathematics and Systems Science, Chinese Academy of Sciences, Beijing 100190, China}
\email{chenqiyuan@amss.ac.cn}
\author[K.~Ye]{Ke Ye}
\address{State Key Laboratory of Mathematical Sciences, Academy of Mathematics and Systems Science, Chinese Academy of Sciences, Beijing 100190, China}
\email{keyk@amss.ac.cn}
\begin{abstract}
For tensors of fixed order,  we establish three types of upper bounds for the geometric rank in terms of the subrank.  Firstly,  we prove that,  under a mild condition on the characteristic of the base field,  the geometric rank of a tensor is bounded by a function in its subrank in some field extension of bounded degree.  Secondly,  we show that,  over any algebraically closed field,  the geometric rank of a tensor is bounded by a function in its subrank.  Lastly,  we prove that,  for any order three tensor over an arbitrary field,  its geometric rank is bounded by a quadratic polynomial in  its subrank.  Our results have several immediate but interesting implications: \emph{(1)} We answer an open question posed by Kopparty,  Moshkovitz and Zuiddam concerning the relation between the subrank and the geometric rank;  \emph{(2)} For order three tensors,  we generalize the Biaggi-Chang-Draisma-Rupniewski (resp.  Derksen-Makam-Zuiddam) theorem on the growth rate of the border subrank (resp.  subrank),  in an optimal way;  \emph{(3)} For order three tensors,  we generalize the Biaggi-Draisma-Eggleston theorem on the stability of the subrank,  from the real field to an arbitrary field; \emph{(4)} We confirm the open problem raised by Derksen,  Makam and Zuiddam on the maximality of the gap between the subrank of the direct sum and the sum of subranks; \emph{(5)} We derive,  for the first time,  a de-bordering result for the border subrank and upper bounds for the partition rank and analytic rank in terms of the subrank; \emph{(6)} We reprove a gap result for the subrank.
\end{abstract}
\maketitle


\section{Introduction and main results}
\subsection{Introduction}
As a generalization of matrices,  tensors are widespread across several fields of mathematics, often known by different names.  They appear as multilinear polynomials in algebraic geometry \cite{adiprasito2021schmidt,LZ24,bik2025strength}; as multilinear maps in complexity theory \cite{Landsberg12,kopparty2020geometric,CM22}; as multilinear functions \cite{GW11, CLP17,Gowers21} in combinatorics; and as multiarrays in numerical analysis \cite{LP10,Derksen16,HY23}.  In these respective fields,  various tensor ranks have been introduced to facilitate a quantitative investigation of the multifaceted nature of tensors.  For instance,  the \emph{subrank} \cite{Strassen87} is proposed to measure the complexity of tensors; the \emph{analytic rank} \cite{GW11} quantifies the randomness of tensors; while the \emph{slice rank} \cite{Terrence16} and the \emph{partition rank} \cite{Naslund20} are used to characterize the structure of tensors; the \emph{$G$-stable rank} \cite{Derksen22} is defined to study tensors by the geometric invariant theory; and the \emph{geometric rank} \cite{kopparty2020geometric} is employed to characterize the geometry of tensors.  

Since the pioneering work \cite{GT09},  it has been realized that some of the aforementioned tensor ranks are deeply related to each other,  indicating the interplay among seemingly different structures characterized by these ranks.  For example,  multiple authors \cite{GW11,lovett2018analytic,adiprasito2021schmidt} have conjectured that the partition rank is linearly equivalent to the analytic rank,  which has been confirmed over large fields \cite{cohen2023partition}.  The partition rank is also conjectured to be linearly equivalent to the geometric rank \cite{Schmidt85,adiprasito2021schmidt}.  Moreover,  the linear equivalence between the analytic rank and the geometric rank has been independently established in \cite{chen2024stability,baily2024strength},  and it has been proved that the slice rank is linearly equivalent to the $G$-stable rank \cite{Derksen22}. Notably,  the linear equivalence among these ranks is equivalent to the stability and the asymptotic additivity of the partition rank.  We refer interested readers to \cite{chen2024stability} and references therein for more details.   

Among all the tensor ranks discussed above,  the subrank is especially distinctive,  for the following two reasons: \emph{(1)} The subrank lies at the intersection of the complexity theory \cite{Strassen88,Strassen91,AFL15,kopparty2020geometric,CVZ21} and combinatorics \cite{Terrence16,CLP17,NS17,FGL21},  playing crucial roles in both fields.  Thus,  the subrank may potentially serve as a bridge between the complexity theory and combinatorics.  For comparison,  other ranks do not exhibit such a feature; \emph{(2)} Over an algebraically closed field,  the subrank of a generic $n$-dimensional,  order-$d$ tensor is $\Theta(n^{1/{(d-1)}})$ \cite{derksen2024subrank},  whereas the generic value of all other ranks is $\Theta(n)$.  This implies a large separation between the subrank and other ranks for a generic tensor,  making it challenging to even conjecture a precise relation between the subrank and other ranks.  By contrast,  it is worth noting that the linear equivalence among other ranks has been extensively studied in recent years \cite{GW11,Milicevi19,adiprasito2021schmidt,Derksen22, cohen2023partition, GD24,chen2024stability}.

Suppose $\mathbb{K}$ is a field.   Given a tensor $T \in \mathbb{K}^{n_1} \otimes \cdots \otimes \mathbb{K}^{n_d}$,  we denote by $\GR(T)$ (resp.  $\Q(T)$) the geometric rank (resp.  subrank) of $T$ (cf.  Definition~\ref{def:rank}).  It is observed in \cite{kopparty2020geometric} that $\Q(T) \le \GR(T)$.  However,  in the existing literature,  there is no known upper bound for $\GR(T)$ (or other ranks) in terms of $\Q(T)$.  In fact,  the existence of such an upper bound is posed as an open problem in \cite[Section~9]{kopparty2020geometric}.  The primary goal of this paper is to establish upper bounds for the geometric rank in terms of the subrank.  As applications,  we answer two open questions raised respectively in \cite[Section~9]{kopparty2020geometric} and \cite[Section~6]{derksen2024subrank}.  We also generalize existing results \cite[Theorem~1.2]{derksen2024subrank},  \cite[Theorem~2]{biaggi2025border} and \cite[Theorem~1.5]{biaggi2025real},  to a large extent.  Moreover,  we obtain three new results on the subrank and provide a simple proof for the gap theorem of the subrank.

\subsection{Main results}
The main contributions of this paper consist of the three theorems stated below,  each addressing a distinct but equally important scenario.  Given functions $f,  g,  h$ on $\mathbb{N}$,  we write $f = O(n)$,  $g = \Theta(n)$ and $f  \asymp h$ if there are positive constants $c,c_1,  c_2,c_3,  c_4$ such that  
\[
f(n) \le c n,\quad c_1 n  \le g(n) \le c_2 n,\quad c_3 f \le h \le c_4 f
\]
for sufficiently large $n \in \mathbb{N}$.  For a tensor $T \in \mathbb{K}^{n_1} \otimes \cdots \otimes \mathbb{K}^{n_d}$,  definitions of the subrank $\Q(T)$,  the border subrank $\underline{\Q}(T)$ and the geometric rank $\GR(T)$ can be found in Definition~\ref{def:rank}. Moreover,  for each field extension $\mathbb{F}$ of $\mathbb{K}$,  we may regard $T$ as a tensor in $\mathbb{F}^{n_1} \otimes_{\mathbb{F}} \cdots  \otimes_{\mathbb{F}} \mathbb{F}^{n_d}$ as follows.  Since $T$ is a tensor over $\mathbb{K}$,  we may write $T$ as a multiarray $T = (T_{i_1,\dots, i_d})$ where $T_{i_1,\dots,  i_d} \in \mathbb{K}$ and $i_1 \in [n_1],  \dots,  i_d \in [n_d]$.  By the inclusion $\mathbb{K} \subseteq \mathbb{F}$,  $T$ can also be viewed as a multiarrary over $\mathbb{F}$. 

\begin{theorem}[$\GR(T)$ vs.  $\Q(T)$ over large fields]\label{thm:general}
There exist functions $A$ and $B$ on $\mathbb{N} \times \mathbb{N}$ with the following property.  Suppose $\mathbb{K}$ is a field such that either $\ch(\mathbb{K})=0$ or $\ch(\mathbb{K})>d$.  For any tensor $T \in \mathbb{K}^{n_1} \otimes \cdots \otimes \mathbb{K}^{n_d}$,  there is a field extension $\mathbb{L}/\mathbb{K}$ such that  $[\mathbb{L} : \mathbb{K}] \le B(d, \GR(T))$ and $\GR(T) \le A(d,  \Q_{\mathbb{L}} (T))$.  Here $\Q_{\mathbb{L}} (T)$ denotes the subrank of $T$ as an element in $\mathbb{L}^{n_1} \otimes_{\mathbb{L}} \cdots \otimes_{\mathbb{L}} \mathbb{L}^{n_d}$.
\end{theorem}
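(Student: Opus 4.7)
The plan is to reduce, under the hypothesis on $\ch(\mathbb{K})$, to the linear equivalence among $\GR$, partition rank $\PR$, and analytic rank $\AR$ over the algebraic closure $\overline{\mathbb{K}}$, and then to produce a large diagonal witness for the subrank by restricting $T$ to a small generic subtensor of maximal geometric rank.

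First, I would observe that $\GR$ is invariant under the base change $\mathbb{K} \hookrightarrow \overline{\mathbb{K}}$, because it is defined as the codimension of an affine variety that is cut out by equations with coefficients in $\mathbb{K}$. Under the char hypothesis, the results of \cite{cohen2023partition,chen2024stability,baily2024strength} yield $\GR(T) \asymp \PR(T) \asymp \AR(T)$ over $\overline{\mathbb{K}}$, so it suffices to construct a subrank witness whose size is at least a fixed function of $\GR(T)$, and to do so over a finite extension $\mathbb{L}/\mathbb{K}$ of degree bounded only in terms of $d$ and $\GR(T)$.

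The second, and main, step is the following key lemma. Suppose $\GR(T) \geq g(d,m)$ for a suitable function $g$. Then for generic $A_i \colon \overline{\mathbb{K}}^{n_i} \to \overline{\mathbb{K}}^{m}$, the restriction
\[
T' := (A_1 \otimes \cdots \otimes A_d)(T) \in \overline{\mathbb{K}}^{m} \otimes \cdots \otimes \overline{\mathbb{K}}^{m}
\]
has $\GR(T') = m$, i.e., $T'$ has full geometric rank. The proof combines semicontinuity of $\GR$ on the restriction space $\prod_i \overline{\mathbb{K}}^{m \times n_i}$ (so the locus of maximal $\GR$ is Zariski-open) with non-emptiness of this locus for $m$ proportional to $\GR(T)$, which follows from the equivalence with partition rank: if every generic $m \times \cdots \times m$ restriction of $T$ had strictly lower $\GR$, a dimension count on the restriction map would contradict the $\PR$ lower bound on $T$. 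Once $T'$ has full $\GR$, the Derksen--Makam--Zuiddam--type lower bound implies $\Q(T') \gtrsim_d m^{1/(d-1)}$, and monotonicity of subrank under restriction gives $\Q_{\overline{\mathbb{K}}}(T) \geq \Q(T')$.

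Third, I would descend from $\overline{\mathbb{K}}$ to a finite extension. If $\mathbb{K}$ is infinite the $A_i$ can be taken in $\mathbb{K}^{m \times n_i}$ (a generic choice suffices), so $T' \in \mathbb{K}^{m} \otimes \cdots \otimes \mathbb{K}^{m}$; otherwise a preliminary small extension is taken. The subrank-$r$ witness variety for $T'$ is the $\mathbb{K}$-scheme $\{(B_1,\dots,B_d) : (B_1 \otimes \cdots \otimes B_d)(T') = \langle r \rangle\}$, which lives in an ambient space of dimension $r \cdot dm$ and is cut out by polynomial equations of degree $d$, so its complexity depends only on $d$ and $m \asymp \GR(T)$. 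A Noether-normalization / Bezout argument then yields an $\mathbb{L}$-point with $[\mathbb{L}:\mathbb{K}] \leq B(d,\GR(T))$, giving the required field extension. The main obstacle will be the non-emptiness assertion in the second step---that a generic small restriction of a high-$\GR$ tensor attains the maximum geometric rank---which is the analytic heart of the argument; a secondary difficulty is tracking the extension degree in the descent step so that it depends on $d$ and $\GR(T)$ alone and not on the original dimensions $n_i$.
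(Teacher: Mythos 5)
There are genuine gaps here, and the central one is circularity. Your second step ends by asserting that once $T'\in(\overline{\mathbb{K}}^m)^{\otimes d}$ has full geometric rank, a ``Derksen--Makam--Zuiddam--type lower bound'' gives $\Q(T')\gtrsim_d m^{1/(d-1)}$. The result of \cite{derksen2024subrank} only computes the subrank of a \emph{generic} tensor; the locus $\{\Q\ge r\}$ is merely constructible, not open, so the generic lower bound does not transfer to every tensor of maximal geometric rank, and $T'$ is a generic restriction of the fixed tensor $T$, not a generic point of $(\overline{\mathbb{K}}^m)^{\otimes d}$. A statement of the form ``$\GR$ large forces $\Q$ large over $\overline{\mathbb{K}}$'' is precisely Theorem~\ref{thm:general1} (and, quantitatively, Theorem~\ref{3-GR<Q infinite}) of the paper --- it is the thing being proved, so it cannot be quoted. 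Your key restriction lemma is also unproven: you need that a generic $m\times\cdots\times m$ restriction of a tensor with $\GR(T)\ge g(d,m)$ attains $\GR(T')=m$, and the proposed justification (``a dimension count on the restriction map would contradict the $\PR$ lower bound'') is not an argument; no such compression statement for $\GR$ (or $\PR$) is available off the shelf, and you yourself flag it as the analytic heart without supplying it.

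The descent step has a further problem in the finite-field case: choosing ``generic'' $A_i$ requires avoiding a bad locus whose defining equations have degrees that a priori depend on $n_1,\dots,n_d$, so the ``preliminary small extension'' cannot be bounded by a function of $d$ and $\GR(T)$ without additional input. Obtaining bounds independent of the ambient dimensions is exactly what the paper's machinery is built for: Proposition~\ref{small subalgebra general field} (Ananyan--Hochster small subalgebras of $R_1$-sequences, valid when $\ch(\mathbb{K})=0$ or $\ch(\mathbb{K})>d$, which is where the characteristic hypothesis enters) and Proposition~\ref{solution of constructable set} (rational points in extensions of degree bounded independently of $n$). The paper then proves Lemma~\ref{d-minGR-2}: if the span of the $d$-th slices of $T$ has $\GR$ at least $M_1(d,c)$, a greedy slice-by-slice construction produces an $I_c$ restriction over an extension of degree at most $M_2(d,c)$. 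Taking the contrapositive, every $s$-dimensional space of slices contains a low-$\GR$ slice, and the slice-wise formula for $\GR$ (Proposition~\ref{computing GR}), combined with Proposition~\ref{codim for infinite} over infinite fields and a Grassmannian count plus analytic-rank estimate (Proposition~\ref{codim for finite}) over finite fields, yields $\GR(T)\le A(d,\Q_{\mathbb{L}}(T))$. Your outline replaces this construction with two unproved (and in one case circular) claims, so as it stands it does not constitute a proof.
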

When $\mathbb{K}$ is algebraically closed field,  we can remove the condition on $\ch(\mathbb{K})$ in Theorem~\ref{thm:general}.
\begin{theorem}[$\GR(T)$ vs.  $\Q(T)$ over algebraically closed fields]\label{thm:general1}
There exists a function $C$ on $\mathbb{N} \times \mathbb{N}$ such that for any algebraically closed field $\mathbb{K}$ and any tensor $T \in \mathbb{K}^{n_1} \otimes \cdots \otimes \mathbb{K}^{n_d}$,  we have $\GR(T) \le C(d,  \Q(T))$.
\end{theorem}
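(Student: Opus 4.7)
The plan is to split by $\ch(\mathbb{K})$. When $\ch(\mathbb{K})=0$ or $\ch(\mathbb{K})>d$, Theorem~\ref{thm:general} applies to $T$. Since $\mathbb{K}$ is algebraically closed, any finite extension is trivial, so the extension $\mathbb{L}/\mathbb{K}$ in that theorem's conclusion satisfies $\mathbb{L}=\mathbb{K}$ and $\Q_{\mathbb{L}}(T)=\Q(T)$. This immediately gives $\GR(T)\le A(d,\Q(T))$, so $C:=A$ settles this case.

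When $\ch(\mathbb{K})=p\le d$, the plan is to reduce to the previous case by lifting to characteristic zero. Let $W=W(\mathbb{K})$ denote the Witt vectors of $\mathbb{K}$ (a complete DVR of characteristic zero with residue field $\mathbb{K}$), and let $\mathbb{L}$ be the algebraic closure of its fraction field. Lift each entry of $T$ to $W$ via its Teichm\"uller representative (well defined since $\mathbb{K}$ is perfect) to form $\widetilde T$, viewed over $\mathbb{L}$. The large-characteristic case applied to $\widetilde T$ yields $\GR_{\mathbb{L}}(\widetilde T)\le C_{0}(d,\Q_{\mathbb{L}}(\widetilde T))$ for some function $C_{0}$, while upper semicontinuity of fiber dimension for the family of zero-schemes of $\widetilde T$ over $\Spec W$ gives $\GR_{\mathbb{K}}(T)\le \GR_{\mathbb{L}}(\widetilde T)$.

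The principal obstacle is the subrank comparison $\Q_{\mathbb{L}}(\widetilde T)\le \Q_{\mathbb{K}}(T)$, which is nontrivial because subrank, unlike border subrank, is not semicontinuous under specialization. The plan is to specialize a subrank witness directly: given $A_{k}\in\mathbb{L}^{q\times n_{k}}$ with $(A_{1}\otimes\cdots\otimes A_{d})\widetilde T=\langle q\rangle$, extend $W$ to a DVR $S$ and row-rescale $A_{k}\mapsto\operatorname{diag}(\lambda_{k,1},\dots,\lambda_{k,q})A_{k}$ with $\lambda_{k,i}$ chosen so that $\lambda_{k,i}A_{k,i,j}\in S$ for all $j$. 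A valuation computation shows that the rescaled diagonal entry $\prod_{k}\lambda_{k,i}$ has nonnegative valuation, and survives reduction modulo the maximal ideal of $S$ (whose residue field equals $\mathbb{K}$) as a unit precisely when the lowest-valuation terms of $\sum_{j_{1},\dots,j_{d}}\widetilde T_{j_{1},\dots,j_{d}}\prod_{k}A_{k,i,j_{k}}$ do not cancel. The plan is to defeat such cancellation by replacing the witness with a generic representative of its $\GL_{q}(\mathbb{L})^{d}$-orbit, so that the reduced matrices over $\mathbb{K}$ witness $\Q_{\mathbb{K}}(T)\ge q$. Chaining then gives $\GR_{\mathbb{K}}(T)\le \GR_{\mathbb{L}}(\widetilde T)\le C_{0}(d,\Q_{\mathbb{L}}(\widetilde T))\le C_{0}(d,\Q_{\mathbb{K}}(T))$, so $C:=C_{0}$ works.
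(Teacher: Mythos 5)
Your first case is fine: over an algebraically closed field with $\ch(\mathbb{K})=0$ or $\ch(\mathbb{K})>d$, Theorem~\ref{thm:general} does give the bound with $\mathbb{L}=\mathbb{K}$. The fatal problem is the small-characteristic case: the inequality $\Q_{\mathbb{L}}(\widetilde T)\le \Q_{\mathbb{K}}(T)$ that your specialization argument is designed to prove is false, so no choice of a ``generic'' orbit representative can defeat the cancellation. Concretely, take $\mathbb{K}=\overline{\mathbb{F}}_2$ and let $T=\sum_{i,j\in\{0,1\}} e_i\otimes e_j\otimes e_{i+j \bmod 2}\in\mathbb{K}^2\otimes\mathbb{K}^2\otimes\mathbb{K}^2$ be the multiplication tensor of $\mathbb{K}[\mathbb{Z}/2]$. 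Its entries are $0$ and $1$, so its Teichm\"uller lift $\widetilde T$ is the same $0/1$ array viewed over the characteristic-zero field $\mathbb{L}$. Over $\mathbb{L}$ the algebra $\mathbb{L}[\mathbb{Z}/2]\cong\mathbb{L}\times\mathbb{L}$ is semisimple, so $\widetilde T$ is equivalent to $I_2$ and $\Q_{\mathbb{L}}(\widetilde T)=2$; over $\mathbb{K}$ the algebra $\mathbb{K}[\mathbb{Z}/2]\cong\mathbb{K}[u]/(u^2)$ is local with $\mathfrak{m}^2=0$, and a short check gives $\Q_{\mathbb{K}}(T)=1$: a witness would force $a_1b_2=a_2b_1=0$ with $a_1b_1,\,a_2b_2$ a basis (the output map must be injective), but if $a_1$ is a unit then $b_2=0$, so $a_1\in\mathfrak{m}$, likewise $b_1\in\mathfrak{m}$, whence $a_1b_1\in\mathfrak{m}^2=0$. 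The same happens for $\mathbb{Z}/p$ in characteristic $p$ for every $p$. So the subrank genuinely increases under this lift, and the lowest-valuation terms in your diagonal entries \emph{must} cancel for every witness; this is exactly the failure of semicontinuity you flagged, and it persists for the constant/Teichm\"uller family. (Your other step, $\GR_{\mathbb{K}}(T)\le\GR_{\mathbb{L}}(\widetilde T)$, is correct in substance, but bare upper semicontinuity of fibre dimension is not enough: you need that components of the generic fibre of the zero scheme meet the special fibre, which here follows because the defining equations are homogeneous, so one can projectivize and use properness over the DVR.)

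This is why the paper does not lift to characteristic zero at all. The characteristic hypothesis in Theorem~\ref{thm:general} enters only through Theorem~\ref{stability of minstr}, which is what makes Proposition~\ref{small subalgebra general field} and the rational-point statement work over non-closed fields. Over an algebraically closed field one instead invokes Ananyan--Hochster in the form of Theorem~\ref{algebraically closed small subalgebra}, which carries no characteristic restriction, and the points needed in the inductive construction exist over $\mathbb{K}$ itself by the Nullstellensatz (the first case of Proposition~\ref{solution of constructable set}). This yields Lemma~\ref{lem:closed}, the characteristic-free analogue of Lemma~\ref{d-minGR-2} with $\mathbb{L}=\mathbb{K}$, and Theorem~\ref{thm:general1} then follows by combining it with Proposition~\ref{codim for infinite} and Proposition~\ref{computing GR}, as in Section~\ref{sec:thm1.2}. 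To salvage your outline you would need a quantity that both controls $\GR$ and does not drop when passing from the lift to the reduction; the example above shows the subrank is not such a quantity, so the reduction-to-characteristic-zero route cannot be repaired by a cleverer choice of witness.
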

\begin{remark}
Theorem~\ref{thm:general1} addresses the open problem in \cite[Section~9]{kopparty2020geometric} regarding the relation between the (border) subrank and the geometric rank over an algebraically closed field.
\end{remark}

For $d = 3$,  Theorem~\ref{thm:general} can be strengthened to a rather large extend.  Indeed,  the condition on $\ch(\mathbb{K})$ can be removed,  $A(3,s)$ can be taken as a quadratic polynomial,  and $\mathbb{L}$ can be chosen to be $\mathbb{K}$.
\begin{theorem}[$\GR(T)$ vs.  $\Q(T)$ for order three tensors] \label{3-GR<Q infinite}
There are constants $c_1$ and $c_2$ such that for any field $\mathbb{K}$ and any tensor $T \in \mathbb{K}^{n_1} \otimes \mathbb{K}^{n_2} \otimes \mathbb{K}^{n_3}$,  we have 
\[
\GR(T) \le \begin{cases}
2 \Q(T)^2 + 3 \Q(T), \quad &\text{if $\mathbb{K}$ is an infinite field} \\
\frac{2c_2}{c_1} \Q(T)^2 + \left( \frac{2c_2}{c_1} + c_2 \right) \Q(T) +  \left( 2c_2 - \frac{c_2}{c_1} \right), \quad &\text{otherwise} \\
\end{cases}
\]
In particular,  we have $\GR(T) = O(\Q(T)^2) = O(\underline{\Q}(T)^2)$.
\end{theorem}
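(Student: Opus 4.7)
The plan is to reformulate $\GR(T)$ via matrix pencils, reduce to a sub-tensor of size $g \times g \times g$ where $g = \GR(T)$, and then extract subrank triples by a greedy procedure. First I identify $T \in \mathbb{K}^{n_1} \otimes \mathbb{K}^{n_2} \otimes \mathbb{K}^{n_3}$ with the linear family $v \mapsto M_v \in \M_{n_1, n_3}(\mathbb{K})$ obtained by contracting $T$ with $v$ in the middle slot. The fiber-dimension theorem applied to the projection from the bilinear variety to $\mathbb{K}^{n_2}$ then gives $\GR(T) = g = \max_v \rank M_v$. Next I pick $v_0$ realizing this maximum, together with $g$-dimensional subspaces $U_0 \subset \mathbb{K}^{n_1*}$, $W_0 \subset \mathbb{K}^{n_3*}$ on which $M_{v_0}$ restricts to an invertible matrix, and any $g$-dimensional $V_0 \subset \mathbb{K}^{n_2*}$ containing $v_0$. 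The restricted tensor $T'' := T|_{U_0 \otimes V_0 \otimes W_0} \in \mathbb{K}^g \otimes \mathbb{K}^g \otimes \mathbb{K}^g$ then has $\GR(T'') = g$ (its pencil at $v_0$ is already invertible) and $\Q(T'') \le \Q(T)$, so it suffices to prove the bound for cube-shape tensors of full geometric rank.

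\textbf{Greedy extraction and the leading coefficient $2$.} For such a cube-shape $T''$, I build triples $(u_i, v_i, w_i) \in (\mathbb{K}^{g*})^3$ satisfying $T''(u_i, v_j, w_k) = \delta_{ijk}$ one at a time. Suppose $s$ triples are already extracted; the $(s+1)$-th triple $(u, v, w)$ is subject to $3s^2$ linear constraints (in $u$, $v$, $w$ separately), $3s$ bilinear constraints coupling pairs among them, and the trilinear normalization $T''(u, v, w) = 1$. Restricting $u$ and $w$ to subspaces of codimension $s^2$ drops the generic rank of the matrix pencil by at most $2s^2$, since deleting $\le k$ rows or columns drops a matrix's rank by $\le k$. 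Crucially, restricting $v$ to the codimension-$s^2$ subspace $V' \subset \mathbb{K}^{g*}$ should incur no \emph{further} drop in generic rank, provided $V'$ avoids the rank-dropping locus of the pencil; over an infinite field I arrange this non-containment by choosing the already-extracted triples sufficiently generically. A count showing that the $3s$ bilinear conditions and the trilinear normalization can be met whenever the residual generic rank is at least $3s + 1$ then produces the $(s+1)$-th triple. The greedy step thus succeeds whenever $g - 2s^2 \ge 3s + 1$, yielding the sharp bound $g \le 2 \Q(T)^2 + 3 \Q(T)$.

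\textbf{Finite fields and main obstacle.} Over a finite $\mathbb{K}$, $\Q_{\mathbb{K}}(T)$ may be strictly smaller than $\Q_{\overline{\mathbb{K}}}(T)$; a stability estimate of the form $\Q_{\overline{\mathbb{K}}}(T) \le (c_2/c_1)\, \Q_{\mathbb{K}}(T) + c_3$, provable by an order-three refinement of the field-extension arguments behind Theorem~\ref{thm:general}, lets me transfer the infinite-field bound (applied to $T$ over $\overline{\mathbb{K}}$) to the stated inequality with constants $c_1, c_2$. The assertion $\GR(T) = O(\underline{\Q}(T)^2)$ is then immediate from $\Q(T) \le \underline{\Q}(T)$. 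The hard part throughout will be arranging that the constraint-cut subspace $V'$, defined by the specific tensor equations $T''(u_i, v, w_k) = 0$ for $i, k \le s$, avoids the rank-dropping locus of the pencil -- so that the residual generic rank is $\ge g - 2s^2$ rather than the naive $g - 3s^2$. Without this observation one only obtains the weaker bound $g \le 3 \Q(T)^2 + O(\Q(T))$; pinning down the sharp coefficient $2$ requires a careful perturbation argument on the previously-extracted triples, and this is the principal quantitative content of the proof.
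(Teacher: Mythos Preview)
Your very first step is already incorrect: the identity $\GR(T) = \max_v \rank M_v$ is false. The fiber-dimension theorem only gives $\GR(T) \le \max_v \rank M_v$, because the bilinear variety $Z = \{(u,v) : \langle T, u \otimes v\rangle = 0\}$ need not be irreducible; it may have a high-dimensional component lying over the rank-dropping locus of the pencil. A concrete counterexample is the $2\times 2$ matrix multiplication tensor $\langle 2,2,2\rangle \in \mathbb{K}^4 \otimes \mathbb{K}^4 \otimes \mathbb{K}^4$: every slicing direction gives generic (indeed maximal) slice rank $4$, yet $\GR(\langle 2,2,2\rangle) = 3$, since $\{(X,Y) \in M_2 \times M_2 : XY = 0\}$ has a $5$-dimensional component over the rank-$\le 1$ locus of $X$. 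Consequently your reduction to a $g \times g \times g$ sub-tensor $T''$ with $\GR(T'') = g$ does not go through: you only arrange generic slice rank $g$, which is strictly weaker. The greedy extraction in the second paragraph then rests on the wrong invariant --- high \emph{generic} slice rank does not force high subrank (take $T = I_g \otimes e_1$), and the crucial claim that restricting $v$ to the specific subspace $V'$ incurs no further rank drop is asserted but not argued. Finally, your finite-field reduction is circular: the stability estimate $\Q_{\overline{\mathbb{K}}}(T) \le C\,\Q_{\mathbb{K}}(T)$ is Corollary~\ref{cor:Qstability}, which the paper derives \emph{from} Theorem~\ref{3-GR<Q infinite}, not the other way around.

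The paper's proof runs on a different and much shorter track. The key input is Lemma~\ref{3-minGR} (from Bri\"et et al.): if the \emph{minimal} rank over a $c$-dimensional space of slices is at least $2c(c-1)$, then $\Q(T) \ge c$. Taking the contrapositive with $c = \Q(T)+1$, every $(c{+}1)$-dimensional subspace of $(\mathbb{K}^{n_3})^*$ meets the locus $X = \{u : \rank T_u \le 2c(c+1)-1\}$, so Proposition~\ref{codim for infinite} gives $\codim X \le c+1$, and Proposition~\ref{computing GR} then yields $\GR(T) \le (2c(c+1)-1) + (c+1) = 2c^2 + 3c$. For finite fields the paper does \emph{not} pass through $\overline{\mathbb{K}}$ at all; instead it counts $|X|$ via the Grassmannian (Lemma~\ref{counting subspace}) and converts this to a bound on $\GR(T)$ through the analytic rank using Proposition~\ref{codim for finite} and Theorem~\ref{AR stability}. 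If you want to salvage a greedy argument, the right hypothesis is a lower bound on the minimal slice rank over a subspace --- which is exactly what Lemma~\ref{3-minGR} packages --- rather than on the generic slice rank.
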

\begin{remark}
The quadratic upper bound of $\GR(T)$ in Theorem~\ref{3-GR<Q infinite} is optimal.  Indeed,  when $\mathbb{K}$ is algebraically closed,  we have $\Q(T) = \Theta(n^{1/2})$ \cite[Theorem~1.2]{derksen2024subrank} and $\GR(T) = n$ \cite[Lemma~5.3]{kopparty2020geometric} for a generic $T \in \mathbb{K}^{n} \otimes \mathbb{K}^n \otimes \mathbb{K}^n$.  Furthermore,  by \cite[Theorem~2]{biaggi2025border} or \cite[Corollary~2.17]{PSS25},  it generically holds that $\underline{\Q}(T) = \Theta(n^{1/2})$.  Consequently,  for a generic $T$,  we have  
\begin{equation}\label{eq:QvsGR}
\GR(T)  = \Theta ( \Q(T)^2 ),\quad  \GR(T) = \Theta (  \underline{\Q}(T)^2 ).
\end{equation} 
However,  we notice that it is impossible for \eqref{eq:QvsGR} to hold for all $T \in \mathbb{K}^{n} \otimes \mathbb{K}^n \otimes \mathbb{K}^n$.  For instance,  considering the identity tensor $I_n \in \mathbb{K}^{n} \otimes \mathbb{K}^n \otimes \mathbb{K}^n$,  we have $\GR(I_n) =  \Q(I_n)  =  \underline{\Q}(I_n) = n$.  Thus,  Theorem~\ref{3-GR<Q infinite} serves as an optimal generalization of \cite[Theorem~1.2]{derksen2024subrank} and \cite[Theorem~2]{biaggi2025border} to arbitrary order three tensors over any field. 
\end{remark}

\subsection{Applications}
Theorems~\ref{thm:general1} and \ref{3-GR<Q infinite} have several immediate applications,  each of which concerns a quantitative property of the subrank.  As the first application,  we show that the growth rate of the subrank of order three tensors is at most quadratic under field extensions.  We then prove a direct sum estimate and a de-bordering result for the subrank.  Next,  we respectively establish new upper bounds of the partition rank and the analytic rank in terms of the subrank.  In the last application,  we recover a gap theorem of the subrank.  Readers are referred to Definition~\ref{def:rank} for definitions of the partition rank,  the slice rank and the analytic rank.

\subsubsection{Stability of Subrank} 
For any $T \in \mathbb{K}^{n_1} \otimes \mathbb{K}^{n_2} \otimes \mathbb{K}^{n_3}$,  we observe that $\GR_{\overline{\mathbb{K}}} (T) = \GR(T)$.  Here $\GR_{\overline{\mathbb{K}}} (T)$ is the geometric rank of $T$ as a tensor in $\overline{\mathbb{K}}^{n_1} \otimes_{\overline{\mathbb{K}}} \overline{\mathbb{K}}^{n_2} \otimes_{\overline{\mathbb{K}}} \overline{\mathbb{K}}^{n_3}$.  Thus,  Theorem~\ref{3-GR<Q infinite} together with \cite[Theorem~5]{kopparty2020geometric} implies the following. 
\begin{corollary}[Stability of $\Q(T)$]\label{cor:Qstability}
For any field $\mathbb{K}$ and any tensor $T \in \mathbb{K}^{n_1} \otimes \mathbb{K}^{n_2} \otimes \mathbb{K}^{n_3}$,  we have $\Q_{\overline{\mathbb{K}}}( T )= O(\Q(T)^{2})$.
\end{corollary}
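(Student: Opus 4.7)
The plan is essentially a three-step chain of inequalities that combines Theorem~\ref{3-GR<Q infinite} with two facts already available in the literature. First, I would write down the trivial direction $\Q_{\overline{\mathbb{K}}}(T)\le \GR_{\overline{\mathbb{K}}}(T)$, which holds over the algebraically closed field $\overline{\mathbb{K}}$ by the Kopparty--Moshkovitz--Zuiddam inequality $\Q(T)\le \GR(T)$ cited in the introduction (it applies in particular to $T$ viewed as a tensor over $\overline{\mathbb{K}}$).

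Second, I would invoke the base-change invariance of the geometric rank, $\GR_{\overline{\mathbb{K}}}(T)=\GR(T)$, which is the observation recorded in the paragraph just before the corollary. Conceptually, $\GR(T)$ is the codimension of a scheme cut out by polynomials with coefficients in $\mathbb{K}$, and codimension is preserved when one passes to an algebraic field extension; the paper attributes this to \cite[Theorem~5]{kopparty2020geometric}, so I would simply cite that statement rather than reprove it.

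Third, and this is the only nontrivial input, I would apply Theorem~\ref{3-GR<Q infinite} to bound $\GR(T)$ by a quadratic polynomial in $\Q(T)$, yielding $\GR(T)=O(\Q(T)^2)$. Chaining the three steps gives
\[
\Q_{\overline{\mathbb{K}}}(T)\;\le\;\GR_{\overline{\mathbb{K}}}(T)\;=\;\GR(T)\;=\;O\!\left(\Q(T)^{2}\right),
\]
which is exactly the claim. Since both Theorem~\ref{3-GR<Q infinite} and the stability of $\GR$ under field extension are already established, there is no real obstacle here; the only mild caveat is to make sure the constants in the quadratic bound of Theorem~\ref{3-GR<Q infinite} are uniform in $\mathbb{K}$ (in particular, in whether $\mathbb{K}$ is finite or infinite), so that applying the bound to $T$ over the possibly-finite field $\mathbb{K}$ still yields a uniform $O(\Q(T)^2)$ estimate. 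This is immediate from the two-case statement of Theorem~\ref{3-GR<Q infinite} with the worse of the two constants.
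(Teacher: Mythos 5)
Your proposal is correct and is essentially identical to the paper's argument, which likewise chains $\Q_{\overline{\mathbb{K}}}(T)\le\GR_{\overline{\mathbb{K}}}(T)=\GR(T)=O(\Q(T)^2)$ using Theorem~\ref{3-GR<Q infinite}. One tiny bookkeeping point: the citation \cite[Theorem~5]{kopparty2020geometric} is used for the inequality $\Q\le\GR$, while the base-change invariance $\GR_{\overline{\mathbb{K}}}(T)=\GR(T)$ is a separate observation (it follows from invariance of height under passage to $\overline{\mathbb{K}}$, as in Lemma~\ref{simple observation}), exactly as your conceptual justification indicates.
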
 
\begin{remark}
In fact,  for $\mathbb{K} = \mathbb{R}$,  the proof of Corollary~\ref{cor:Qstability} yields $Q_{\mathbb{C}} (T) \le 2 \Q(T)^2 + 3 \Q(T)$.  On the other hand,  as shown in \cite[Theorem~1.5]{biaggi2025real},  a tighter bound $Q_{\mathbb{C}} (T) \le \Q(T)^2$ holds.   However,  when considering only the order of the upper bound,  Corollary~\ref{cor:Qstability} generalizes \cite[Theorem~1.5]{biaggi2025real} to arbitrary fields.   
\end{remark}


\subsubsection{Subrank of direct sum}
Given two order-$d$ tensors $S$ and $T$,  we have 
\[
\Q(S\oplus T) \le \GR(S\oplus T)= \GR(S)+\GR(T).
\] 
This together with Theorems~\ref{thm:general1} and \ref{3-GR<Q infinite} provides an upper bound for $\Q(S\oplus T)$.
\begin{corollary}[Subrank of direct sum]\label{direct sum} 
There is a function $C$ on $\mathbb{N}\times\mathbb{N}$ such that for any algebraically closed field $\mathbb{K}$ and tensors $T\in\mathbb{K}^{n_{1}}\otimes\cdots\otimes\mathbb{K}^{n_{d}}$ and $S\in \mathbb{K}^{m_{1}}\otimes\cdots\otimes\mathbb{K}^{m_{d}}$,  we have $\Q(S\oplus T)\le C(d, \Q(S))+C(d,\Q(T))$. Moreover,  we may take $C(3,r) = c r^2$ for some constant $c$. 
\end{corollary}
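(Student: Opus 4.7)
The plan is to follow exactly the inequality chain that the authors themselves record just before the statement:
\[
\Q(S\oplus T) \;\le\; \GR(S\oplus T) \;=\; \GR(S)+\GR(T).
\]
The first inequality is the Kopparty--Moshkovitz--Zuiddam observation $\Q(\cdot)\le \GR(\cdot)$, valid over any field. The equality is the standard additivity of geometric rank under direct sum, which is a basic property of $\GR$ (the zero loci of $S$ and $T$, viewed inside disjoint products of projective spaces, contribute independently to the codimension computation defining $\GR$). Thus in one stroke the subrank of the direct sum is controlled by the geometric ranks of the summands, and the task reduces to bounding each $\GR$ by the corresponding $\Q$.

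Here I would invoke Theorem~\ref{thm:general1}: over an algebraically closed field there is a function $C$ on $\mathbb{N}\times\mathbb{N}$ with $\GR(U)\le C(d,\Q(U))$ for every order-$d$ tensor $U$. Applying this to both $S$ and $T$ and substituting into the chain above yields
\[
\Q(S\oplus T)\;\le\; \GR(S)+\GR(T)\;\le\; C(d,\Q(S))+C(d,\Q(T)),
\]
which is exactly the first claim of the corollary.

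For the refined statement $C(3,r)=cr^2$, I would replace Theorem~\ref{thm:general1} by Theorem~\ref{3-GR<Q infinite} in the order-three case. Since an algebraically closed field is infinite, that theorem gives $\GR(U)\le 2\Q(U)^2+3\Q(U)$, and for $r\ge 0$ one has $2r^2+3r\le 5r^2+1$, so choosing any constant $c\ge 5$ absorbs the linear term (the small-$r$ values $r=0,1$ are handled by a trivial adjustment of $c$). Hence $C(3,r)=cr^2$ is admissible, completing the order-three refinement.

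There is no real obstacle; the only thing that must be invoked silently is the additivity $\GR(S\oplus T)=\GR(S)+\GR(T)$, which is a known structural property of the geometric rank and is already stated without proof in the passage preceding the corollary, so no further work is needed on that point.
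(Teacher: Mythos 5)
Your proposal is correct and follows exactly the paper's own argument: the chain $\Q(S\oplus T)\le\GR(S\oplus T)=\GR(S)+\GR(T)$ combined with Theorem~\ref{thm:general1} for general $d$, and Theorem~\ref{3-GR<Q infinite} (with the linear term absorbed into the constant) for the quadratic bound when $d=3$. Nothing further is needed.
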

\begin{remark}
It is obvious that $\Q(S)+ \Q(T)\le \Q(S\oplus T)$. Only very recently,  the inequality was shown to be strict \cite[Theorem~5.2]{derksen2024subrank}, proving that the subrank is not additive under the direct sum.  Moreover,  the pair $(S,T)$ of order three tensors constructed in \cite[Theorem~5.1]{derksen2024subrank} satisfies 
\begin{equation}\label{eq:additive}
\Q(S \oplus T) \gtrsim (\Q(S) + \Q(T))^2.
\end{equation}
Based on this observation,  an open problem is proposed in  \cite[Section~6]{derksen2024subrank} asking whether the quadratic gap \eqref{eq:additive} between $\Q(S \oplus T)$ and $\Q(S) + \Q(T)$ is the largest possible.  Corollary~\ref{direct sum} confirms the optimality of this gap.   
\end{remark}

\subsubsection{De-bordering subrank}
We recall from \cite[Theorem~5]{kopparty2020geometric} that $\Q(T) \le \underline{\Q}(T) \le \GR(T)$ for any tensor $T$ over an algebraically closed field.  Thus,  Theorems~\ref{thm:general1} and \ref{3-GR<Q infinite} lead to the following bounds of $\underline{\Q}(T)$ in terms of $\Q(T)$.
\begin{corollary}[De-bordering of $\underline{\Q}(T)$]\label{cor:deborder}
There is a function $C$ on $\mathbb{N} \times \mathbb{N}$ such that for any algebraically closed field $\mathbb{K}$ and any tensor $T\in\mathbb{K}^{n_1}  \otimes \cdots \otimes \mathbb{K}^{n_d}$,  we have 
\[
\Q(T)\le \underline{\Q}(T) \le C(d,Q(T)).
\]
Moreover,  we may take $C(3,  r) = c r^2$ for some constant $c$. 
\end{corollary}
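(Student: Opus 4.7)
The plan is to obtain both inequalities by assembling results already recorded in the excerpt, so essentially no new work is needed beyond chaining the bounds. The only tensorial facts I will rely on are the standard comparison $\Q(T) \le \underline{\Q}(T)$ (immediate from the definitions, since approximation includes the exact case) and the inequality $\underline{\Q}(T) \le \GR(T)$ over an algebraically closed field, which is \cite[Theorem~5]{kopparty2020geometric}. Together these give the key chain
\[
\Q(T) \;\le\; \underline{\Q}(T) \;\le\; \GR(T),
\]
valid for every tensor $T$ over the algebraically closed field $\mathbb{K}$.

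The first step is to dispatch the left inequality $\Q(T) \le \underline{\Q}(T)$ by quoting the definition; nothing more is required. For the right inequality in the general order-$d$ case, I apply Theorem~\ref{thm:general1}: there is a function $C\colon\mathbb{N}\times\mathbb{N}\to\mathbb{N}$ with $\GR(T) \le C(d,\Q(T))$ for every tensor $T$ over any algebraically closed field. Composing this with the above chain yields $\underline{\Q}(T) \le C(d,\Q(T))$, which is the first assertion of the corollary.

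For the quadratic refinement in the order-three case, I invoke Theorem~\ref{3-GR<Q infinite}. Since $\mathbb{K}$ is algebraically closed it is in particular infinite, so the theorem delivers $\GR(T) \le 2\Q(T)^2 + 3\Q(T)$. Combined with $\underline{\Q}(T) \le \GR(T)$, this gives $\underline{\Q}(T) \le 2\Q(T)^2 + 3\Q(T)$, which can be absorbed into a single quadratic bound $cr^2$ by adjusting the constant (e.g.\ valid for all $r \ge 1$, and extended to $r=0$ by noting that $\Q(T)=0$ forces $T=0$ and hence $\underline{\Q}(T)=0$). Setting $C(3,r) := cr^2$ with such a $c$ completes the proof.

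There is no serious obstacle here; the only point to be careful about is that the hypothesis ``algebraically closed'' is genuinely needed for the inequality $\underline{\Q}(T)\le\GR(T)$ (the border subrank is defined via approximation over $\mathbb{K}$, and this comparison uses a Zariski-closedness argument that requires the ground field to be algebraically closed). Every other ingredient is a direct quotation of a result stated in the excerpt, and the corollary falls out by substitution.
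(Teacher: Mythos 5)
Your proposal is correct and follows exactly the paper's own route: the chain $\Q(T)\le\underline{\Q}(T)\le\GR(T)$ from \cite[Theorem~5]{kopparty2020geometric}, combined with Theorem~\ref{thm:general1} for general $d$ and Theorem~\ref{3-GR<Q infinite} (infinite-field case) for $d=3$. Nothing further is needed.
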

In algebraic complexity theory,  bounding a border complexity in terms of a non-border complexity is called a \emph{de-bordering} \cite{LL89,Kumar20, BDI21}.  Examples include the de-bordering of the Waring rank \cite{DGIJL24,PFCGV26} and de-bordering of the strength and the partition rank \cite{BDE19, bik2023uniformity,bik2025strength}.  In this context, Corollary~\ref{cor:deborder} can be understood as the de-bordering of the subrank.

\subsubsection{Subrank vs.  partition rank } 
By \cite{cohen2023partition},  we have $\GR(T) \asymp \PR(T)$ over any algebraically closed field. As a consequence,  we derive the following corollary of Theorems~\ref{thm:general1} and \ref{3-GR<Q infinite}.
\begin{corollary}[$\PR(T)$ vs.  $\Q(T)$]\label{cor:QvsSR}
There is a constant $c$ and a function $C_1$ on $\mathbb{N} \times \mathbb{N}$ such that for any field
 $\mathbb{K}$ and tensor $T \in \mathbb{K}^{n_1} \otimes \cdots \otimes \mathbb{K}^{n_d}$,  we have 
 \[
 \PR(T) 
\le \begin{cases} 
C_1(d,  \Q(T)),\quad &\text{if $\mathbb{K}$ is algebraically closed} \\ 
c \Q(T)^2,\quad &\text{if $d = 3$}
\end{cases}
 \]
In particular,  for any $T \in \mathbb{K}^{n_1} \otimes \mathbb{K}^{n_2} \otimes \mathbb{K}^{n_3}$,  we have $\SR(T) = O(\Q(T)^2 )$ and 
\begin{equation}\label{cor:QvsSR:eq1}
\liminf_{k \to \infty} \left[ \SR (T^{\boxtimes k})  \right]^{1/k} \le \limsup_{k \to \infty} \left[ \SR (T^{\boxtimes k})  \right]^{1/k}  \le \undertilde{\Q} (T) ^2 .
\end{equation}
where $\undertilde{\Q} (T) \coloneqq \lim_{k \to \infty} \left( \Q (T^{\boxtimes k})  \right)^{1/k}$ and $T^{\boxtimes k}$ is the tensor in $\mathbb{K}^{n_1^k} \otimes \cdots \otimes \mathbb{K}^{n_d^k}$ obtained by the Kronecker product of $k$ copies of $T$.
\end{corollary}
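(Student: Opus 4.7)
The strategy is to plug Theorems~\ref{thm:general1} and \ref{3-GR<Q infinite} into the linear equivalence $\PR(T)\asymp\GR(T)$ over algebraically closed fields established in \cite{cohen2023partition}, and then to read off the slice-rank and asymptotic statements as formal consequences.

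For the algebraically closed case, let $c_{d}$ denote the constant appearing in the inequality $\PR(T)\le c_{d}\GR(T)$ from \cite{cohen2023partition}. Composing with Theorem~\ref{thm:general1} and setting $C_{1}(d,s):=c_{d}\cdot C(d,s)$ gives $\PR(T)\le C_{1}(d,\Q(T))$ at once.

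For $d=3$ over an arbitrary field $\mathbb{K}$, I would start from Theorem~\ref{3-GR<Q infinite}, which supplies $\GR(T)=O(\Q(T)^{2})$, and invoke the identity $\PR(T)=\SR(T)$ for order three tensors (only the partitions $\{i\},\{j,k\}$ are nontrivial). The task reduces to bounding the order three slice rank by the geometric rank with a constant factor that is independent of the base field. I would establish this by passing to the algebraic closure, where $\SR_{\overline{\mathbb{K}}}(T)\le c\,\GR_{\overline{\mathbb{K}}}(T)=c\,\GR_{\mathbb{K}}(T)$ by \cite{cohen2023partition} and the invariance of $\GR$ under algebraic closure, and then descending to $\mathbb{K}$ via a Galois-symmetrisation argument applied to a minimal slice rank decomposition; the size of the relevant Galois orbit can be controlled because one can arrange the decomposition to live over a finite extension of $\mathbb{K}$ whose degree is bounded in terms of $\GR(T)$. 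This field-descent step, converting $\PR_{\overline{\mathbb{K}}}(T)=O(\Q(T)^{2})$ into $\PR_{\mathbb{K}}(T)=O(\Q(T)^{2})$, is the principal obstacle; everything else is bookkeeping.

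Once the $d=3$ bound $\PR(T)=O(\Q(T)^{2})$ is in hand, the ``in particular'' conclusions follow formally. The identity $\PR=\SR$ for $d=3$ turns it into $\SR(T)=O(\Q(T)^{2})$. Applying this estimate to the Kronecker power $T^{\boxtimes k}\in \mathbb{K}^{n_{1}^{k}}\otimes \mathbb{K}^{n_{2}^{k}}\otimes \mathbb{K}^{n_{3}^{k}}$, which is again an order three tensor over $\mathbb{K}$, and taking $k$-th roots yields $[\SR(T^{\boxtimes k})]^{1/k}\le c^{1/k}[\Q(T^{\boxtimes k})]^{2/k}$. Since $c^{1/k}\to 1$ and $[\Q(T^{\boxtimes k})]^{2/k}\to\undertilde{\Q}(T)^{2}$ as $k\to\infty$ by the definition of the asymptotic subrank, the $\limsup$ bound in \eqref{cor:QvsSR:eq1} follows, while the $\liminf\le\limsup$ inequality is automatic.
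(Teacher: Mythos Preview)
Your treatment of the algebraically closed case and of the asymptotic consequences in \eqref{cor:QvsSR:eq1} is correct and is exactly what the paper does: compose the linear bound $\PR\le c_d\GR$ from \cite{cohen2023partition} with Theorem~\ref{thm:general1} (respectively Theorem~\ref{3-GR<Q infinite}), and then take $k$-th roots of $\SR(T^{\boxtimes k})\le c\,\Q(T^{\boxtimes k})^2$.

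The genuine gap is in your descent step for $d=3$ over an arbitrary field. Your proposed ``Galois-symmetrisation'' does not yield the claimed bound. Even granting the unsupported assertion that a minimal slice-rank decomposition over $\overline{\mathbb{K}}$ can be realised over a finite extension $\mathbb{L}/\mathbb{K}$ of degree bounded in terms of $\GR(T)$, summing over the Galois orbit multiplies the number of terms by $[\mathbb{L}:\mathbb{K}]$. You would obtain at best
\[
\SR_{\mathbb{K}}(T)\ \le\ [\mathbb{L}:\mathbb{K}]\cdot \SR_{\overline{\mathbb{K}}}(T)\ \le\ g(\GR(T))\cdot \GR(T)
\]
for some increasing function $g$, not a linear bound $\SR_{\mathbb{K}}(T)\le c\,\GR(T)$ with a universal constant. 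Feeding in Theorem~\ref{3-GR<Q infinite} then gives $\SR(T)\le g(\Q(T)^2)\cdot \Q(T)^2$, which is strictly worse than the asserted $O(\Q(T)^2)$. So the mechanism you describe cannot close the gap you correctly identify.

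The paper does not attempt any such descent: it simply invokes \cite{cohen2023partition} for the inequality $\PR\le c_d\GR$ and combines it with Theorems~\ref{thm:general1} and \ref{3-GR<Q infinite}. The preamble states this equivalence over algebraically closed fields only, so the paper's own justification of the $d=3$ case for arbitrary $\mathbb{K}$ is no more detailed than yours; the difference is that the paper appeals directly to what \cite{cohen2023partition} provides rather than attempting a Galois argument. If you want to match the paper, drop the symmetrisation idea and cite \cite{cohen2023partition} for $\PR\le c_d\GR$; if you want to go further, you would need an independent proof that this inequality holds over every field for $d=3$, which your sketch does not supply.
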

Given $T \in \mathbb{K}^{n_1} \otimes \mathbb{K}^{n_2} \otimes \mathbb{K}^{n_3}$,  the improved estimate $\limsup_{k \to \infty} \left[ \SR (T^{\boxtimes k})  \right]^{1/k} \le \undertilde{\Q}(T)^{3/2}$ is established in \cite[Theorem~12]{briet2023discreteness} by observing $\underline{\Q}(T^{\boxtimes 3}) = \Omega (\SR(T)^2) $.   However,  the proof of \cite[Theorem~12]{briet2023discreteness} is insufficient to conclude that $\SR(T) = O(\Q(T)^{3/2})$.  Actually,  this is impossible since by \eqref{eq:QvsGR},  it holds generically that $\SR(T ) \asymp \GR(T) = \Theta ( \Q(T)^2 )$.  To the best of our knowledge,  the upper bound $\SR(T) = O(\Q(T)^2)$ in Corollary~\ref{cor:QvsSR} is obtained for the first time.  

\subsubsection{Subrank vs.  analytic rank} 
By \cite[Theorem~3]{moshkovitz2024uniform} and \cite[Proposition~4.4]{chen2024stability},  we have $\GR(T) \asymp  \AR (T)$ for any tensor $T \in \mathbb{F}_q^{n_1} \otimes \cdots \otimes \mathbb{F}_q^{n_d}$. The following corollary is an immediate consequence of Theorem~\ref{3-GR<Q infinite}.
\begin{corollary}[$\AR(T)$ vs.  $\Q(T)$]\label{cor:QvsAR}
For any prime power $q$ and any tensor $T\in \mathbb{F}_q^{n_1} \otimes \mathbb{F}_q^{n_1}  \otimes  \mathbb{F}_q^{n_3}$,  we have $\AR(T) = O(\Q(T)^2)$.
\end{corollary}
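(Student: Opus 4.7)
The plan is to chain two known inputs: Theorem~\ref{3-GR<Q infinite} (quadratic bound of $\GR$ in terms of $\Q$ for order three tensors over any field) with the linear equivalence $\AR(T) \asymp \GR(T)$ established over finite fields by \cite{moshkovitz2024uniform,chen2024stability}, as recalled in the sentence immediately preceding the corollary.

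First, since $\mathbb{F}_q$ is a field, Theorem~\ref{3-GR<Q infinite} applies to $T \in \mathbb{F}_q^{n_1} \otimes \mathbb{F}_q^{n_2} \otimes \mathbb{F}_q^{n_3}$ (note that $\mathbb{F}_q$ is finite, so the second case in the statement of Theorem~\ref{3-GR<Q infinite} is the relevant one) and yields the existence of an absolute constant $c_0$, depending only on the universal constants $c_1, c_2$ appearing in that theorem, such that
\[
\GR(T) \le c_0 \bigl( \Q(T)^2 + \Q(T) + 1 \bigr).
\]
Second, by \cite[Theorem~3]{moshkovitz2024uniform} combined with \cite[Proposition~4.4]{chen2024stability}, there exists a constant $\kappa = \kappa(d)$ (here $d = 3$) such that $\AR(T) \le \kappa \cdot \GR(T)$ for every $T \in \mathbb{F}_q^{n_1} \otimes \mathbb{F}_q^{n_2} \otimes \mathbb{F}_q^{n_3}$. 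Composing the two inequalities gives
\[
\AR(T) \le \kappa \cdot \GR(T) \le \kappa c_0 \bigl( \Q(T)^2 + \Q(T) + 1 \bigr) = O(\Q(T)^2),
\]
which is the desired bound.

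There is no real obstacle in this argument: the proof is essentially a one-line composition of the two already-established bounds, and all the nontrivial work has been carried out either in Theorem~\ref{3-GR<Q infinite} (for the $\GR$-versus-$\Q$ side) or in the cited papers (for the $\AR$-versus-$\GR$ side). The only point worth highlighting is that, since $T$ is defined over the finite field $\mathbb{F}_q$, the absolute constant obtained from Theorem~\ref{3-GR<Q infinite} is independent of $q$ and of the format $(n_1, n_2, n_3)$, so the resulting $O(\Q(T)^2)$ bound is genuinely uniform in all of these parameters, matching the statement of the corollary.
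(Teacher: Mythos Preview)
Your proof is correct and follows the same approach as the paper: it combines Theorem~\ref{3-GR<Q infinite} with the linear equivalence $\AR(T)\asymp\GR(T)$ over finite fields (from \cite{moshkovitz2024uniform,chen2024stability}) to obtain $\AR(T)=O(\Q(T)^2)$. The paper states the corollary as an immediate consequence of these two inputs, exactly as you have written it out.
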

For the lower bound of $\AR(T)$,  we have $\AR(T) = \Omega( \Q(T))$ by \cite[Theorem~5]{kopparty2020geometric}.  As far as we aware,  the quadratic upper bound of $\AR(T)$ in Corollary~\ref{cor:QvsAR} is established for the first time.   

\subsubsection{Gap of subrank} 
By \cite[Theorem~1]{CD21},  given a tensor $T \in \mathbb{K}^{n_1} \otimes \mathbb{K}^{n_2} \otimes \mathbb{K}^{n_3}$,  we either have $\SR( T^{\boxtimes k}) = 1$ for any $k \in \mathbb{N}$ or $\SR( T^{\boxtimes k}) \ge c^{k + o(k)}$ for any $k$ where $c \coloneqq (3/2)^{2/3} \approx 1.89$.  By Corollary~\ref{cor:QvsSR},  we obtain the following gap result for the subrank.
\begin{corollary}[Gap of $\Q(T)$]\label{cor:Qgap}
For any algebraically closed field $\mathbb{K}$ and any tensor $T \in \mathbb{K}^{n_1} \otimes \mathbb{K}^{n_2} \otimes \mathbb{K}^{n_3}$,  one of the following holds: 
\begin{enumerate}[label = (\alph*)]
\item $\Q(T^{\boxtimes k} ) = 1$ for all $k \in \mathbb{N}$. 
\item $\Q(T^{\boxtimes k} ) \ge c^{k/2 + o(k)}$ for all $k \in \mathbb{N}$.
\end{enumerate}
\end{corollary}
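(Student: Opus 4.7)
The plan is to use the slice rank gap theorem \cite[Theorem~1]{CD21} as a black box and transfer it to the subrank via the two-sided comparison
\[
\Q(T) \le \SR(T) \le A\cdot \Q(T)^2,
\]
which is valid for every order three tensor $T$. The left inequality is standard: restricting $T$ to the identity tensor $I_r$ forces $\SR(I_r) = r \le \SR(T)$. The right inequality is precisely Corollary~\ref{cor:QvsSR}, where the constant $A$ is absolute, independent of $T$ and of its dimensions.

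Assume $T \neq 0$, so that $\Q(T^{\boxtimes k}) \ge 1$ for every $k \in \mathbb{N}$. Apply the dichotomy of \cite[Theorem~1]{CD21} to $T$. In the first alternative, $\SR(T^{\boxtimes k}) = 1$ for every $k$; combining this with $\Q \le \SR$ forces $\Q(T^{\boxtimes k}) = 1$, which is case (a). In the second alternative, $\SR(T^{\boxtimes k}) \ge c^{k + o(k)}$ for every $k$, with $c = (3/2)^{2/3}$. Combining this with the inequality $\SR(T^{\boxtimes k}) \le A \cdot \Q(T^{\boxtimes k})^2$ from Corollary~\ref{cor:QvsSR} yields
\[
\Q(T^{\boxtimes k}) \ge \sqrt{\SR(T^{\boxtimes k})/A} \ge c^{k/2 + o(k)},
\]
where the constant factor $1/\sqrt{A}$ is harmlessly absorbed into the $o(k)$ term in the exponent, since $\log(1/\sqrt{A})$ is a constant. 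This is case (b).

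There is no genuine obstacle here; the corollary is essentially an immediate consequence of Corollary~\ref{cor:QvsSR}, which supplies precisely the missing direction needed to convert the slice rank gap into a subrank gap. The only point demanding care is that the constant $A$ hidden in $\SR = O(\Q^2)$ be universal across all order three tensors, rather than depending on $T$; this is indeed how Corollary~\ref{cor:QvsSR} is formulated, so that taking a square root preserves the exponential growth rate and the constant disappears into $o(k)$ when we pass to the $k$-th root.
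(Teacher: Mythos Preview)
Your proposal is correct and follows essentially the same approach as the paper: the paper likewise derives the corollary directly from the slice rank gap theorem \cite[Theorem~1]{CD21} combined with the bound $\SR(T)=O(\Q(T)^2)$ from Corollary~\ref{cor:QvsSR}, with the constant absorbed into the $o(k)$ term.
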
 
While a stronger gap theorem \cite[Theorem~1.6]{CGZ23} can be derived via advanced geometric techniques,  by contrast,  Corollary~\ref{cor:Qgap} directly follows from existing results. 

\subsection{Organization of the paper}
For ease of reference,  we collect basic definitions and related facts in Section~\ref{sec:prelim}.  Then we prepare some technical results for polynomial ideals and the geometric rank respectively in Sections~\ref{sec:poly} and \ref{sec:geo},  which are of great importance in the proofs of the main theorems.  Main results of Section~\ref{sec:poly} are about the existence of special sequences in a polynomial ideal,  and the existence of rational points in a (quasi-)affine variety.  In Section~\ref{sec:geo},  we establish various equalities and inequalities for the geometric rank.  Lastly,  we provide complete proofs for Theorems~\ref{thm:general}, \ref{thm:general1} and \ref{3-GR<Q infinite} in Sections~\ref{sec:1.1},  \ref{sec:thm1.2} and \ref{sec:thm1.4}.

\section{Notations and preliminaries}\label{sec:prelim}
In this section,  we provide an overview of basic definitions and fundamental results in multilinear algebra,  commutative algebra,  and algebraic geometry.  
\subsection{Multilinear algebra}
Let $\mathbb{K}$ be a field.  Given $T \in \mathbb{K}^{n_1} \otimes \cdots \otimes \mathbb{K}^{n_d}$ and $S \in (\mathbb{K}^{n_{i_1}})^\ast \otimes \cdots \otimes (\mathbb{K}^{n_{i_s}})^\ast$ where $s \le d$ and $1 \le i_1 < \cdots < i_s \le d$,  we denote by $\langle T,  S \rangle$ the tensor in $\mathbb{K}^{n_{j_1}} \otimes \cdots \otimes \mathbb{K}^{n_{j_{d-s}}}$ obtained by contracting $T$ with $S$.  Here $\{  j_1 < \cdots < j_{d-s} \} = [n] \setminus \{i_1,\dots,  i_s\}$.  We denote by $\Hom ( (\mathbb{K}^{n_1})^\ast \times \cdots \times (\mathbb{K}^{n_d})^\ast,  \mathbb{K})$ the space of $\mathbb{K}$-multilinear functions on $(\mathbb{K}^{n_1})^\ast \times \cdots \times (\mathbb{K}^{n_d})^\ast$.  Then there is a canonical isomorphism identifying tensors with multilinear functions:
\begin{equation}\label{eq:iso}
\mathbb{K}^{n_1} \otimes \cdots \otimes \mathbb{K}^{n_d} \simeq \Hom ( (\mathbb{K}^{n_1})^\ast \times \cdots \times (\mathbb{K}^{n_d})^\ast,  \mathbb{K}),\quad T \mapsto f_T
\end{equation}
where $f_T(S) \coloneqq \langle T,  S \rangle$ for any $S \in (\mathbb{K}^{n_1})^\ast \otimes \cdots \otimes (\mathbb{K}^{n_d})^\ast$. 

Given $T \in \mathbb{K}^{n_1} \otimes \cdots \otimes \mathbb{K}^{n_d}$ and $S \in \mathbb{K}^{m_1} \otimes \cdots \otimes \mathbb{K}^{m_d}$ with $m_j \le n_j$ for each $j \in [d]$,  we say $S$ is a \emph{restriction} of $T$,  denoted by $S \le T$, if there are linear maps $g_j \in \Hom(\mathbb{K}^{n_j},  \mathbb{K}^{m_j})$ such that $S = (g_1 \otimes \cdots \otimes g_d) (T)$,  where $g_1 \otimes \cdots \otimes g_d$ is the linear map from $ \mathbb{K}^{n_1} \otimes \cdots \otimes \mathbb{K}^{n_d}$ to $\mathbb{K}^{m_1} \otimes \cdots \otimes \mathbb{K}^{m_d}$ induced by $g_1,\dots,  g_d$.

\begin{definition}[Tensor ranks]\label{def:rank}
Let $\mathbb{K}$ be a field and let $T \in \mathbb{K}^{n_1}\otimes \cdots \otimes \mathbb{K}^{n_d}$ be a tensor.  We define 
\begin{itemize}
\item Subrank of $T$:
\[
\Q (T)  \coloneqq \max\{r\in\mathbb{N}:I_{r}\le T\},
\]
where $I_{r} \in (\mathbb{F}^{r})^{\otimes d}$ is the identity tensor. 
\item Border subrank of $T$ for algebraically closed $\mathbb{K}$:
\[
\underline{\Q}(T) \coloneqq \max\{r \in \mathbb{N}:  I_r \in \overline{ G \cdot T } \},
\] 
where $G \coloneqq \GL_{n_1} (\mathbb{K}) \times \cdots \times \GL_{n_d} (\mathbb{K})$,  $G \cdot T$ denotes the orbit of $T$ under the action of $G$,  and $\overline{ G \cdot T }$ is the Zariski closure of $G \cdot T$.
\item Partition rank of $T$: 
\[
\PR(T) \coloneqq \min \left\lbrace
\sum_{J \subsetneq [d]} \rank(T_J): T = \sum_{J \subsetneq [d]} T_J,  \; T_J \in \mathbb{K}^{n_J \times n_{J^c} }
\right\rbrace.
\]
Here for each $J \subsetneq [d]$,  $\mathbb{K}^{n_J \times n_{J^c}}$ is the space of $\left(\prod_{j\in J} n_j\right) \times \left(\prod_{j\in J^c} n_j\right)$ matrices,  and in the equation $T = \sum_{J \subsetneq [d]} T_J$,  $T_J$ is regarded as a tensor in $\mathbb{K}^{n_1}\otimes \cdots \otimes \mathbb{K}^{n_d}$ via the isomorphism $\mathbb{K}^{n_J \times n_{J^c}} \simeq \mathbb{K}^{n_1}\otimes \cdots \otimes \mathbb{K}^{n_d}$.
\item Slice rank of $T$:
\[
\SR(T) \coloneqq \min \left\lbrace
\sum_{j=1}^d \rank(T_j): T = \sum_{j= 1}^d T_j,  \; T_j \in \mathbb{K}^{n_j \times \left( \prod_{i \ne j}n_i \right) }
\right\rbrace.
\]
\item Analytic rank of $T$ for $\mathbb{K} = \mathbb{F}_q$: 
\[
\AR(T) \coloneqq - \log_q \bias(T),
\] 
where 
\begin{equation}\label{eq:bias}
\bias(T) \coloneqq \Prob_{u_{i} \in \left( \mathbb{F}_{q}^{n_{i}} \right)^\ast,\;2 \le i\le d} \left( \left\langle T,   u_{2} \otimes \cdots \otimes u_{d} \right\rangle  =0 \right).
\end{equation}
\item Geometric rank of $T$:
\[
\GR(T) \coloneqq \codim\{(v_{1},\cdots,v_{d-1})\in(\mathbb{K}^{n_1})^{*}\times \cdots \times (\mathbb{K}^{n_d})^{*}: \langle T,  v_{1}\otimes \cdots \otimes v_{d-1} \rangle =0\}.
\]
\end{itemize} 
\end{definition}

\begin{lemma}\cite[Lemma~3.3]{briet2023discreteness}\label{3-minGR} 
Let $T \in \mathbb{K}^{n_{1}}\otimes\mathbb{K}^{n_{2}}\otimes\mathbb{K}^{n_{3}}$ and let $v_1,\dots,  v_c \in (\mathbb{K}^{n_3})^\ast$.  Denote $T_i \coloneqq \langle T,  v_i \rangle$,  $ i \in [c]$.  If $T_1,\dots,  T_c$ are linearly independent and $\rank (S) \ge 2c(c-1)$ for each $S \in \spa_{\mathbb{K}}( T_1,\cdots,T_{c})\setminus \{0\}$,  then $\Q(T)\ge c$. 
\end{lemma}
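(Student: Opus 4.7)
To prove $\Q(T)\ge c$, I will exhibit a restriction $I_c\le T$: linearly independent tuples $x_i\in(\mathbb{K}^{n_1})^\ast$, $y_i\in(\mathbb{K}^{n_2})^\ast$, $z_i\in(\mathbb{K}^{n_3})^\ast$ satisfying $T(x_i,y_j,z_k)=\delta_{ij}\delta_{jk}$. Searching for the $z_k$'s inside $\spa_{\mathbb{K}}(v_1,\ldots,v_c)$ by writing $z_k=\sum_l A_{kl}v_l$ for some $A\in\mathbb{K}^{c\times c}$, the defining equation becomes $A\,M_{ij}=\delta_{ij}e_j$, where $M_{ij}:=(T_l(x_i,y_j))_{l=1}^c\in\mathbb{K}^c$. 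When $A$ is invertible this is equivalent to finding pairs $(x_i,y_i)_{i=1}^c$ such that (i) $T_l(x_i,y_j)=0$ for all $l\in[c]$ and $i\ne j$, and (ii) the diagonal vectors $M_{11},\ldots,M_{cc}$ are linearly independent in $\mathbb{K}^c$; one then takes $A^{-1}$ to be the matrix whose columns are the $M_{ii}$'s.

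The pairs are built greedily. Assuming $(x_j,y_j)_{j<i}$ have been chosen, condition (i) confines $x_i$ to a subspace $U_i\subseteq(\mathbb{K}^{n_1})^\ast$ of codimension at most $c(i-1)$, and analogously $y_i$ to $V_i\subseteq(\mathbb{K}^{n_2})^\ast$. One must locate $(x_i,y_i)\in U_i\times V_i$ with $M_{ii}\notin W_{i-1}:=\spa(M_{11},\ldots,M_{i-1,i-1})$. If no such pair exists, then for some nonzero $\alpha\in W_{i-1}^\perp\subseteq\mathbb{K}^c$ the bilinear form $T_\alpha=\sum_l\alpha_l T_l$ vanishes on all of $U_i\times V_i$. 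Viewing $T_\alpha$ as a linear map $(\mathbb{K}^{n_1})^\ast\to\mathbb{K}^{n_2}$ and using $T_\alpha(U_i)\subseteq V_i^\perp$, a standard rank--nullity bookkeeping yields
\[
\rank(T_\alpha)\le (n_1-\dim U_i)+(n_2-\dim V_i)\le 2c(i-1),
\]
which for every $i<c$ strictly contradicts the hypothesis $\rank(T_\alpha)\ge 2c(c-1)$.

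The main obstacle is the final step $i=c$, where the estimate merely yields $\rank(T_\alpha)\le 2c(c-1)$ and saturates (rather than contradicts) the rank hypothesis. To close this gap, I plan to exploit the rigidity of the equality case --- saturation forces $\ker T_\alpha\subseteq U_c$, $T_\alpha(U_c)=V_c^\perp$, and symmetric identities on the other mode --- so that the degenerate configuration imposes very stringent constraints on the second-mode contractions of the $T_l$'s against $y_1,\ldots,y_{c-1}$, and on the analogous quantities in $\mathbb{K}^{n_2}$. The remedy is to arrange the earlier choices $(x_j,y_j)_{j<c}$ to be in ``sufficiently general position'' that this degenerate configuration is precluded; the rank hypothesis, applied uniformly to every nonzero element of $\spa(T_1,\ldots,T_c)$, supplies the needed slack via a Zariski-density argument over infinite fields (with an explicit counting over finite ones). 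Once all pairs are constructed, $A^{-1}$ is invertible by (ii), the $z_k$'s are automatically linearly independent, and linear independence of the $x_i$'s and $y_i$'s follows from the nondegeneracy of the $M_{ii}$'s; assembling these data yields the required restriction $I_c\le T$, hence $\Q(T)\ge c$.
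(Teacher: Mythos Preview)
The paper does not prove this lemma---it is quoted verbatim from \cite{briet2023discreteness}---so there is no in-paper argument to compare against.

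Evaluated on its own, your greedy construction is the natural approach and is fully correct for $i<c$: the estimate $\rank T_\alpha\le 2c(i-1)<2c(c-1)$ gives a genuine contradiction there. The difficulty is the final step $i=c$, which you yourself flag. At that step the bound $\rank T_\alpha\le 2c(c-1)$ only saturates the hypothesis, and what you offer as a remedy is a plan rather than an argument. You propose to place the earlier pairs $(x_j,y_j)_{j<c}$ in ``sufficiently general position'' and appeal to Zariski density (or counting over $\mathbb{F}_q$), but the obstructing direction $\alpha$ is not fixed: it depends on all prior choices through $W_{c-1}=\spa(M_{11},\dots,M_{c-1,c-1})$, so the ``bad'' locus is not a single proper closed subset one can avoid by a generic choice. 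You have not exhibited any concrete invariant of the earlier data that precludes the rigid equality configuration $\ker T_\alpha\subseteq U_c$, $T_\alpha(U_c)=V_c^\perp$, nor explained why the uniform rank lower bound on \emph{other} elements of $\spa(T_1,\dots,T_c)$ gives slack here. Over small finite fields the situation is worse: a Zariski-density escape is unavailable, and your ``explicit counting'' is only asserted, not carried out. As written, the proof is incomplete precisely at the step where the constant $2c(c-1)$ is sharp.
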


\begin{theorem}\cite{baily2024strength,chen2024stability,moshkovitz2024uniform} \label{AR stability}
There are functions $C_{1}$ and $C_{2}$ on $\mathbb{N}$ such that  for any $T \in \mathbb{F}_q^{n_1}\otimes \cdots \otimes \mathbb{F}_q^{n_d}$,  we have 
    \[
    C_1(d) \AR ( T )\le \GR( T )\le C_2(d)\AR ( T ).
    \]
\end{theorem}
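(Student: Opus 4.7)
The plan is to identify both $\AR(T)$ and $\GR(T)$ with invariants of the single affine variety
\[
V_T \coloneqq \bigl\{(u_2,\ldots,u_d) : \langle T, u_2 \otimes \cdots \otimes u_d\rangle = 0 \bigr\} \subseteq \mathbb{A}^{n_2 + \cdots + n_d}_{\overline{\mathbb{F}_q}},
\]
and to translate between analytic point counts and algebraic dimension via Lang--Weil estimates. Directly from \eqref{eq:bias}, one has $\bias(T) = |V_T(\mathbb{F}_q)|/q^{n_2+\cdots+n_d}$, hence $\AR(T) = (n_2+\cdots+n_d) - \log_q |V_T(\mathbb{F}_q)|$. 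Meanwhile $\GR(T) = \codim V_T$, using that the codimension of the vanishing locus of the contraction of $T$ against any $d-1$ of its factors is intrinsic to $T$ (a standard symmetry argument). The theorem therefore reduces to the estimate $\log_q |V_T(\mathbb{F}_q)| = \dim V_T + O_d(1)$.

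For the easy direction $\GR(T) \lesssim \AR(T) + O_d(1)$, I would use that $V_T$ is cut out by $n_1$ multilinear equations of multidegree $(1,\ldots,1)$. Projectivizing each tensor factor yields a subvariety of $\mathbb{P}^{n_2-1} \times \cdots \times \mathbb{P}^{n_d-1}$ whose irreducible components have bounded number and multidegree in terms of $d$ alone. A Bezout/Schwartz--Zippel argument then gives $|V_T(\mathbb{F}_q)| \le C_2(d) \cdot q^{\dim V_T}$, hence $\AR(T) \ge \GR(T) - \log_q C_2(d)$.

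The reverse inequality $|V_T(\mathbb{F}_q)| \ge c_d \cdot q^{\dim V_T}$ is the substantive and most delicate part, and is where I expect the main obstacle to lie. The Lang--Weil estimate says that an absolutely irreducible $\mathbb{F}_q$-variety $W$ of dimension $r$ and bounded degree satisfies $|W(\mathbb{F}_q)| = q^r + O_d(q^{r-1/2})$, so one wishes to exhibit a top-dimensional absolutely irreducible component of $V_T$ that is $\mathbb{F}_q$-defined. The difficulty is that the top-dimensional components of $V_T$ need not be absolutely irreducible, and may be defined only over a proper extension of $\mathbb{F}_q$, with $\Gal(\overline{\mathbb{F}_q}/\mathbb{F}_q)$ permuting them nontrivially. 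The remedy is to pass to an extension $\mathbb{F}_{q^k}$ of degree $k \le k(d)$ over which such a component is defined, apply Lang--Weil there, and compare $\bias_{q^k}(T)$ with $\bias_q(T)$; equivalently, one verifies the clean asymptotic identity $\lim_{k \to \infty} \bias_{q^k}(T)^{1/k} = q^{-\GR(T)}$ and then descends to finite $k$ using the uniform degree bounds from the easy direction.

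Finally, to convert the resulting additive estimate $|\AR(T) - \GR(T)| \le O_d(1)$ into the multiplicative form $C_1(d)\AR(T) \le \GR(T) \le C_2(d)\AR(T)$, I would observe that $\AR(T) = 0$ iff $\bias(T) = 1$ iff $V_T$ is all of $\mathbb{A}^{n_2+\cdots+n_d}$ iff $T = 0$, in which case $\GR(T) = 0$ as well; and for $T \ne 0$ the upper bound $|V_T(\mathbb{F}_q)| \le C_2(d) q^{(n_2+\cdots+n_d)-1}$ already forces $\AR(T) = \Omega_d(1)$ once $q$ exceeds a threshold depending on $d$, while smaller $q$ are handled by the same base-change argument used above. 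The additive slack then fits inside a multiplicative constant. The heart of the proof, and the principal obstacle, is the uniform Lang--Weil lower bound of the third paragraph.
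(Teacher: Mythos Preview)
The paper does not prove this theorem; it is quoted from the cited references as a preliminary fact, so there is no in-paper argument to compare against. Your outline is nonetheless worth examining, because it contains a concrete error.

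Your central claim is an additive estimate $|\AR(T) - \GR(T)| \le O_d(1)$, obtained via the bound $|V_T(\mathbb{F}_q)| \le C_2(d)\, q^{\dim V_T}$ with $C_2(d)$ depending only on $d$. This is false. Take $d=3$, $q=2$, and $T = I_n \in \mathbb{F}_2^n \otimes \mathbb{F}_2^n \otimes \mathbb{F}_2^n$. Then $V_T = \{(u,v) : u_iv_i = 0\ \text{for all}\ i\}$, which has $2^n$ irreducible components, $\dim V_T = n$, and $|V_T(\mathbb{F}_2)| = 3^n$. Hence $|V_T(\mathbb{F}_2)|/2^{\dim V_T} = (3/2)^n$ is unbounded in $n$, and no uniform $C_2(d)$ can exist. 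Correspondingly $\GR(I_n) = n$ while $\AR(I_n) = n(2 - \log_2 3) \approx 0.415\,n$, so $\GR(I_n) - \AR(I_n) \approx 0.585\,n$ is not $O_d(1)$. The false step is precisely your assertion that the projectivized $V_T$ has ``irreducible components [of] bounded number and multidegree in terms of $d$ alone'': already for $I_n$ there are $2^n$ of them.

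Lang--Weil does enter the proofs in the cited references, but its error term scales with the degree of $V_T$, which is not uniformly bounded in the dimensions $n_i$; one therefore cannot reach an additive $O_d(1)$ conclusion and then convert it to the stated multiplicative one. The arguments in those references are multiplicative from the outset and exploit the inductive fibration structure of $V_T$ (each slice $\langle T,u\rangle$ is again a tensor of the same shape of lower order) together with stability of the bias under base change, rather than a raw comparison of $|V_T(\mathbb{F}_q)|$ against $q^{\dim V_T}$ up to an additive constant.
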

\subsection{Commutative algebra} 
Let $R$ be a Noetherian ring and let $\mathfrak{p}$ be a prime ideal in $R$.  
The \emph{height} of $\mathfrak{p}$ is $\height(\mathfrak{p}) \coloneqq \max \left\lbrace h: 
\mathfrak{p}_0 \subsetneq \cdots \subsetneq \mathfrak{p}_h = \mathfrak{p}
\right\rbrace$.  For an arbitrary ideal $\mathfrak{a} \subseteq R$,  its \emph{height} is defined as $\height(\mathfrak{a}) \coloneqq \min \left\lbrace \height(\mathfrak{p}): \mathfrak{a} \subseteq \mathfrak{p},\;  \mathfrak{p} \text{~is prime}
\right\rbrace$.
\begin{theorem}\cite[Theorem~7.5]{kemper2011course}\label{thm:principal ideal}
For any ideal $\mathfrak{a} = (f_1,\dots,  f_n)$ in a Noetherian ring $R$,  we have $\height(\mathfrak{a}) \le n$. 
\end{theorem}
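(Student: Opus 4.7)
The plan is to prove this by induction on the number of generators $n$, with the base case $n=1$ being Krull's principal ideal theorem (the Hauptidealsatz). For the base case, I would localize at a minimal prime $\mathfrak{p}$ of $(f)$, reducing to a Noetherian local ring in which $\mathfrak{p}$ is the unique maximal ideal and $R/(f)$ is Artinian. For any prime $\mathfrak{q} \subsetneq \mathfrak{p}$, I would work with symbolic powers $\mathfrak{q}^{(k)} := \mathfrak{q}^{k} R_{\mathfrak{q}} \cap R$: the descending chain of images in the Artinian quotient $R/(f)$ must stabilize, and Nakayama's lemma (applied with $f \notin \mathfrak{q}$) then forces $\mathfrak{q}^{(k)} = \mathfrak{q}^{(k+1)}$ for large $k$. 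Localizing at $\mathfrak{q}$ and applying Nakayama once more shows $\mathfrak{q} R_{\mathfrak{q}}$ is nilpotent, so $\mathfrak{q}$ is a minimal prime and $\height(\mathfrak{q}) = 0$, yielding $\height(\mathfrak{p}) \le 1$.

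For the inductive step, I would assume the statement for ideals with at most $n-1$ generators, let $\mathfrak{p}$ be minimal over $\mathfrak{a} = (f_1,\dots, f_n)$, and pick a chain $\mathfrak{p}_0 \subsetneq \cdots \subsetneq \mathfrak{p}_h = \mathfrak{p}$. After refining so that no prime lies strictly between $\mathfrak{p}_{h-1}$ and $\mathfrak{p}$, minimality of $\mathfrak{p}$ over $\mathfrak{a}$ guarantees some generator, say $f_n$, is outside $\mathfrak{p}_{h-1}$, so $\mathfrak{p}$ becomes minimal over $\mathfrak{p}_{h-1} + (f_n)$. The key technical step is a generator-adjustment: for each $i<n$, write $f_i^{k_i} = g_i + r_i f_n$ with $g_i \in \mathfrak{p}_{h-1}$, so $\mathfrak{p}$ is still minimal over $(g_1, \ldots, g_{n-1}, f_n)$. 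Passing to $R/(g_1, \ldots, g_{n-1})$, the image of $\mathfrak{p}$ is minimal over $(\bar{f_n})$ and thus has height at most one by the base case; combined with the strict inclusion $\mathfrak{p}_{h-1} \subsetneq \mathfrak{p}$ and $(g_1,\ldots,g_{n-1}) \subseteq \mathfrak{p}_{h-1}$, this forces $\mathfrak{p}_{h-1}$ to be a minimal prime of $(g_1, \ldots, g_{n-1})$. The inductive hypothesis then gives $\height(\mathfrak{p}_{h-1}) \le n-1$, and since $h - 1 \le \height(\mathfrak{p}_{h-1})$, we conclude $h \le n$.

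The main obstacle is the base case. Ordinary powers $\mathfrak{q}^{k}$ need not stabilize modulo $(f)$, so one is forced to work with symbolic powers and carefully track when Nakayama's lemma applies; this is what makes the Hauptidealsatz nontrivial even for $n=1$. A secondary subtlety lies in the inductive step: one must justify that the $k_i$ exist (which uses the base case applied inside $R/\mathfrak{p}_{h-1}$ together with the no-intermediate-primes refinement) and then verify that $(g_1, \ldots, g_{n-1}, f_n)$ and $(f_1, \ldots, f_n)$ cut out the same minimal prime $\mathfrak{p}$, which is a small but essential computation.
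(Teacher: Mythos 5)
The paper does not prove this statement at all --- it is quoted as Krull's height theorem from Kemper's textbook --- so your proposal can only be measured against the standard argument, which is indeed the route you take. Your base case (the Hauptidealsatz via symbolic powers, the Artinian quotient $R_{\mathfrak{p}}/(f)$, and two applications of Nakayama) is the classical proof and is fine as a sketch, although the parenthetical ``Nakayama applied with $f \notin \mathfrak{q}$'' conflates two separate inputs: $f \notin \mathfrak{q}$ is what lets you use $\mathfrak{q}$-primariness of $\mathfrak{q}^{(k)}$ to get $\mathfrak{q}^{(k)} = \mathfrak{q}^{(k+1)} + f\,\mathfrak{q}^{(k)}$, while Nakayama needs $f \in \mathfrak{p}R_{\mathfrak{p}}$.

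The inductive step, however, has a genuine gap. The generator-adjustment, i.e.\ the existence of $k_i$ with $f_i^{k_i} \in \mathfrak{p}_{h-1} + (f_n)$, is precisely the claim $f_i \in \sqrt{\mathfrak{p}_{h-1} + (f_n)}$, and your stated justification (the base case in $R/\mathfrak{p}_{h-1}$ plus the saturated chain) only shows that $\mathfrak{p}$ is \emph{one} minimal prime of $\mathfrak{p}_{h-1} + (f_n)$; the radical is the intersection of \emph{all} of them, and other minimal primes need not contain $f_i$. Concretely, take $R = k[x,y,z]$, $\mathfrak{a} = (x,y)$, $\mathfrak{p} = (x,y)$, and the saturated chain $(0) \subsetneq (xz-y) \subsetneq (x,y)$. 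With $\mathfrak{p}_{h-1} = (xz-y)$ and the choice $f_n = y$ (which lies outside $\mathfrak{p}_{h-1}$, so your argument permits it), one has $\mathfrak{p}_{h-1} + (f_n) = (xz,y)$, whose minimal primes are $(x,y)$ and $(y,z)$; since $x \notin (y,z)$, no power of $x$ lies in $(xz,y)$, so the required $k_1, g_1, r_1$ do not exist. The missing idea is to localize at $\mathfrak{p}$ at the very start of the induction --- this changes neither $\height(\mathfrak{p})$ nor the minimality of $\mathfrak{p}$ over $\mathfrak{a}$ --- so that $\mathfrak{p}$ becomes the unique prime containing $\mathfrak{p}_{h-1} + (f_n)$ and hence equals its radical, which yields the $k_i$. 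With that repair, the remainder of your inductive step (minimality of $\mathfrak{p}$ over $(g_1,\dots,g_{n-1},f_n)$, the Hauptidealsatz in $R/(g_1,\dots,g_{n-1})$ forcing $\mathfrak{p}_{h-1}$ to be minimal over $(g_1,\dots,g_{n-1})$, then the inductive hypothesis and $h-1 \le \height(\mathfrak{p}_{h-1}) \le n-1$) goes through as written.
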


A Noetherian ring $R$ is said to satisfy \emph{the Serre's condition $(R_1)$} if $R_{\mathfrak{p}}$ is regular for each prime ideal $\mathfrak{p} \subseteq R$ of height at most $1$.  A sequence $f_1,\dots,  f_m \in  \mathbb{K}[x_1,\dots,  x_n]$ is called an \emph{$R_1$-sequence} (resp.  a prime sequence) if for each $1 \le s \le m$,  the quotient ring $\mathbb{K}[x_1,\dots,  x_n]/(f_1,\dots,  f_s)$ satisfies the Serre's condition $(R_1)$ (resp.  is a domain).  Clearly,  an $R_1$-sequence of forms in $\mathbb{K}[x_1,\dots,  x_n]$ is a prime sequence.  In particular,  it is a regular sequence. 

\begin{theorem}\cite[Theroem~B]{ananyan2020small}\label{algebraically closed small subalgebra}
There is a function $D(d,m)$ on $\mathbb{N} \times \mathbb{N}$ with the following property.  For any algebraically closed field $\mathbb{K}$ and forms $f_1,\dots,  f_m \in \mathbb{K}[x_1,\dots,  x_n]_{\le d}$,  there exists an $R_1$-sequence of forms $g_1,\dots,  g_s \in \mathbb{K}[x_1,\dots,  x_n]_{\le d}$,  where $s \le D(d,  m)$,  such that $\{ f_1,\dots,  f_m \} \subseteq \mathbb{K}[g_{1},\cdots,g_{m}]$. 
\end{theorem}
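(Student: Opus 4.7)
The plan is to mimic the Ananyan--Hochster strategy, combining a structural theorem on ``strong'' sequences with a degree-wise descent. First I would introduce the notion of \emph{strength}: for a form $f$ of degree $\ge 2$, define $\str(f)$ to be the least integer $s$ such that $f=\sum_{i=1}^{s}g_{i}h_{i}$ for forms $g_{i},h_{i}$ of positive degree strictly less than $\deg f$, minus $1$; set $\str(f)=\infty$ when $\deg f\le 1$. The \emph{collective strength} of a tuple $(f_{1},\dots,f_{k})$ is the minimum of $\str(\sum c_{i}f_{i})$ over all nonzero $c\in\mathbb{K}^{k}$.

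The substantive ingredient, which I would extract as a black box from Ananyan--Hochster, is the existence of a function $\eta(d,k)$ such that over an algebraically closed field any tuple of forms of degree $\le d$ whose collective strength exceeds $\eta(d,k)$ is automatically an $R_{1}$-sequence. This is the hard part: the proof involves a delicate analysis showing that every component of the singular locus of $V(f_{1},\dots,f_{k})$ of codimension below the expected value must come from a Jacobian relation forcing a low-strength combination of the $f_{i}$; conversely, sufficient collective strength forces both the regular-sequence property and the Serre condition $R_{1}$ on the quotient ring.

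Granting this lemma, I would build the desired generating set by a descent on the multidegree vector $\nu(G)=(N_{d}(G),N_{d-1}(G),\dots,N_{1}(G))$, where $N_{e}(G)$ counts the forms in the current working set $G$ of degree exactly $e$, and we order such vectors lexicographically. Start with $G=\{f_{1},\dots,f_{m}\}$, so $\{f_{1},\dots,f_{m}\}\subseteq\mathbb{K}[G]$ trivially. If the collective strength of $G$ exceeds $\eta(d,|G|)$, we are done. Otherwise some nonzero combination $h=\sum c_{i}g_{i}$ of elements of $G$ satisfies $h=\sum_{j=1}^{\eta(d,|G|)+1}p_{j}q_{j}$ with $\deg p_{j},\deg q_{j}<\deg h\le d$. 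Pick an $i$ with $c_{i}\ne 0$ and $\deg g_{i}=\deg h$; since $g_{i}$ can be recovered from the $p_{j},q_{j}$ and the remaining elements of $G$ by division by $c_{i}$, we replace $g_{i}$ in $G$ by the forms $p_{1},q_{1},\dots,p_{\eta+1},q_{\eta+1}$ while preserving the containment $\{f_{1},\dots,f_{m}\}\subseteq\mathbb{K}[G]$. The vector $\nu(G)$ strictly decreases, so the procedure terminates.

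A function $D(d,m)$ then falls out of analyzing the recursion: at each descent step at most $2(\eta(d,|G|)+1)$ new forms of strictly smaller degree replace one form of the top degree, and the degrees cascade from $d$ down to $1$; at degree $1$ one just retains a linearly independent subset, which is $R_{1}$ trivially after a linear change of coordinates. Solving the resulting recursion gives an (admittedly iterated-exponential) function $D(d,m)$, as required. The principal obstacle is of course the structural lemma in the second paragraph; all of the algebraic geometry is concentrated there, and once it is in hand the descent is essentially combinatorial bookkeeping.
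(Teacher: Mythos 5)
The paper does not prove this statement at all: it is quoted verbatim as Theorem~B of Ananyan--Hochster \cite{ananyan2020small}, so there is no internal proof to compare against. Your sketch is a faithful outline of the original Ananyan--Hochster argument --- reduce to a structural lemma saying that sufficiently high collective strength forces an $R_1$-sequence, then run a lexicographic descent on the degree-count vector, replacing a low-strength form by the factors in its strength decomposition while preserving the subalgebra containment --- and this is also exactly the architecture the present paper imitates when it proves its general-field analogue, Proposition~\ref{small subalgebra general field}, from Lemma~\ref{mincodim and R_eta-algebraically closed} and Theorems~\ref{codim vs str}--\ref{stability of minstr}. The one caveat is that your ``black box'' (strength above $\eta(d,k)$ implies $R_1$) is precisely Theorem~A(b) of \cite{ananyan2020small}, i.e.\ the entire mathematical content of the result; so what you have is a correct reduction of Theorem~B to Theorem~A(b) plus combinatorial bookkeeping, not an independent proof --- which is fine here, since the paper itself treats the statement as imported.
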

\begin{definition}[Strength]
For each nonzero form $f \in \mathbb{K}[x_1,\dots,  x_n]$,  the strength of $f$ is 
\[
\str(f) \coloneqq \min \left\lbrace
r: f = \sum_{i=1}^r a_i b_i \text{~for some forms $a_i,  b_i$ of positive degrees} 
\right\rbrace.
\]
By convention,  we have $\str(f) = \infty$ if $\deg (f) = 1$.  
\end{definition}

Given a form $f\in \mathbb{K}[x_1,\dots,  x_n]$,  we define the homogeneous ideal
\begin{equation}\label{eq:af}
\mathfrak{a}(f) \coloneqq (\partial f/ \partial x_1,\dots,  \partial f/ \partial x_n) \subseteq \mathbb{K}[x_1,\cdots,  x_n]. 
\end{equation}
We observe that $f \in \mathfrak{a}(f)$ as $f$ is a form.  Given a graded subspace $V \subseteq \mathbb{K}[x_1,\dots,  x_n]$,  we define 
\begin{equation}\label{eq:ht(af)}
\height( \mathfrak{a}(V) ) \coloneqq \min\{ \height (\mathfrak{a}(f)): \text{$f \in V\setminus \{0\}$ is a form} \},  \quad 
\str(V) \coloneqq \min\{ \str(f): \text{$f \in V\setminus \{0\}$ is a form} \}.
\end{equation}
In this paper,  we always view $V^{\overline{\mathbb{K}}} \coloneqq V \otimes_{\mathbb{K}} \overline{\mathbb{K}}$ as a $\overline{\mathbb{K}}$-vector space.  Thus,  there is no risk of confusion to use notations $\height( \mathfrak{a}(V^{\overline{\mathbb{K}}}) )$ and $\str(V^{\overline{\mathbb{K}}})$,  where
\begin{align*}
\height( \mathfrak{a}(V^{\overline{\mathbb{K}}}) ) &\coloneqq \min\{ \height (\mathfrak{a}(g)): \text{$g \in V^{\overline{\mathbb{K}}} \setminus \{0\}$ is a form} \},  \\ 
\str(V^{\overline{\mathbb{K}}}) &\coloneqq \min\{ \str(g): \text{$g \in V^{\overline{\mathbb{K}}} \setminus \{0\}$ is a form} \}.
\end{align*}
Here $\height(\mathfrak{a}(g))$ is the height (resp.  strength) of $\mathfrak{a}(g)$ (resp.  $g$) as an ideal (resp.  a form) in $\overline{\mathbb{K}}[x_1,\dots,  x_n]$.
\begin{theorem}\cite[Theorem~A]{ananyan2020small}  \label{codim vs str} 
There exists a function $E$ on $\mathbb{N} \times \mathbb{N}$ with the following property.  Suppose that $\mathbb{K}$ is an algebraically closed field and $f \in \mathbb{K}[x_1,\dots,  x_n]_{\le d}$ is a form such that $\height(\mathfrak{a}(f)) \le s$,  then $\str(f) \le E(d,s)$. 
\end{theorem}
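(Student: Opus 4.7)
The plan is to prove the Ananyan--Hochster strength-vs-height theorem by induction on $d = \deg f$. The cases $d = 1$ (where $\str(f) = \infty$ by convention makes the conclusion vacuous) and $d = 2$ (handled via the normal form of quadratic forms over the algebraically closed field $\mathbb{K}$) provide the base.

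For the inductive step with $d \ge 3$, assume $\height(\mathfrak{a}(f)) \le s$. I would first invoke Euler's identity $d \cdot f = \sum_i x_i \, \partial f/\partial x_i$ (or its Hasse-derivative surrogate in characteristics dividing $d$) to place $f$ itself inside the ideal $\mathfrak{a}(f)$, and then use a generic-element argument over the infinite field $\mathbb{K}$ to extract from $\mathfrak{a}(f)$ a subsystem of $s$ forms $h_1, \dots, h_s$ of degree $d-1$ whose vanishing locus coincides with that of $\mathfrak{a}(f)$, so that $f^N \in (h_1, \dots, h_s)$ for some $N$. I would then apply Theorem~\ref{algebraically closed small subalgebra} to $h_1, \dots, h_s$, producing an $R_1$-sequence $g_1, \dots, g_t$ of forms of degree $\le d-1$ with $t \le D(d-1, s)$ and $h_i \in \mathbb{K}[g_1, \dots, g_t]$; homogeneity of the $h_i$ forces them into the ideal $(g_1, \dots, g_t)$. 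The crucial payoff is that an $R_1$-sequence of forms is a prime sequence (as noted in the excerpt), so $(g_1, \dots, g_t)$ is a homogeneous prime ideal, and the containment $f^N \in (h_1,\dots,h_s) \subseteq (g_1, \dots, g_t)$ descends to $f \in (g_1, \dots, g_t)$. Writing $f = \sum_{j=1}^t g_j q_j$ with $\deg g_j, \deg q_j \ge 1$ then exhibits $\str(f) \le t \le D(d-1, s)$, so one may take $E(d, s) := D(d-1, s)$.

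The principal obstacle lies in the generic-element step: extracting $h_1, \dots, h_s$ that simultaneously have height $s$ \emph{and} whose radical equals $\sqrt{\mathfrak{a}(f)}$ — the weaker conclusion $\height(h_1, \dots, h_s) = s$ follows from routine prime avoidance, but the equality of radicals is delicate and requires a refined primary-decomposition analysis together with control of the embedded components of $\mathfrak{a}(f)$. A secondary challenge is the small-characteristic case, where Euler's identity fails and one must certify via Hasse derivatives that $f$ lies in a suitably enlarged Jacobian-type ideal whose height is still controlled by $s$; without this, the descent from $f^N$ to $f$ at the end has no $f$ to descend from.
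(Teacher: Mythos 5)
The paper does not prove this statement at all: Theorem~\ref{codim vs str} is quoted verbatim from Ananyan--Hochster (their Theorem~A), so there is no in-paper argument to compare yours with, and your proposal has to stand on its own as a proof of a genuinely deep external theorem. It does not. The pivotal step --- extracting $s$ forms $h_1,\dots,h_s\in\mathfrak{a}(f)$ of degree $d-1$ whose vanishing locus coincides with that of $\mathfrak{a}(f)$, so that $f^N\in(h_1,\dots,h_s)$ --- is not merely ``delicate'' as you concede; as stated it is false. Generic linear combinations of the partials only achieve $\height(h_1,\dots,h_s)=s$; their zero locus is in general strictly larger than $\mathrm{Z}(\mathfrak{a}(f))$, the inclusion of radicals $\sqrt{(h_1,\dots,h_s)}\subseteq\sqrt{\mathfrak{a}(f)}$ goes in the direction that is useless to you, and $f$ has no reason to vanish on the excess components. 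Concretely, take $f=x^2y$: here $\mathfrak{a}(f)=(x^2,xy)$ has height $1$, but a generic degree-$2$ element $h=\alpha x^2+\beta xy$ has $\mathrm{Z}(h)=\mathrm{Z}(x)\cup\mathrm{Z}(\alpha x+\beta y)$ and $f\notin\sqrt{(h)}$, so the generic-element argument already fails for $s=1$. Worse, equality of radicals with only $s$ elements is outright impossible whenever $\mathrm{Z}(\mathfrak{a}(f))$ has an irreducible component of codimension $>s$: by Krull's height theorem every component of $\mathrm{Z}(h_1,\dots,h_s)$ has codimension at most $s$, whereas the hypothesis $\height(\mathfrak{a}(f))\le s$ only controls the component of smallest codimension. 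Since the entire chain $f^N\in(h_1,\dots,h_s)\subseteq(g_1,\dots,g_t)$, primeness of the $R_1$-ideal, hence $f\in(g_1,\dots,g_t)$, rests on this step, the argument collapses; no repair is offered.

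Two further problems compound this. First, the statement in the paper carries no characteristic hypothesis, and in characteristic $p$ dividing $d$ even the starting point $f\in\mathfrak{a}(f)$ fails (e.g.\ $f=x^p$ has $\mathfrak{a}(f)=0$); your ``Hasse-derivative surrogate'' is therefore not a side remark but an essential missing ingredient, and none is supplied. Second, your architecture derives Theorem~\ref{codim vs str} in degree $d$ from Theorem~\ref{algebraically closed small subalgebra} in degree $d-1$; in Ananyan--Hochster these two statements are established by an intertwined induction on degree, so such a reduction would at minimum require checking that their proof of the degree-$(d-1)$ small-subalgebra theorem does not already invoke the strength--codimension bound in degree $d$ --- a circularity check you do not perform, though this concern is secondary to the gap above. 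As it stands, the proposal is a sketch with an admitted (and in fact fatal) hole at its central step, not a proof.
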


\begin{theorem}\cite[Theorem~1.5.3]{bik2025strength}\label{stability of minstr}
There is a function $F$ on $\mathbb{N} \times \mathbb{N} \times \mathbb{N}$ with the following property.  Suppose $\mathbb{K}$ is a field and $V \subseteq \mathbb{K}[x_1,\dots,  x_n]_d$ is a linear subspace such that 
\begin{enumerate}[label=(\roman*)]
\item Either $\ch(\mathbb{K}) = 0$ or $\ch(\mathbb{K}) > d$.
\item $\dim V \le m $.
\item $\str(V^{\overline{\mathbb{K}}}) \le r$.
\end{enumerate}
Then $\str(V) \le F(d,m,r)$.
%
\end{theorem}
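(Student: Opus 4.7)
I propose to prove this descent principle for strength by combining a Galois-theoretic reduction, an Ananyan--Hochster style small subalgebra argument, and a finite-dimensional extraction.

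First, I fix a $\mathbb{K}$-basis $v_{1},\ldots,v_{m}$ of $V$ and a witness $g \in V^{\overline{\mathbb{K}}}$ realizing the hypothesis, with an explicit decomposition $g = \sum_{i=1}^{r} a_{i} b_{i}$. The coefficients of $g$ in the basis $\{v_{i}\}$ together with those of the $a_{i}, b_{i}$ lie in some finite Galois extension $\mathbb{L}/\mathbb{K}$, but $[\mathbb{L}:\mathbb{K}]$ is not a priori controlled by $(d,m,r)$, so naive Galois averaging only yields the unhelpful bound $r \cdot [\mathbb{L}:\mathbb{K}]$ on the strength of a $\mathbb{K}$-rational form.

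Second, I would apply Theorem~\ref{algebraically closed small subalgebra} to $\{v_{1},\ldots,v_{m}\}$ viewed as forms over $\overline{\mathbb{K}}$, producing an $R_{1}$-sequence $h_{1},\ldots,h_{s} \in \overline{\mathbb{K}}[x_{1},\ldots,x_{n}]$ with $s \le D(d,m)$ and $V^{\overline{\mathbb{K}}} \subseteq \overline{\mathbb{K}}[h_{1},\ldots,h_{s}]$. The crucial next move is to replace $h_{1},\ldots,h_{s}$ by a sequence defined over $\mathbb{K}$: the hypothesis $\ch(\mathbb{K}) = 0$ or $\ch(\mathbb{K}) > d$ licenses the polar identities needed here, and combining those identities with the $R_{1}$-sequence property and weighted Galois averaging should produce a sequence $h_{1}',\ldots,h_{s'}' \in \mathbb{K}[x_{1},\ldots,x_{n}]$ of length $s'$ bounded in $(d,m)$ such that $V \subseteq \mathbb{K}[h_{1}',\ldots,h_{s'}']$.

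Third, the subspace $V$ now sits inside the finite-dimensional $\mathbb{K}$-vector space $W = \mathbb{K}[h_{1}',\ldots,h_{s'}']_{d}$, whose dimension depends only on $d$ and $s'$, hence only on $d$ and $m$. I would convert the constructible ``$\str \le r$'' locus to the Zariski-closed ``$\height(\mathfrak{a}(\cdot)) \le 2r$'' locus via Theorem~\ref{thm:principal ideal}, then recover a strength bound via Theorem~\ref{codim vs str}. The resulting closed condition is $\mathbb{K}$-rational on $W$, and its intersection with $V$ is a $\mathbb{K}$-cone whose $\overline{\mathbb{K}}$-points are non-zero; a dimension count in the finite-dimensional setting, together with an induction on $\dim V$, should then produce a non-zero $\mathbb{K}$-rational form of strength at most $F(d,m,r)$.

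The main obstacle is the second step: arranging the small-subalgebra generators to be $\mathbb{K}$-rational without uncontrolled inflation of their count. This is precisely where the characteristic hypothesis is indispensable, since in small positive characteristic the Frobenius obstructs polarization and analogous stability statements for the partition and analytic ranks are known to fail; one expects a corresponding obstruction here, making the characteristic restriction genuinely necessary rather than a convenience.
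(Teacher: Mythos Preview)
The paper does not prove this statement; it is quoted without proof from \cite{bik2025strength} (Theorem~1.5.3 there) and used as a black box in the proofs of Propositions~\ref{mincodim and R_eta-general field} and \ref{small subalgebra general field}. There is therefore no in-paper proof to compare against.

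That said, your outline has genuine gaps. Step~2 is, as you concede, the crux, and the phrase ``polar identities and weighted Galois averaging'' does not carry it: the $h_i$ live in an extension of $\mathbb{K}$ whose degree is not bounded in $(d,m)$, so Galois averaging inflates the count uncontrollably, while polarization relates a form to its own partial derivatives rather than to an ambient $R_1$-sequence. Note also that the paper does eventually establish a $\mathbb{K}$-rational small-subalgebra theorem (Proposition~\ref{small subalgebra general field}), but its proof \emph{uses} Theorem~\ref{stability of minstr}, so you cannot invoke it here without circularity.

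Step~3 does not close either. Passing from the locus $\{\str_{\overline{\mathbb{K}}}\le r\}$ to the Zariski-closed, $\mathbb{K}$-rational locus $\{\height(\mathfrak{a}(\cdot))\le 2r\}$ is fine, but a $\mathbb{K}$-rational cone with a nonzero $\overline{\mathbb{K}}$-point need not have a nonzero $\mathbb{K}$-point (take $x^2+y^2=0$ over $\mathbb{R}$), so ``a dimension count'' is not enough. Worse, even granting a $\mathbb{K}$-rational $f$ with $\height(\mathfrak{a}(f))\le 2r$, Theorem~\ref{codim vs str} only bounds $\str_{\overline{\mathbb{K}}}(f)$, not $\str_{\mathbb{K}}(f)$; recovering a $\mathbb{K}$-strength bound from the height is exactly the descent you set out to prove, now in the special case $\dim V=1$, so your induction on $\dim V$ has no base case.
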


\subsection{Algebraic geometry}
\begin{lemma}\cite[Proposition~1.7.2]{stanley2011enumerative}\label{counting subspace} 
We denote by $\Gr(c,  \mathbb{F}_{q}^{n})$ the Grassmannian of $c$-dimensional linear subspaces of $\mathbb{F}_{q}^{n}$.  Then 
\[
\left\lvert \Gr(c,  \mathbb{F}_{q}^{n}) \right\rvert  = \frac{(q^{n}-1)\cdots(q^{n}-q^{c-1})}{(q^{c}-1)\cdots(q^{c}-q^{c-1})}.
\]
\end{lemma}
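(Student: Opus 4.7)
The plan is to prove this classical $q$-binomial identity by the standard double-counting argument: count ordered linearly independent $c$-tuples in $\mathbb{F}_q^n$ two different ways.

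First I would count the set $X$ of all ordered $c$-tuples $(v_1,\dots,v_c) \in (\mathbb{F}_q^n)^c$ of linearly independent vectors directly. Choose $v_1$ to be any nonzero vector, giving $q^n - 1$ options. Given $v_1,\dots,v_{i-1}$ already chosen and linearly independent, $v_i$ must lie outside their $\mathbb{F}_q$-span, which has exactly $q^{i-1}$ elements; so there are $q^n - q^{i-1}$ valid choices for $v_i$. Multiplying through gives
\[
|X| = \prod_{i=1}^{c} (q^n - q^{i-1}).
\]

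Next I would count $X$ by first choosing the $c$-dimensional subspace $W = \mathrm{span}(v_1,\dots,v_c)$, then choosing an ordered basis of $W$. Fixing any isomorphism $W \cong \mathbb{F}_q^c$ and applying the same argument to $\mathbb{F}_q^c$ in place of $\mathbb{F}_q^n$, the number of ordered bases of $W$ equals $\prod_{i=1}^{c}(q^c - q^{i-1})$, a quantity independent of $W$. Since every tuple in $X$ arises from a unique $W \in \Gr(c,\mathbb{F}_q^n)$ paired with a unique ordered basis,
\[
|X| = |\Gr(c,\mathbb{F}_q^n)| \cdot \prod_{i=1}^{c}(q^c - q^{i-1}).
\]

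Finally I would equate the two expressions and solve for $|\Gr(c,\mathbb{F}_q^n)|$, obtaining the stated ratio. There is essentially no obstacle here; the only subtlety is confirming that the span of any fixed $c$-tuple of linearly independent vectors has exactly $q^{i-1}$ elements when restricted to the first $i-1$ vectors, which is immediate from the definition of $\mathbb{F}_q$-dimension. The argument works uniformly for every prime power $q$ and every $1 \le c \le n$.
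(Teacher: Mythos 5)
Your double-counting argument is correct and is essentially the standard proof of this classical formula; the paper itself simply cites \cite[Proposition~1.7.2]{stanley2011enumerative}, where the same count of ordered linearly independent $c$-tuples (directly, versus subspace plus ordered basis) is used. Nothing further is needed.
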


\begin{proposition}[Bezout theorem]\cite[Example~8.4.6]{fulton2013intersection} \label{gen-Bezout}
Let $\mathbb{K}$ be a field and let $X,  Y$ be affine subvarieties in $\mathbb{K}^n$.  Then we have $\deg(X \cap Y)\le \deg(X)\deg(Y)$.
\end{proposition}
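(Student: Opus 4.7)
The plan is to deduce this affine inequality from the refined Bezout theorem in projective space. By additivity of degree over irreducible components and the obvious distributivity of intersection, it suffices to handle the case where $X$ and $Y$ are irreducible and then sum over pairs of components. I would also pass to the algebraic closure $\overline{\mathbb{K}}$ at the outset: base change may break a $\mathbb{K}$-component into several Galois-conjugate $\overline{\mathbb{K}}$-components, but the total degree is preserved, and similarly for $X \cap Y$, so the inequality over $\overline{\mathbb{K}}$ implies the inequality over $\mathbb{K}$.

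Next I would take the projective closures $\widetilde{X}, \widetilde{Y} \subseteq \mathbb{P}^n$ by homogenising the defining ideals with a new variable $x_0$. Standard facts from classical projective geometry give $\deg(\widetilde{X}) = \deg(X)$ and $\deg(\widetilde{Y}) = \deg(Y)$, and each irreducible component of the affine intersection $X \cap Y$ is the affine part of a unique irreducible component of $\widetilde{X} \cap \widetilde{Y}$ not contained in the hyperplane at infinity $H_\infty = V(x_0)$, with the same degree. Hence
\[
\deg(X \cap Y) \le \deg\bigl(\widetilde{X} \cap \widetilde{Y}\bigr).
\]

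I would then invoke the refined Bezout theorem (Fulton, \emph{Intersection Theory}, Example~8.4.6): for pure-dimensional subvarieties $V, W \subseteq \mathbb{P}^n$,
\[
\sum_{Z} i(Z; V \cdot W)\, \deg(Z) \le \deg(V)\, \deg(W),
\]
where $Z$ ranges over the irreducible components of $V \cap W$. Since every intersection multiplicity is at least $1$, this yields $\deg(V \cap W) \le \deg(V)\deg(W)$. Decomposing $\widetilde{X}$ and $\widetilde{Y}$ into their equidimensional pieces and summing over pairs, then chaining with the previous inequality, completes the argument.

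The main obstacle is bookkeeping: ensuring that the chosen notion of degree — whether as a sum over irreducible components, a Hilbert-series leading coefficient, or the number of intersection points with a generic linear subspace of complementary dimension — is well-behaved under base change, projective closure, and restriction to the open complement of $H_\infty$. Once these definitions are aligned, the refined Bezout inequality supplies the essential content, and the remaining work is purely formal.
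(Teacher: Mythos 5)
Your proposal is correct and matches the paper's treatment: the paper offers no argument of its own beyond the citation to Fulton's Example~8.4.6, and your reduction (splitting into irreducible components, passing to $\overline{\mathbb{K}}$ and to projective closures, then applying the refined Bezout inequality and noting each affine component closes up to a distinct projective component of the same degree) is exactly the standard deduction that citation is meant to encapsulate. The only point to watch is that ``total degree preserved under base change'' requires the Hilbert-polynomial (scheme-theoretic) notion of degree over non-perfect fields, which is harmless here since the paper only applies the proposition to zero sets in $\overline{\mathbb{K}}^n$.
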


\section{Properties of polynomial ideals}\label{sec:poly}
This section records some basic properties of polynomial ideals,  which are essential for proving Theorems~\ref{thm:general},  \ref{thm:general1} and \ref{3-GR<Q infinite}.  Although some results could be generalized to ideals in arbitrary Noetherian rings,  we focus on polynomial ideals for concreteness.  

We first investigate the existence of sequences of polynomials with special properties.  The main results are Propositions~\ref{long regular sequence-2},  \ref{linear extended},  \ref{mincodim and R_eta-algebraically closed} and \ref{mincodim and R_eta-general field}.  Before we proceed,  we recall the following formula that relates the height and dimension of a polynomial ideal.
\begin{lemma}\label{lem:dim+ht}
Let $\mathbb{K}$ be a field and let $\mathfrak{a} \subseteq R \coloneqq \mathbb{K}[x_1,\dots,  x_n]$ be an ideal.  We have 
\begin{equation}\label{eq:dim+ht}
\dim \left( R/\mathfrak{a} \right) + \height \left( \mathfrak{a} \right) = n.
\end{equation}
\end{lemma}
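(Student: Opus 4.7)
The plan is a two-step reduction: first from an arbitrary ideal to prime ideals, then an appeal to the classical dimension theory of polynomial rings. Writing $\mathfrak{p}_{1},\dots,\mathfrak{p}_{k}$ for the minimal primes over $\mathfrak{a}$, I would use the standard identities
\[
\height(\mathfrak{a}) = \min_{i} \height(\mathfrak{p}_{i}), \qquad \dim(R/\mathfrak{a}) = \max_{i} \dim(R/\mathfrak{p}_{i}),
\]
which follow from the definitions together with the fact that the minimal primes over $\mathfrak{a}$ correspond precisely to the irreducible components of $\Spec(R/\mathfrak{a})$. This reduces the problem to showing that $\height(\mathfrak{p}) + \dim(R/\mathfrak{p}) = n$ for every prime $\mathfrak{p} \subseteq R$.

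For this prime-ideal version, I would invoke the classical fact that $R = \mathbb{K}[x_{1},\dots,x_{n}]$ is a finitely generated $\mathbb{K}$-algebra of Krull dimension $n$ which is catenary (in fact Cohen--Macaulay), so any two maximal chains of primes through $\mathfrak{p}$ have the same length. Equivalently, by Noether normalization, the Krull dimension of the integral domain $R/\mathfrak{p}$ equals the transcendence degree of its fraction field over $\mathbb{K}$, while $\height(\mathfrak{p})$ equals the length of any maximal saturated chain from $(0)$ up to $\mathfrak{p}$. Concatenating such a chain with a maximal chain from $\mathfrak{p}$ to a maximal ideal produces a chain of total length $n$, and catenarity ensures no length is lost in the splice, which yields $\height(\mathfrak{p}) + \dim(R/\mathfrak{p}) = n$.

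Combining the two steps: pick the minimal prime $\mathfrak{p}_{i^{\ast}}$ of smallest height. Then $\dim(R/\mathfrak{p}_{i^{\ast}}) = n - \height(\mathfrak{p}_{i^{\ast}})$ is automatically the maximum of $n - \height(\mathfrak{p}_{i})$ over $i$, hence equals $\dim(R/\mathfrak{a})$. This gives $\dim(R/\mathfrak{a}) + \height(\mathfrak{a}) = n$. The only non-formal ingredient is the catenary/equidimensional nature of the polynomial ring; this is the one subtlety worth flagging, since the analogous formula fails in a general Noetherian ring, but it is a standard result in commutative algebra (see, e.g., \emph{Matsumura} or \emph{Eisenbud}).
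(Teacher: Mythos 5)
Your proposal is correct and follows essentially the same route as the paper: both reduce to the classical identity $\height(\mathfrak{p})+\dim(R/\mathfrak{p})=n$ for primes of $\mathbb{K}[x_1,\dots,x_n]$ (which the paper simply cites from Matsumura/Hartshorne, while you sketch it via Noether normalization and catenarity) and then pass to a general ideal through its minimal primes. The only cosmetic difference is that the paper sandwiches using a single minimal prime of minimal height, whereas you apply the prime formula to all minimal primes and take the max/min, which amounts to the same argument.
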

\begin{proof}
Let $\mathfrak{p}$ be a minimal prime ideal containing $\mathfrak{a}$ such that $\height( \mathfrak{p} ) = \height( \mathfrak{a}  )$.  Then we have 
\[
n = \dim \left( R/\mathfrak{p} \right) + \height \left( \mathfrak{p} \right)  \le \dim \left( R/\mathfrak{a} \right) + \height \left( \mathfrak{a} \right) \le n,
\]
where the equality follows from \cite[Chapter~5,  \S~14]{Matsumura70} or \cite[Theorem~1.8A]{Hartshorne77} and the two inequalities are obtained by definition.
\end{proof}

In the sequel,  we will need the following alternative characterization of regular sequences in polynomial rings.   We recall that polynomial rings are Cohen-Macaulay \cite[Exercise~2.1.17]{bruns1998cohen}.  For the definition and basic properties of Cohen-Macaulay rings,  one may refer to \cite[Section 16]{Matsumura70} or \cite[Section~2]{bruns1998cohen}.
\begin{lemma}[Regular sequence in polynomial ring]\label{lem:regular}
The polynomials $f_{1},\dots,  f_{m}\in \mathbb{K}[x_1,\dots,  x_n]$ form a regular sequence if and only if $\mathfrak{a} \coloneqq (f_{1},\cdots f_{m})$ is proper and $\height(\mathfrak{a} )=m$. 
\end{lemma}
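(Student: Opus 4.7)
The plan is to prove both directions by exploiting the fact, already noted in the excerpt, that $R := \mathbb{K}[x_1,\dots,x_n]$ is Cohen-Macaulay, together with Krull's principal ideal theorem (Theorem~\ref{thm:principal ideal}) and the standard fact that in a Cohen-Macaulay ring the set of zero-divisors coincides with the union of the minimal primes (since the associated primes of $R$ equal the minimal primes).

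For the forward direction, assume $f_1,\dots,f_m$ is a regular sequence. Then $\mathfrak{a}$ is proper: otherwise $1 \in (f_1,\dots,f_m)$ would give $\bar{f}_m$ a unit in $R/(f_1,\dots,f_{m-1})$, contradicting non-zero-divisor status. By Theorem~\ref{thm:principal ideal}, $\height(\mathfrak{a}) \le m$. For the matching lower bound, I would argue inductively that $\height((f_1,\dots,f_i)) \ge i$ for each $i \le m$: given the claim for $i-1$, regularity of the sequence means $f_i$ is a non-zero-divisor modulo $(f_1,\dots,f_{i-1})$, hence $f_i$ lies in no associated (in particular, no minimal) prime of that ideal; thus every minimal prime over $(f_1,\dots,f_i)$ strictly contains some minimal prime over $(f_1,\dots,f_{i-1})$, forcing a strict increase in height.

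For the backward direction, assume $\mathfrak{a}$ is proper with $\height(\mathfrak{a}) = m$; I will show inductively that $f_1,\dots,f_j$ is a regular sequence for every $j \le m$. First, since adjoining one generator raises height by at most $1$ (Theorem~\ref{thm:principal ideal}), we have $\height((f_1,\dots,f_j)) \ge \height(\mathfrak{a}) - (m-j) = j$, while the reverse inequality is again Krull, so $\height((f_1,\dots,f_j)) = j$ for every $j$. At the inductive step, the hypothesis that $f_1,\dots,f_{j-1}$ is regular combined with Cohen-Macaulayness of $R$ gives that $R/(f_1,\dots,f_{j-1})$ is again Cohen-Macaulay. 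In this quotient, I claim $\bar{f}_j$ has height $1$: by Lemma~\ref{lem:dim+ht}, $\dim(R/(f_1,\dots,f_{j-1})) = n-(j-1)$ and $\dim(R/(f_1,\dots,f_j)) = n-j$, so the ideal $(\bar{f}_j)$ in $R/(f_1,\dots,f_{j-1})$ has height exactly $1$. Consequently $\bar{f}_j$ lies outside every minimal prime of the quotient, and since the quotient is Cohen-Macaulay the zero-divisors are exactly the union of those minimal primes, so $\bar{f}_j$ is a non-zero-divisor. This closes the induction.

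The main obstacle, and the only step that is more than bookkeeping, is the height computation of $\bar{f}_j$ in the quotient; everything else is a direct chain of Krull, Cohen-Macaulay and Lemma~\ref{lem:dim+ht}. I prefer to route that step through Lemma~\ref{lem:dim+ht} rather than through prime correspondence with an unmixedness argument, because the dimension formula applies uniformly to \emph{any} ideal and avoids invoking the stronger unmixedness theorem; it only uses what the excerpt already records.
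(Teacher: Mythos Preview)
Your approach is essentially the paper's: both prove the forward direction via Krull plus a height-drop argument, and both prove the backward direction by induction, using the Cohen--Macaulay property to pass from ``$f_{j}$ avoids every minimal prime of $(f_1,\dots,f_{j-1})$'' to ``$f_j$ is a non-zero-divisor modulo $(f_1,\dots,f_{j-1})$.'' The paper phrases that last step as unmixedness of $(f_1,\dots,f_{j-1})$ in $R$; you phrase it as $\mathrm{Ass}=\mathrm{Min}$ in the CM quotient. These are the same fact, so you are not actually avoiding the unmixedness theorem.

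Two small slips are worth flagging. First, your properness argument is wrong: a unit is certainly a non-zero-divisor, so ``$\bar f_m$ a unit'' does not contradict regularity; properness is simply part of the definition of a regular sequence. Second, Lemma~\ref{lem:dim+ht} is stated for ideals of $R$, not of the quotient $S=R/(f_1,\dots,f_{j-1})$, so deducing $\height_S((\bar f_j))=1$ from the two dimension computations needs one more line: the equality $\height_S(\,\cdot\,)+\dim=\dim S$ requires $S$ to be equidimensional, which you get because Krull forces every minimal prime over $(f_1,\dots,f_{j-1})$ to have height exactly $j-1$. Alternatively (and more directly, as the paper does) lift to $R$: if $\bar f_j$ lay in some minimal prime $\mathfrak P/(f_1,\dots,f_{j-1})$ of $S$, then $(f_1,\dots,f_j)\subseteq\mathfrak P$ with $\height(\mathfrak P)\le j-1$, contradicting $\height((f_1,\dots,f_j))=j$.
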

\begin{proof}
Suppose $f_1,\dots,  f_m$ is a regular sequence.  Then Theorem~\ref{thm:principal ideal} together with \cite[Proposition~1.2.14]{bruns1998cohen} implies $\height(\mathfrak{a}) = m$.  Conversely,  if $\mathfrak{a} = (f_1,\dots,  f_m)$ is a proper ideal with $\height(\mathfrak{a}) = m$,  then we prove that $f_1,\dots,  f_m$ is a regular sequence by induction on $m$.  The claim is true for $m  = 1$ since $\mathbb{K}[x_1,\dots,  x_n]$ is an integral domain.  Assume the claim is true for $m  = k$ and we prove for $m = k + 1$.  According to Theorem~\ref{thm:principal ideal},  we must have $\height(\mathfrak{b})  = k$ where $\mathfrak{b} \coloneqq (f_1,\dots,  f_k)$.  The induction hypothesis implies that $f_1,\dots,  f_k$ is a regular sequence.  Since the polynomial ring is Cohen-Macaulay, $\mathfrak{b}$ is unmixed by \cite[Theorem~2.16]{bruns1998cohen}.  Therefore,  $f_{k+1}$ does not lie in any minimal prime of $\mathfrak{b}$ as $\height(\mathfrak{a}) > \height(\mathfrak{b})$.  In particular,  $f_1, \dots,  f_{k+1}$ is a regular sequence.
\end{proof}

Next,  we establish Lemmas~\ref{long regular sequence-1}--\ref{uncover},  which will be used to prove Propositions~\ref{long regular sequence-2} and \ref{linear extended}.  
\begin{lemma}\label{long regular sequence-1} 
Suppose that $\mathbb{K}$ is an infinite field.  Let $f_1,\dots,  f_m \in \mathbb{K}[x_1,\dots,  x_n]$ be polynomials such that $\mathfrak{a} \coloneqq (f_1,\dots,  f_m)$ is a proper ideal of height $s$.  There exist $g_1,\dots,  g_s \in \spa_{\mathbb{K}} \lbrace f_1,\dots,  f_m \rbrace$ such that $\height( \mathfrak{b} ) = s$ where $\mathfrak{b} = (g_1,\dots ,g_s)$.
\end{lemma}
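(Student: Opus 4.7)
The plan is to build the sequence $g_1,\dots,g_s$ one at a time by a linear-algebraic prime-avoidance argument. Suppose by induction that $g_1,\dots,g_i \in \spa_{\mathbb{K}}\{f_1,\dots,f_m\}$ have been chosen with $\height(g_1,\dots,g_i) = i$ for some $i<s$; the base case $i=0$ (empty sequence, zero ideal of height $0$) is trivial. I want to produce $g_{i+1} \in \spa_{\mathbb{K}}\{f_1,\dots,f_m\}$ with $\height(g_1,\dots,g_{i+1}) = i+1$. The upper bound $\le i+1$ is automatic from Theorem~\ref{thm:principal ideal}, so the real task is to pick $g_{i+1}$ outside every minimal prime of $\mathfrak{b}_i := (g_1,\dots,g_i)$.

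First, I would establish that every minimal prime $\mathfrak{p}$ of $\mathfrak{b}_i$ has height exactly $i$: one has $\height(\mathfrak{p}) \ge \height(\mathfrak{b}_i) = i$ by definition of ideal height, while $\height(\mathfrak{p}) \le i$ follows from Theorem~\ref{thm:principal ideal} applied in the localization at $\mathfrak{p}$, where $\mathfrak{p} R_{\mathfrak{p}}$ is minimal over the $i$-generated ideal $\mathfrak{b}_i R_{\mathfrak{p}}$. Let $\mathfrak{p}_1,\dots,\mathfrak{p}_t$ enumerate these finitely many minimal primes. Since $\height(\mathfrak{a}) = s > i = \height(\mathfrak{p}_j)$, the hypothesis forces $\mathfrak{a} \not\subseteq \mathfrak{p}_j$ for every $j$.

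For each $j$ introduce the coefficient subspace
\[
V_j := \{(c_1,\dots,c_m) \in \mathbb{K}^m : c_1 f_1 + \cdots + c_m f_m \in \mathfrak{p}_j\},
\]
which is the kernel of the $\mathbb{K}$-linear map $\mathbb{K}^m \to \mathbb{K}[x_1,\dots,x_n]/\mathfrak{p}_j$ sending $(c_1,\dots,c_m)$ to the class of $\sum_k c_k f_k$. Because $\mathfrak{a} \not\subseteq \mathfrak{p}_j$, that map is nonzero, so $V_j \subsetneq \mathbb{K}^m$. Since $\mathbb{K}$ is infinite, $\mathbb{K}^m$ cannot be covered by the finitely many proper subspaces $V_1,\dots,V_t$, so there is some $(c_1,\dots,c_m) \in \mathbb{K}^m \setminus \bigcup_j V_j$. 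Setting $g_{i+1} := \sum_k c_k f_k$ gives an element of $\spa_{\mathbb{K}}\{f_1,\dots,f_m\}$ that avoids every minimal prime of $\mathfrak{b}_i$, forcing $\height(\mathfrak{b}_i + (g_{i+1})) \ge i+1$; combining with Theorem~\ref{thm:principal ideal} gives equality. Iterating $s$ times produces the required $g_1,\dots,g_s$.

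The only delicate point is the uniform-height claim for the minimal primes of $\mathfrak{b}_i$, which I would handle by the localization argument above (no appeal to Cohen–Macaulayness is needed, though Lemma~\ref{lem:regular} could alternatively be invoked together with unmixedness). Once that is in hand, the key mechanism replacing ordinary prime avoidance is the standard fact that a vector space over an infinite field is not a finite union of proper subspaces, which is exactly what lets the avoidance be carried out \emph{linearly} inside $\spa_{\mathbb{K}}\{f_1,\dots,f_m\}$.
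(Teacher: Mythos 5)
Your proof is correct, but it takes a different route from the paper. The paper disposes of this lemma in two lines by citing an external result (\cite[Corollary~3.2]{adiprasito2021schmidt}) together with the dimension--height formula of Lemma~\ref{lem:dim+ht}: that corollary directly produces $g_1,\dots,g_s\in\spa_{\mathbb{K}}\{f_1,\dots,f_m\}$ with $\dim(R/\mathfrak{b})=\dim(R/\mathfrak{a})$, and the height statement follows. You instead give a self-contained induction: at each step you use the localized form of Krull's height theorem (your localization argument correctly upgrades Theorem~\ref{thm:principal ideal} from a bound on $\height(\mathfrak{b}_i)$ to a bound on every minimal prime of $\mathfrak{b}_i$), conclude that $\mathfrak{a}$ lies in none of these finitely many minimal primes, and then perform prime avoidance \emph{linearly} inside the coefficient space $\mathbb{K}^m$, using that a vector space over an infinite field is not a finite union of proper subspaces. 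All the steps check out, including the final point that an element avoiding every minimal prime of $\mathfrak{b}_i$ raises the height by exactly one. What your argument buys is transparency and self-containedness: it isolates exactly where the infinite-field hypothesis enters, and it runs in the same spirit as the paper's own proof of the large-finite-field analogue, Proposition~\ref{long regular sequence-2}, where Lemma~\ref{uncover} plays the role of your subspace-covering fact (there the number of minimal primes must additionally be bounded via Lemma~\ref{min prime bound}, which is unnecessary over an infinite field). What the paper's citation-based proof buys is brevity and consistency with the toolkit it reuses elsewhere.
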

\begin{proof}
Denote $V \coloneqq \spa_{\mathbb{K}} \{f_1,\dots,  f_m\}$ and $R \coloneqq \mathbb{K}[x_1,\dots,  x_n]$.  According to \cite[Corollary~3.2]{adiprasito2021schmidt} and Lemma~\ref{lem:dim+ht},  there exist $g_1,\dots,  g_s \in V$ such that $\dim ( R /\mathfrak{b}) = \dim ( R /\mathfrak{a}) = n - s$,  where $\mathfrak{b} = (g_1,\dots,  g_s)$.  By Lemma~\ref{lem:dim+ht} again,  we have $\height(\mathfrak{b}) = s$.
\end{proof}

Let $\mathbb{K}$ be a field and let $\mathfrak{a}$ be an ideal of $\mathbb{K}[x_1,\dots,  x_n]$.  We define 
\begin{equation}\label{eq:min-ideal}
\Min(\mathfrak{a}) \coloneqq \{ \mathfrak{p}: \text{minimal prime ideal over $\mathfrak{a}$} \}, \quad 
\min(\mathfrak{a}) \coloneqq \{ \mathfrak{p}\in \Min(\mathfrak{a}): \height(\mathfrak{p}) = \height(\mathfrak{a}) \}.
\end{equation}
Moreover,  we denote 
\[
\mathrm{Z}(\mathfrak{a}) \coloneqq \left\lbrace  
x \in \overline{\mathbb{K}}^n: f(x) = 0,\; f\in \mathfrak{a}
\right\rbrace.
\]
Recall that $\deg \mathrm{Z}(\mathfrak{a}) \coloneqq \sum_{j=1}^m \deg \mathrm{Z}_j$,  where $\mathrm{Z}_1,\dots,  \mathrm{Z}_m$ are irreducible components of $\mathrm{Z}(\mathfrak{a})$.
\begin{lemma}\label{simple observation}
Let $\mathbb{K}$ be a field and let $\mathfrak{a}$ be an ideal of $\mathbb{K}[x_1,\dots,  x_n]$.  We denote by $\mathfrak{a}^{\overline{\mathbb{K}}} = \mathfrak{a} \otimes_{\mathbb{K}} \overline{\mathbb{K}}$.  Then we have the following:
    \begin{enumerate}[label=(\alph*)]
    \item $\height(\mathfrak{a}^{\overline{K}}) = \height(\mathfrak{a})$.  \label{simple observation:item1}
    \item For any ideal $\mathfrak{b} \subseteq \mathfrak{a}$ of $\mathbb{K}[x_1,\dots,  x_n]$ such that $\height(\mathfrak{b})= \height( \mathfrak{a})$,  we have $\min( \mathfrak{a} ) \subseteq \min( \mathfrak{b} )$.   \label{simple observation:item2}
       \item $\lvert \min( \mathfrak{a} )\rvert \le \left\lvert \min( \mathfrak{a}^{\overline{\mathbb{K}}} ) \right\rvert \le \left\lvert \Min( \mathfrak{a}^{\overline{\mathbb{K}}} ) \right\rvert \le \deg \mathrm{Z}(\mathfrak{a})$.   \label{simple observation:item3}
        
    \end{enumerate}
\end{lemma}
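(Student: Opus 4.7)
The plan is to dispatch the three assertions in order, leveraging faithfully flat base change from $R \coloneqq \mathbb{K}[x_1,\ldots,x_n]$ to $R^{\overline{\mathbb{K}}} \coloneqq \overline{\mathbb{K}}[x_1,\ldots,x_n]$ together with the dimension--height identity of Lemma~\ref{lem:dim+ht}. Throughout, I would use the elementary fact (no ring-theoretic hypotheses needed) that a strict containment of primes strictly increases the height, as this is the hammer that collapses all height-comparison arguments.

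For \emph{(a)}, I would observe that $\mathbb{K}\to\overline{\mathbb{K}}$ is faithfully flat, and that the Krull dimension of a finitely generated $\mathbb{K}$-algebra is preserved by extension of scalars to $\overline{\mathbb{K}}$ (most cleanly via Noether normalization: a Noether normalization of $R/\mathfrak{a}$ base-changes to one of $R^{\overline{\mathbb{K}}}/\mathfrak{a}^{\overline{\mathbb{K}}} \cong (R/\mathfrak{a}) \otimes_{\mathbb{K}} \overline{\mathbb{K}}$). Hence $\dim(R/\mathfrak{a}) = \dim(R^{\overline{\mathbb{K}}}/\mathfrak{a}^{\overline{\mathbb{K}}})$, and applying Lemma~\ref{lem:dim+ht} on both sides yields $\height(\mathfrak{a}) = \height(\mathfrak{a}^{\overline{\mathbb{K}}})$.

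For \emph{(b)}, I plan a direct chain argument. Given $\mathfrak{p} \in \min(\mathfrak{a})$, the inclusion $\mathfrak{b} \subseteq \mathfrak{a} \subseteq \mathfrak{p}$ provides a $\mathfrak{q} \in \Min(\mathfrak{b})$ with $\mathfrak{q} \subseteq \mathfrak{p}$. Then $\height(\mathfrak{b}) \le \height(\mathfrak{q}) \le \height(\mathfrak{p}) = \height(\mathfrak{a}) = \height(\mathfrak{b})$ forces all four heights to coincide, and the no-strict-containment principle rules out $\mathfrak{q} \subsetneq \mathfrak{p}$, so $\mathfrak{p} = \mathfrak{q} \in \min(\mathfrak{b})$.

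Part \emph{(c)} is where the bulk of the work lies. The middle inequality is tautological from the definitions in \eqref{eq:min-ideal}. For the rightmost inequality, I would invoke the Nullstellensatz: $\Min(\mathfrak{a}^{\overline{\mathbb{K}}})$ is in bijection with the irreducible components of $\mathrm{Z}(\mathfrak{a})$, each of which contributes at least $1$ to $\deg \mathrm{Z}(\mathfrak{a}) = \sum_j \deg \mathrm{Z}_j$. The main obstacle is the leftmost inequality, which I plan to establish by showing that the contraction map $\Phi : \min(\mathfrak{a}^{\overline{\mathbb{K}}}) \to \min(\mathfrak{a})$, $\mathfrak{P} \mapsto \mathfrak{P} \cap R$, is well-defined and surjective. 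Well-definedness uses going-down for the flat extension $R \hookrightarrow R^{\overline{\mathbb{K}}}$ together with \emph{(a)}: lifting any saturated chain below $\mathfrak{P} \cap R$ to a chain in $R^{\overline{\mathbb{K}}}$ below $\mathfrak{P}$ gives $\height(\mathfrak{a}) \le \height(\mathfrak{P} \cap R) \le \height(\mathfrak{P}) = \height(\mathfrak{a}^{\overline{\mathbb{K}}}) = \height(\mathfrak{a})$, so $\mathfrak{P} \cap R$ sits in $\min(\mathfrak{a})$ by the argument of \emph{(b)}. For surjectivity, given $\mathfrak{p} \in \min(\mathfrak{a})$, I would pick a minimal prime $\mathfrak{P}$ over $\mathfrak{p}^{\overline{\mathbb{K}}}$ achieving the height $\height(\mathfrak{p}^{\overline{\mathbb{K}}}) = \height(\mathfrak{p}) = \height(\mathfrak{a})$ (such a $\mathfrak{P}$ exists since $\Min$ is finite in Noetherian rings); this $\mathfrak{P}$ then lies in $\min(\mathfrak{a}^{\overline{\mathbb{K}}})$, and one final comparison of heights forces $\Phi(\mathfrak{P}) = \mathfrak{p}$. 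The delicate point throughout is juggling the four height invariants $\height(\mathfrak{a})$, $\height(\mathfrak{a}^{\overline{\mathbb{K}}})$, $\height(\mathfrak{p})$, $\height(\mathfrak{P})$ and ensuring at each step that flatness (via going-down) produces the correct inequality direction.
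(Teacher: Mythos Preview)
Your argument is correct, and the overall architecture---especially for \emph{(c)}, where you build a surjection $\min(\mathfrak{a}^{\overline{\mathbb{K}}}) \twoheadrightarrow \min(\mathfrak{a})$ via contraction---matches the paper's. The genuine difference is in the underlying commutative-algebra tool. The paper treats $R \hookrightarrow \overline{R}$ as an \emph{integral} extension with integrally closed base: this gives both going-up and going-down, and via \cite[Corollary~8.14]{kemper2011course} the exact equality $\height(\mathfrak{P}) = \height(\mathfrak{P}\cap R)$ for every prime $\mathfrak{P}\subseteq\overline{R}$, from which \emph{(a)} and the well-definedness/surjectivity of $\Phi$ fall out by direct height bookkeeping. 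You instead treat $R \hookrightarrow \overline{R}$ as a \emph{faithfully flat} extension: for \emph{(a)} you bypass heights entirely by invoking Noether normalization to preserve $\dim(R/\mathfrak{a})$ under base change and then apply Lemma~\ref{lem:dim+ht}, while for \emph{(c)} you use going-down for flat maps to get only the inequality $\height(\mathfrak{P}\cap R)\le\height(\mathfrak{P})$, which you then squeeze against $\height(\mathfrak{a})$. Your route is arguably more self-contained (it avoids citing the height-preservation result for integral extensions), whereas the paper's route yields the sharper height equality as a reusable byproduct. Both are standard and equally valid here.
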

\begin{proof}
We first prove \ref{simple observation:item2}.  For any $\mathfrak{p} \in \min( \mathfrak{a} )$,  we notice that $\mathfrak{b}\subseteq  \mathfrak{a} \subseteq \mathfrak{p}$ and $\height (\mathfrak{b} )= \height( \mathfrak{a} )= \height( \mathfrak{p} )$.  This implies $\mathfrak{p} \in \min ( \mathfrak{b} )$ and $\min( \mathfrak{a} )\subseteq \min( \mathfrak{b} )$.

For \ref{simple observation:item1} and \ref{simple observation:item3},  we denote $R \coloneqq \mathbb{K}[x_1,\dots,  x_n]$ and $\overline{R} \coloneqq \overline{\mathbb{K}}[x_1,\dots,  x_n]$.  By definition,  we have $\mathfrak{a}^{\overline{\mathbb{K}}} = \mathfrak{a} \overline{ R}$.  The last two inequalities in \ref{simple observation:item3} are obvious,  so it is sufficient to prove \ref{simple observation:item1} and $\left\lvert \min( \mathfrak{a} )\right\rvert \le \left\lvert \min( \mathfrak{a} \overline{ R} ) \right\rvert$.  We observe that $ \overline{R}$ is an integral extension of $R$ and $R$ is integrally closed in its fraction field.  Thus,  both the going-up and the going-down theorems hold for the extension $R \subseteq  \overline{ R}$ \cite[Theorem~9.4]{Matsumura86}.  According to \cite[Corollary~8.14]{kemper2011course},  for any prime ideal $\mathfrak{P} \subseteq \overline{R}$,  we have $\height( \mathfrak{P}  ) = \height( \mathfrak{P} \cap R)$.

Given any $\mathfrak{P} \in \min( \mathfrak{a} \overline{R})$,  it holds that $\height(\mathfrak{a}) \le \height( \mathfrak{P} \cap R ) = \height(\mathfrak{P}) = \height( \mathfrak{a} \overline{R} )$.  On the other side,  for any $\mathfrak{p} \in \min (\mathfrak{a})$,  there exists a prime ideal $\mathfrak{P}_0 \subseteq \overline{R}$ containing $\mathfrak{a} \overline{R }$ such that $\mathfrak{P}_0 \cap R= \mathfrak{p}$ by \cite[Theorem~8.12]{kemper2011course}.  Since $\mathfrak{a} \overline{R} \subseteq \mathfrak{p} \overline{R} \subseteq \mathfrak{P}_0$,  we conclude that $\height( \mathfrak{a} \overline{R}  ) \le \height( \mathfrak{P}_0 ) = \height( \mathfrak{p} ) = \height( \mathfrak{a} )$.  Therefore,  $\height( \mathfrak{a}) = \height(\mathfrak{a} \overline{R})$ for any ideal $\mathfrak{a} \subseteq R$ and this proves \ref{simple observation:item1}.

We claim that for any $\mathfrak{p} \in \min(\mathfrak{a})$,  there is some $\mathfrak{P} \in \min( \mathfrak{a} \overline{R})$ such that $\mathfrak{p} = \mathfrak{P} \cap R$.  This implies $\left\lvert \min (\mathfrak{a}) \right\rvert \le  \left\lvert \min (\mathfrak{a} \overline{R}) \right\rvert$.  To prove the claim,  we take $\mathfrak{P} \in \min( \mathfrak{p} \overline{R})$.  We notice that $\mathfrak{P} \in \min( \mathfrak{a} \overline{R})$,  as $\height( \mathfrak{a} \overline{R})= \height( \mathfrak{a} )= \height( \mathfrak{p} )= \height( \mathfrak{p}\overline{R})= \height ( \mathfrak{P})$.  Moreover,  since $\height( \mathfrak{P} \cap R ) = \height(\mathfrak{P}) = \height(\mathfrak{p})$ and $\mathfrak{p} \subseteq \mathfrak{P} \cap R$,  the going-down theorem forces the equality $\mathfrak{p} = \mathfrak{P} \cap R$.  
\end{proof}

\begin{lemma}\label{min prime bound}
Suppose $ \mathfrak{a} = (f_{1},\dots,f_{m}) \subseteq \mathbb{K}[x_{1},\dots,x_{n}]$.  If $\height( \mathfrak{a} )= s$ and $\deg(f_{i}) \le d$ for each $1 \le i \le m$,  then $\left\lvert \min( \mathfrak{a} ) \right\rvert\le d^s$.
\end{lemma}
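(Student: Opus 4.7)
The plan is to reduce the counting of minimal primes to a Bezout-type degree estimate after cutting the generating set down to a regular sequence. Since everything on the right-hand side is insensitive to base change, I would first pass to the algebraic closure. By Lemma~\ref{simple observation}\ref{simple observation:item1}, $\height(\mathfrak{a}^{\overline{\mathbb{K}}}) = \height(\mathfrak{a}) = s$, and by Lemma~\ref{simple observation}\ref{simple observation:item3}, $|\min(\mathfrak{a})| \le |\min(\mathfrak{a}^{\overline{\mathbb{K}}})|$. Thus it suffices to work over $\overline{\mathbb{K}}$, which is infinite, and establish $|\min(\mathfrak{a}^{\overline{\mathbb{K}}})| \le d^s$.

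Next I would trim the generating set to a complete intersection. Applying Lemma~\ref{long regular sequence-1} to $f_1,\dots,f_m$ (now viewed over the infinite field $\overline{\mathbb{K}}$), I obtain $g_1,\dots,g_s \in \spa_{\overline{\mathbb{K}}}\{f_1,\dots,f_m\}$ such that the ideal $\mathfrak{b} \coloneqq (g_1,\dots,g_s) \subseteq \mathfrak{a}^{\overline{\mathbb{K}}}$ satisfies $\height(\mathfrak{b}) = s$, and each $g_i$ is a $\overline{\mathbb{K}}$-linear combination of the $f_j$, so $\deg(g_i) \le d$. By Lemma~\ref{simple observation}\ref{simple observation:item2}, $\min(\mathfrak{a}^{\overline{\mathbb{K}}}) \subseteq \min(\mathfrak{b})$, which reduces the problem to bounding $|\min(\mathfrak{b})|$. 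By Lemma~\ref{lem:regular}, $g_1,\dots,g_s$ is a regular sequence, so $\mathfrak{b}$ is a complete intersection; since the polynomial ring is Cohen--Macaulay, $\mathfrak{b}$ is unmixed and hence $\min(\mathfrak{b}) = \Min(\mathfrak{b})$, i.e., every minimal prime of $\mathfrak{b}$ has height exactly $s$ and corresponds bijectively to an irreducible component of $\mathrm{Z}(\mathfrak{b})$.

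Finally I would invoke Bezout. Each hypersurface $\mathrm{Z}(g_i) \subseteq \overline{\mathbb{K}}^n$ has degree at most $d$, and $\mathrm{Z}(\mathfrak{b}) = \mathrm{Z}(g_1) \cap \cdots \cap \mathrm{Z}(g_s)$. Iterating Proposition~\ref{gen-Bezout} yields $\deg \mathrm{Z}(\mathfrak{b}) \le d^s$, and since each irreducible component contributes at least $1$ to the degree, $|\Min(\mathfrak{b})| \le \deg \mathrm{Z}(\mathfrak{b}) \le d^s$. Chaining the inequalities $|\min(\mathfrak{a})| \le |\min(\mathfrak{a}^{\overline{\mathbb{K}}})| \le |\min(\mathfrak{b})| = |\Min(\mathfrak{b})| \le d^s$ finishes the proof.

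No single step is a serious obstacle: the ingredients (base change for heights and minimal primes, the Cohen--Macaulay unmixedness of complete intersections, and Bezout) have all been collected above. The only point needing a little care is verifying that trimming to $s$ generators can be done with elements still of degree $\le d$, which is automatic because $g_1,\dots,g_s$ are taken in the linear span of the $f_i$.
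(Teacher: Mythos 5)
Your proposal is correct and follows essentially the same route as the paper: pass to $\overline{\mathbb{K}}$ via Lemma~\ref{simple observation}, trim to $s$ generators of degree at most $d$ using Lemma~\ref{long regular sequence-1}, and then bound the number of minimal primes by $\deg \mathrm{Z}(\mathfrak{b}) \le d^s$ via Proposition~\ref{gen-Bezout}. The only difference is your detour through Cohen--Macaulay unmixedness to get $\min(\mathfrak{b}) = \Min(\mathfrak{b})$, which is harmless but unnecessary since Lemma~\ref{simple observation}\ref{simple observation:item3} already gives $\lvert \min(\mathfrak{b}) \rvert \le \lvert \Min(\mathfrak{b}) \rvert \le \deg \mathrm{Z}(\mathfrak{b})$ directly.
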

\begin{proof}
By Lemma~\ref{simple observation},  we may assume that $\mathbb{K}$ is algebraically closed.  According to Lemma~\ref{long regular sequence-1}, there exist $g_{1},\cdots,g_{s} \in \spa_{\mathbb{K}}(f_{1},\cdots,f_{m})$ such that $\height( \mathfrak{b} )=s$,  where $\mathfrak{b} \coloneqq (g_{1},\cdots,g_{s})$.  Since $\mathfrak{b} \subseteq \mathfrak{a}$,  Lemma~\ref{simple observation} together with Lemma~\ref{gen-Bezout} implies that $\left\lvert \min( \mathfrak{a} ) \right\rvert \le \left\lvert \min(J) \right\rvert \le \deg( \mathrm{Z} ( \mathfrak{b} ) )\le d^{h}$.
\end{proof}   

In the sequel,  we will repeatedly invoke the following lemma.
\begin{lemma}\label{uncover}
Suppose $\mathbb{K}$ is a field with $\vert\mathbb{K}\vert\ge m+1$.  If  $S_1,\dots,  S_m$ are subsets of $\mathbb{K}^n$ such that $\bigcup_{i=1}^{m}S_{i}=\mathbb{K}^{n}$,  then $S_t$ contains a basis of $\mathbb{K}^n$ for some $1\le t\le m$.  
\end{lemma}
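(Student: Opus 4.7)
The plan is to reduce the statement to the classical fact that a vector space over a field of size at least $m+1$ cannot be written as a union of $m$ proper linear subspaces. Concretely, I would argue by contradiction. Suppose no $S_t$ contains a basis of $\mathbb{K}^n$. For each $t\in[m]$, set $V_t \coloneqq \spa_{\mathbb{K}}(S_t)$. The assumption that $S_t$ contains no $n$ linearly independent vectors is equivalent to $\dim V_t < n$, so each $V_t$ is a proper linear subspace of $\mathbb{K}^n$. Since $S_t \subseteq V_t$ and $\bigcup_{t=1}^m S_t = \mathbb{K}^n$, we obtain $\bigcup_{t=1}^m V_t = \mathbb{K}^n$.

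The remaining task is to show that this forces $|\mathbb{K}| \le m$, contradicting the hypothesis $|\mathbb{K}| \ge m+1$. I would prove this covering statement by induction on $m$. The case $m=1$ is trivial. For the inductive step, one may delete any $V_t$ contained in the union of the others and assume no $V_t$ is redundant; pick $a \in V_1 \setminus \bigcup_{t\ge 2} V_t$ and $b \in \mathbb{K}^n \setminus V_1$. Consider the affine line $L \coloneqq \{a + \lambda b : \lambda \in \mathbb{K}\}$, which has $|\mathbb{K}|$ points. For $\lambda \ne 0$ the point $a+\lambda b$ cannot lie in $V_1$ (otherwise $\lambda b \in V_1$, hence $b \in V_1$, a contradiction), so each such point lies in some $V_t$ with $t \ge 2$. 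Moreover, for each fixed $t \ge 2$, at most one $\lambda$ satisfies $a+\lambda b \in V_t$: two distinct solutions would give both $b \in V_t$ and $a \in V_t$, contradicting the choice of $a$. Counting the $|\mathbb{K}|-1$ nonzero values of $\lambda$ against the at most $m-1$ admissible subspaces yields $|\mathbb{K}|-1 \le m-1$, i.e.\ $|\mathbb{K}|\le m$, the desired contradiction.

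The main step to get right is the translation from the combinatorial hypothesis on subsets to the linear-algebraic statement about subspaces, which hinges on the equivalence between ``$S_t$ contains a basis of $\mathbb{K}^n$'' and ``$\spa_{\mathbb{K}}(S_t) = \mathbb{K}^n$''. After this reduction, the remainder is the standard affine-line counting argument and poses no real obstacle; the hypothesis $|\mathbb{K}| \ge m+1$ is in fact slightly stronger than the sharp threshold $|\mathbb{K}| \ge m$ needed by the classical covering lemma, so comfortably suffices.
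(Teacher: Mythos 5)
Your proof is correct, and its first step is exactly the paper's: replace each $S_t$ by $V_t=\spa_{\mathbb{K}}(S_t)$ and observe that ``$S_t$ contains a basis'' is equivalent to $V_t=\mathbb{K}^n$, so the statement reduces to the fact that $\mathbb{K}^n$ is not a union of $m$ proper subspaces when $\lvert\mathbb{K}\rvert\ge m+1$. Where you diverge is in how this covering fact is proved. The paper splits into two cases: for infinite $\mathbb{K}$ it appeals to dimension counting, and for finite $\mathbb{K}$ it uses the one-line cardinality bound $\lvert\mathbb{K}\rvert^{n}\le\sum_{i=1}^{m}\lvert V_i\rvert\le m\lvert\mathbb{K}\rvert^{n-1}<\lvert\mathbb{K}\rvert^{n}$. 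You instead run the classical affine-line avoidance argument (pick $a\in V_1$ outside the other subspaces, $b\notin V_1$, and count intersections of the line $\{a+\lambda b\}$ with the $V_t$), which treats finite and infinite fields uniformly and in fact yields the sharp threshold $\lvert\mathbb{K}\rvert\le m$ (indeed $\le m-1$ with the standard refinement), slightly stronger than what the hypothesis requires. The trade-off is that the paper's case split is shorter and entirely elementary in the finite case, while your argument is a single uniform proof at the cost of the irredundancy reduction and the line-counting step; both are complete and correct.
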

\begin{proof}
By assumption,  we have $\cup_{i=1}^{m} \spa_{\mathbb{K}}( S_{i} )=\mathbb{K}^{n}$.  It is sufficient to prove that $\spa_{\mathbb{K}}( S_t) = \mathbb{K}^n$ for some $1 \le t \le m$.  If $\mathbb{K}$ is an infinite field,  the proof is completed by dimension counting.  Assume that $\mathbb{K}$ is a finite field with $|\mathbb{K}| \ge m+1$ but $\spa_{\mathbb{K}}( S_i) \neq \mathbb{K}^n$ for each $1 \le i \le m$.  Then we may obtain a contradiction: 
\[
\lvert \mathbb{K} \rvert^n =  \lvert\mathbb{K}^{n}\rvert\le\sum_{i=1}^{m} \lvert \spa_{\mathbb{K}}( S_{i})\rvert\le m\lvert \mathbb{K} \rvert^{n-1} < \lvert \mathbb{K} \rvert^{n}.  \qedhere
\]
\end{proof}

With all the technical lemmas prepared,  we can now prove the main results of this section.
\begin{proposition}\label{long regular sequence-2} 
Let $\mathbb{K}$ be a field and let $\mathfrak{a} = (f_{1},\dots,f_{m})\subseteq \mathbb{K}[x_{1},\dots,x_{n}]$ be a proper ideal where $\deg(f_{i})\le d$ for each $1 \le i \le m$.  If $\height(\mathfrak{a}) = s$ and $\left\lvert \mathbb{K} \right\rvert \ge d^{s}+1$,  then there exist $g_{1},\dots,g_{s}\in \spa_{\mathbb{K}}(f_{1},\dots,f_{m})$ such that $\height( \mathfrak{b} )=s$ where $\mathfrak{b} \coloneqq (g_{1},\dots,g_{s})$.
\end{proposition}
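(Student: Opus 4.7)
The plan is to construct $g_{1},\dots,g_{s}\in V\coloneqq\spa_{\mathbb{K}}\{f_{1},\dots,f_{m}\}$ inductively, so that at each stage $k=0,1,\dots,s$ the ideal $\mathfrak{b}_{k}\coloneqq (g_{1},\dots,g_{k})$ is proper and has height exactly $k$. The base case $k=0$ is trivial. For the inductive step, assume $g_{1},\dots,g_{k}$ are chosen with $\height(\mathfrak{b}_{k})=k<s$; I need to produce $g_{k+1}\in V$ such that $\height(\mathfrak{b}_{k+1})=k+1$.

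The key observation is that $\mathfrak{b}_{k}\subseteq \mathfrak{a}$ is proper with $\height(\mathfrak{b}_{k})=k$, so by Lemma~\ref{lem:regular} the sequence $g_{1},\dots,g_{k}$ is regular in the polynomial ring $\mathbb{K}[x_{1},\dots,x_{n}]$. Since polynomial rings over a field are Cohen--Macaulay, $\mathfrak{b}_{k}$ is unmixed: every associated prime of $\mathfrak{b}_{k}$, and in particular every minimal prime, has height exactly $k$. Thus $\Min(\mathfrak{b}_{k})=\min(\mathfrak{b}_{k})$, and Lemma~\ref{min prime bound} yields $|\Min(\mathfrak{b}_{k})|\le d^{k}\le d^{s-1}$. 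Enumerate $\Min(\mathfrak{b}_{k})=\{\mathfrak{p}_{1},\dots,\mathfrak{p}_{r}\}$ with $r\le d^{k}$.

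For each $j$ the height inequality $\height(\mathfrak{p}_{j})=k<s=\height(\mathfrak{a})$ forces $\mathfrak{a}\not\subseteq \mathfrak{p}_{j}$; equivalently (since $\mathfrak{p}_{j}$ is an ideal and $V$ spans $\mathfrak{a}$ as an ideal) $V\not\subseteq \mathfrak{p}_{j}$, so $S_{j}\coloneqq V\cap \mathfrak{p}_{j}$ is a proper $\mathbb{K}$-subspace of $V$. The hypothesis $|\mathbb{K}|\ge d^{s}+1\ge d^{k}+1\ge r+1$ allows me to apply Lemma~\ref{uncover} to the subsets $S_{1},\dots,S_{r}$ of $V$: were $\bigcup_{j}S_{j}=V$, some $S_{t}$ would contain a $\mathbb{K}$-basis of $V$, giving $V\subseteq \mathfrak{p}_{t}$, a contradiction. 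Hence I may pick $g_{k+1}\in V\setminus\bigcup_{j=1}^{r}\mathfrak{p}_{j}$. Since $g_{k+1}$ avoids every minimal prime of $\mathfrak{b}_{k}$, $\height(\mathfrak{b}_{k+1})>k$, while Krull's principal ideal theorem applied in $\mathbb{K}[x_{1},\dots,x_{n}]/\mathfrak{b}_{k}$ gives $\height(\mathfrak{b}_{k+1})\le k+1$. Thus $\height(\mathfrak{b}_{k+1})=k+1$, and the induction proceeds up to $k=s$.

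The main obstacle is ensuring that the minimal primes we must avoid all have height $k$ so that Lemma~\ref{min prime bound} controls their number. Without this, the bound in Lemma~\ref{min prime bound} only counts minimum-height primes, and avoiding the remaining higher-height minimal primes would need an extra argument. The Cohen--Macaulay property of the polynomial ring, via unmixedness of ideals generated by regular sequences, is what closes this gap; for infinite fields no such argument is needed, as Lemma~\ref{long regular sequence-1} already settles the statement directly.
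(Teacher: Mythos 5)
Your proof is correct and follows essentially the same route as the paper: the paper also builds the sequence greedily inside $V=\spa_{\mathbb{K}}\{f_{1},\dots,f_{m}\}$ and uses Lemma~\ref{min prime bound} together with Lemma~\ref{uncover} for the prime-avoidance step, merely organizing it as ``take a maximal regular sequence in $V$ and derive a contradiction if its length is less than $s$'' instead of your step-by-step extension. The only real difference is your appeal to Cohen--Macaulay unmixedness to identify $\Min(\mathfrak{b}_{k})$ with $\min(\mathfrak{b}_{k})$; this is valid but avoidable, since it suffices to avoid the minimum-height minimal primes (as the paper does via Lemma~\ref{simple observation}): any minimal prime of $\mathfrak{b}_{k+1}$ lying over a minimal prime of $\mathfrak{b}_{k}$ of height greater than $k$ already has height at least $k+1$, so only the height-$k$ ones need to be dodged and Lemma~\ref{min prime bound} counts exactly those.
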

\begin{proof}
Denote $V \coloneqq \spa_{\mathbb{K}}(f_{1},\cdots,f_{m})$.  We claim that there exists a finite subset $S = \{ g_1,\dots,  g_t \} \subseteq V$ such that
\begin{enumerate}[label=(\roman*)]
\item $S$ is a regular sequence. \label{long regular sequence-2:item1}
\item For any $g\in V$,  $g\in \mathfrak{p}$ for some $\mathfrak{p}\in \min(\mathfrak{b})$ where $ \mathfrak{b} \coloneqq (g_1,\dots,  g_t)$.\label{long regular sequence-2:item2}
\end{enumerate}
In fact,  we can construct $S$ as follows: Let $g_1$ be any element in $V \setminus \{0\}$.  We denote $S_1 \coloneqq \{g_1\}$ and $\mathfrak{b}_1 \coloneqq (g_1)$.  If there exists $g_2 \in V$ such that $\height(\mathfrak{b}_2) = \height(\mathfrak{b}_1) + 1$,  where $\mathfrak{b}_2 \coloneqq (g_1,g_2)$,  then $S_2 \coloneqq \{g_1,g_2\}$ is a regular sequence.  We continue this procedure to obtain regular sequences $S_3,  S_4,  \dots$.  The procedure stops at $S \coloneqq S_t$ for some $t \le s$,  since $\height(\mathfrak{a}) = s$.  By construction,  for any $g\in V$,  we have $\height( \mathfrak{b}' ) = \height( \mathfrak{b})$ where $\mathfrak{b}' \coloneqq (g_1,\dots,  g_t,  g)$ and $\mathfrak{b} \coloneqq (g_1,\dots,  g_t)$.  By Lemma~\ref{simple observation},  we have $\min (\mathfrak{b}') \subseteq \min (\mathfrak{b})$ and \ref{long regular sequence-2:item2} is satisfied.

It is left to prove that $s \le t$.  According to \ref{long regular sequence-2:item2},  we have $V = \cup_{ \mathfrak{p} \in \min( \mathfrak{b} )}( \mathfrak{p} \cap  V )$.  Since $|S| = t \le s$,  Lemma~\ref{min prime bound} implies that $\left\lvert \min(\mathfrak{b}) \right\rvert \le d^{s} < |\mathbb{K}|$.  By Lemma~\ref{uncover},  there is $\mathfrak{q} \in \min(\mathfrak{b})$ such that $\mathfrak{q} \cap V = V$.  In particular,  we obtain $\mathfrak{a} \subseteq \mathfrak{q}$ which implies $s = \height(\mathfrak{a}) \le \height (\mathfrak{q}) =  \height (\mathfrak{b}) \le t$.  Here,  the last inequality follows from Theorem~\ref{thm:principal ideal}.
\end{proof}
\begin{remark}
If $\mathbb{K}$ is an infinite field,  Proposition~\ref{long regular sequence-2} reduces to Lemma~\ref{long regular sequence-1} immediately.   However,  we want to emphasize that the proof of Proposition~\ref{long regular sequence-2} depends on Lemma~\ref{min prime bound}, which in turn relies on Lemma~\ref{long regular sequence-1}.
\end{remark}

\begin{proposition}\label{linear extended}
Let $\mathbb{K}$ be a field and let $f_{1},\cdots,f_{m} \in \mathbb{K}[x_{1},\cdots,x_{n}]$ be forms satisfying \begin{enumerate}[label=(\roman*)]
\item $d^{m}+1 \le \left\lvert \mathbb{K}\right\rvert$ and $\deg (f_i) \le d$,  $1 \le i \le m$. 
\item $f_{1},\cdots,f_{m}$ form a regular sequence.
\end{enumerate}
Then there exist linear forms $l_{i}$,  $1\le i\le n-m$,  such that $f_{1},\cdots,f_{m}, l_{1},\cdots, l_{n-m}$ is a regular sequence.
\end{proposition}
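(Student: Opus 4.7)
The plan is to construct the linear forms $l_1,\ldots,l_{n-m}$ one at a time by prime avoidance. Assume inductively that $f_1,\ldots,f_m,l_1,\ldots,l_k$ is a regular sequence for some $0\le k<n-m$, and set $\mathfrak{b}_k \coloneqq (f_1,\ldots,f_m,l_1,\ldots,l_k)$, so that $\height(\mathfrak{b}_k)=m+k$ by Lemma~\ref{lem:regular}. Since the polynomial ring is Cohen-Macaulay and $\mathfrak{b}_k$ is generated by a regular sequence, $\mathfrak{b}_k$ is unmixed, and its associated primes coincide with $\min(\mathfrak{b}_k)$. Hence a linear form $l_{k+1}$ extends the regular sequence if and only if $l_{k+1}\notin\mathfrak{p}$ for every $\mathfrak{p}\in\min(\mathfrak{b}_k)$.

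To bound $|\min(\mathfrak{b}_k)|$ uniformly in $k$, I would iterate Bezout's theorem (Proposition~\ref{gen-Bezout}) on the intersection $Z(\mathfrak{b}_k)=Z(f_1)\cap\cdots\cap Z(f_m)\cap Z(l_1)\cap\cdots\cap Z(l_k)$. Because each linear slice contributes a factor of $1$ to the degree estimate, one obtains $\deg Z(\mathfrak{b}_k)\le d^m\cdot 1^k=d^m$, and Lemma~\ref{simple observation}\ref{simple observation:item3} then yields $|\min(\mathfrak{b}_k)|\le d^m$. Crucially, this bound does not grow with $k$, matching the standing hypothesis $|\mathbb{K}|\ge d^m+1$ throughout the induction.

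Let $V\subseteq\mathbb{K}[x_1,\ldots,x_n]$ be the $n$-dimensional space of linear forms. For each $\mathfrak{p}\in\min(\mathfrak{b}_k)$, the subset $\mathfrak{p}\cap V$ cannot contain a basis of $V$: otherwise $\mathfrak{p}$ would contain the irrelevant ideal $(x_1,\ldots,x_n)$, forcing $\height(\mathfrak{p})=n$, in contradiction with $\height(\mathfrak{p})=\height(\mathfrak{b}_k)=m+k<n$. Since $|\min(\mathfrak{b}_k)|\le d^m\le |\mathbb{K}|-1$, Lemma~\ref{uncover} applied to $V$ and the sets $\{\mathfrak{p}\cap V:\mathfrak{p}\in\min(\mathfrak{b}_k)\}$ produces a linear form $l_{k+1}\in V\setminus\bigcup_{\mathfrak{p}\in\min(\mathfrak{b}_k)}\mathfrak{p}$, which is exactly what is needed to continue the induction.

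The main obstacle I foresee is tracking the degree bound carefully. A direct application of Lemma~\ref{min prime bound} to $\mathfrak{b}_k$ with the uniform degree bound $d$ would only give $|\min(\mathfrak{b}_k)|\le d^{m+k}$, which quickly exceeds $|\mathbb{K}|$ and breaks the prime-avoidance step. The iterated Bezout argument above exploits the linearity of the newly added generators to keep the bound at $d^m$ at every inductive step, which is precisely what makes the hypothesis $|\mathbb{K}|\ge d^m+1$ sufficient.
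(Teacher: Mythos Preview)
Your proposal is correct and follows essentially the same approach as the paper: both arguments hinge on the Bezout bound $|\min(\mathfrak{b}_k)|\le d^m$ (exploiting that the added linear generators contribute only a factor~$1$) together with Lemma~\ref{uncover} to avoid all minimal primes with a linear form. The only cosmetic difference is that the paper packages this as ``take a maximal extension $l_1,\dots,l_c$ and show $\height(\mathfrak{a})=n$'', whereas you build the $l_k$ one at a time; the underlying mechanism is identical.
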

\begin{proof}
Let $\{l_1,\dots,  l_c\}$ be a maximal set of linear forms such that $f_1,\dots,  f_m,  l_1,\dots,  l_c$ is a regular sequence,  where $0 \le c \le n-m$.  We claim that $c = n - m$.  It is sufficient to prove $\height(\mathfrak{a}) = n$,  where $\mathfrak{a} \coloneqq (f_1,\dots,  f_m,  l_1,\dots,  l_c)$.  Indeed,  if $c < n - m$,  then by Theorem~\ref{thm:principal ideal},  we must have $\height(\mathfrak{a}) < n$.

To complete the proof,  we show that there is some $\mathfrak{q}\in \min(\mathfrak{a})$ such that $\height(\mathfrak{q}) = n$.  Let $V$ be the space of linear forms.  By the maximality of $\{l_1,\dots,  l_c\}$,  for each $l \in V$,  it holds that $\height(\mathfrak{b}) = \height(\mathfrak{a})$,  where $\mathfrak{b} = \mathfrak{a} + (l)$.  According to Lemma~\ref{simple observation},  we obtain $\min(\mathfrak{b}) \subseteq \min (\mathfrak{a})$,  which implies $V = \cup_{\mathfrak{p} \in \min(\mathfrak{a})} ( \mathfrak{p} \cap V)$.  By Lemma~\ref{simple observation} and Proposition~\ref{gen-Bezout},  we also have $\left\lvert \mathfrak{a} \right\rvert \le d^m < \left\lvert \mathbb{K} \right\rvert$.  Thus,  we may derive from Lemma~\ref{uncover} that $V \subseteq \mathfrak{q}$ for some $\mathfrak{q} \in \min (\mathfrak{a})$.  Consequently,  $\mathfrak{q} = (x_1,\dots,  x_n)$ and $\height(\mathfrak{a}) = \height(\mathfrak{q}) = n$.
\end{proof}

Given a form $f \in \mathbb{K}[x_1,\dots,  x_n]$,  we recall from \eqref{eq:af} that $\mathfrak{a}(f)$ is the ideal in $\mathbb{K}[x_1,\dots,  x_n]$ generated by partial derivatives of $f$.  Moreover,  a sequence $f_1,\dots,  f_m \in \mathbb{K}[x_1,\dots,  x_n]$ is called an $R_1$-sequence if for each $1 \le s \le m$,  $\mathbb{K}[x_1,\dots,  x_n]/(f_1,\dots,  f_s)$ satisfies the Serre's condition $(R_1)$.  In particular,  an $R_1$-sequence of forms in $\mathbb{K}[x_1,\dots,  x_n]$ is a regular sequence.  The lemma that follows is extracted from the proof of (b) of Theorem~A in \cite{ananyan2020small}. 
\begin{lemma}\label{mincodim and R_eta-algebraically closed}
There exists a function $G$ on $\mathbb{N} \times \mathbb{N}$ with the following property.  Suppose $\mathbb{K}$ is a field and $V \subseteq \mathbb{K}[x_1,\dots,  x_n]$ is a subspace such that 
\begin{enumerate}[label=(\roman*)]
\item $\mathbb{K}$ is algebraically closed.
\item $V$ is graded and $\dim_{\mathbb{K}} V \le m$. \label{mincodim and R_eta-algebraically closed:item1}
\item $\deg(f) \le d$ for each form $f \in V$.\label{mincodim and R_eta-algebraically closed:item2}
\item $\height(\mathfrak{a}(V)) \ge G(d,m)$. \label{mincodim and R_eta-algebraically closed:item3}
\end{enumerate}
Then every $\mathbb{K}$-linear basis of $V$ is an $R_1$-sequence.
\end{lemma}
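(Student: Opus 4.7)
The plan is to deduce the lemma from the strength-based machinery of \cite{ananyan2020small} via an elementary comparison between $\height(\mathfrak{a}(f))$ and $\str(f)$. First I would establish, for every form $f \in \mathbb{K}[x_1,\dots,x_n]$ of degree at least $2$, the inequality $\height(\mathfrak{a}(f)) \le 2\str(f)$. Fixing a decomposition $f = \sum_{i=1}^{s} a_i b_i$ with $s = \str(f)$ and $a_i, b_i$ forms of positive degree, the product rule gives
\[
\frac{\partial f}{\partial x_{j}} \;=\; \sum_{i=1}^{s}\left(\frac{\partial a_{i}}{\partial x_{j}}\cdot b_{i} \;+\; a_{i}\cdot\frac{\partial b_{i}}{\partial x_{j}}\right) \;\in\; (a_{1},b_{1},\dots,a_{s},b_{s})
\]
for every $j$, so $\mathfrak{a}(f) \subseteq (a_{1},b_{1},\dots,a_{s},b_{s})$ and Theorem~\ref{thm:principal ideal} forces $\height(\mathfrak{a}(f)) \le 2s$. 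Taking the minimum over nonzero forms in $V$ yields $\str(V) \ge \tfrac{1}{2}\height(\mathfrak{a}(V)) \ge G(d,m)/2$.

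Next I would invoke the content of part (b) of Theorem~A in \cite{ananyan2020small}, which asserts the existence of a function $H$ on $\mathbb{N}\times\mathbb{N}$ such that, over an algebraically closed field, a graded subspace $V \subseteq \mathbb{K}[x_{1},\dots,x_{n}]$ of forms of degree at most $d$ with $\dim_{\mathbb{K}} V \le m$ and $\str(V) \ge H(d,m)$ has every $\mathbb{K}$-linear basis an $R_1$-sequence. Setting $G(d,m) \coloneqq 2H(d,m)$ completes the proof, since by the first step our hypothesis on $\height(\mathfrak{a}(V))$ is automatically promoted to the strength hypothesis required by Ananyan--Hochster.

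The substantive work is carried out in \cite{ananyan2020small} and proceeds by a simultaneous induction on $d$ and $m$, using the Jacobian criterion for the Serre condition $(R_1)$: at each inductive stage one must show that the singular locus cut out by the maximal minors of the Jacobian of $f_{1},\dots,f_{k}$ has codimension at least two in $\Spec \mathbb{K}[x_{1},\dots,x_{n}]/(f_{1},\dots,f_{k})$, and high collective strength is repeatedly used to rule out large singular components by peeling off a nonzero linear combination and reducing modulo one of its low-strength factors. Since the present lemma is precisely the reformulation of that conclusion in terms of $\mathfrak{a}(V)$, I expect the main obstacle to lie in carefully verifying that the comparison $\str(V) \ge \tfrac{1}{2}\height(\mathfrak{a}(V))$ is all the translation that is required, and that no additional input (such as the algebraic closedness used in Theorem~\ref{codim vs str}) enters at a hidden point. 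The algebraically closed hypothesis is needed only to appeal to the $R_1$-sequence statement of \cite{ananyan2020small} in its native setting, and this closes the argument.
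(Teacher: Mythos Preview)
Your argument is correct but follows a different route from the paper. The paper does not give a self-contained proof; instead, its remark after the lemma observes that in the proof of \cite[Theorem~A~(b)]{ananyan2020small} the strength hypothesis is only ever used (via \cite[Theorem~2.5]{ananyan2020small}) to produce the height condition $\height(\mathfrak{a}(V)) \ge G(d,m)$, and that the remainder of the Ananyan--Hochster argument proceeds from this height condition alone. In other words, the paper extracts the lemma from \emph{inside} the Ananyan--Hochster proof, noting that the implication chain there is $\str(V)\ \text{large} \Rightarrow \height(\mathfrak{a}(V))\ \text{large} \Rightarrow R_1$-sequence.

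You instead run the implication in the opposite direction: using the elementary inequality $\height(\mathfrak{a}(f)) \le 2\str(f)$ (which is precisely Lemma~\ref{lem:str-height} of the paper, stated immediately afterward), you promote the height hypothesis to a strength hypothesis and then invoke \cite[Theorem~A~(b)]{ananyan2020small} as a black box. This is cleaner and more modular, since it avoids any inspection of the Ananyan--Hochster proof; the only cost is a factor of $2$ in the function $G$, which is irrelevant since only the existence of $G$ is claimed. Both routes are valid, and in fact the paper uses exactly your reduction later, in the proof of Proposition~\ref{mincodim and R_eta-general field}.
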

\begin{remark}
The statement of \cite[Theorem~A~(b)]{ananyan2020small} includes the assumption that $\str(f) \ge G(d,m)$ for each form $f\in V \setminus \{0\}$.  This makes Lemma~\ref{mincodim and R_eta-algebraically closed} appear distinct from \cite[Theorem~A~(b)]{ananyan2020small} at first glance.  However,  a closer investigation of the proof of \cite[Theorem~A~(b)]{ananyan2020small} reveals that the strength condition actually implies \ref{mincodim and R_eta-algebraically closed:item3} of Lemma~\ref{mincodim and R_eta-algebraically closed} by inductively applying \cite[Theorem~2.5]{ananyan2020small},  from which the desired conclusion follows. 
\end{remark} 
 
\begin{lemma}\label{lem:str-height}
Let $\mathbb{K}$ be a field.  For each form $f \in \mathbb{K}[x_1,\dots,  x_n]$,  we have $2 \str(f) \ge \height(\mathfrak{a}(f))$.
\end{lemma}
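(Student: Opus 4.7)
The plan is to exploit the product rule. If $\str(f) = r$, fix a decomposition $f = \sum_{i=1}^{r} a_i b_i$ with $a_i, b_i$ forms of positive degree. Then for every $1 \le j \le n$,
\[
\frac{\partial f}{\partial x_j} \;=\; \sum_{i=1}^{r} \left( \frac{\partial a_i}{\partial x_j} \, b_i \,+\, a_i \, \frac{\partial b_i}{\partial x_j} \right) \;\in\; (a_1, b_1, \dots, a_r, b_r),
\]
so the ideal $\mathfrak{a}(f)$ generated by the partial derivatives of $f$ is contained in the ideal $\mathfrak{b} \coloneqq (a_1, b_1, \dots, a_r, b_r)$.

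Next, I invoke two easy facts. First, height is monotone under inclusion: if $\mathfrak{a}(f) \subseteq \mathfrak{b}$, then every prime containing $\mathfrak{b}$ also contains $\mathfrak{a}(f)$, so the minimum height over primes containing $\mathfrak{b}$ is at least the corresponding minimum for $\mathfrak{a}(f)$, giving $\height(\mathfrak{a}(f)) \le \height(\mathfrak{b})$. Second, by Theorem~\ref{thm:principal ideal} (Krull's principal ideal theorem applied to an ideal with $2r$ generators), $\height(\mathfrak{b}) \le 2r$.

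Chaining these two inequalities yields $\height(\mathfrak{a}(f)) \le 2r = 2\str(f)$, as desired. There is no real obstacle here; the entire proof is a two-line computation using the product rule and the general height bound already recorded in the paper as Theorem~\ref{thm:principal ideal}. The only minor point to be careful about is that the definition of strength requires $a_i, b_i$ to be forms of positive degree, which ensures the decomposition is nontrivial, but this plays no role in the bound itself since we only use that $\mathfrak{b}$ has $2r$ generators.
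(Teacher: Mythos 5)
Your proof is correct and is essentially the same as the paper's: write $f=\sum_i a_i b_i$ with $r=\str(f)$, use the product rule to see $\mathfrak{a}(f)\subseteq(a_1,b_1,\dots,a_r,b_r)$, and bound the height of the latter ideal by $2r$ via Theorem~\ref{thm:principal ideal}. The only difference is that you spell out the (routine) monotonicity of height under inclusion, which the paper uses implicitly.
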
 
\begin{proof}
Suppose $\str(f) = s$ so that $f = \sum_{k=1}^s g_k h_k$ for some polynomials $g_1,\dots,  g_s,  h_1,\dots,  h_s \in \mathbb{K}[x_1,\dots,  x_n]$. Since $\partial f / \partial x_j = \sum_{k = 1}^s \left( h_k \partial g_k/ \partial x_j  + g_k \partial h_k/ \partial x_j \right)$,  we have 
\[
\mathfrak{a}(f)  \subseteq \mathfrak{b} \coloneqq (g_1,\dots,  g_s,  h_1,\dots,  h_s).\]
By Theorem~\ref{thm:principal ideal}, we obtain $\height(\mathfrak{a}(f)) \le \height(\mathfrak{b}) \le 2s$.
\end{proof}
 
Given a graded subspace $V\subseteq \mathbb{K}[x_1,\dots,  x_m]$,  we recall from \eqref{eq:ht(af)} that 
\begin{align*} 
\height( \mathfrak{a}(V) ) &\coloneqq \min\{ \height (\mathfrak{a}(f)): \text{$f$ is a form in $V\setminus \{0\}$} \},  \\
\str(V) &\coloneqq \min\{ \str(f): \text{$f$ is a form in $V\setminus \{0\}$} \}.
\end{align*}
Moreover,  $\height( \mathfrak{a}(V^{\overline{\mathbb{K}}}) )$ and $\str(V^{\overline{\mathbb{K}}})$ are defined by viewing $V^{\overline{\mathbb{K}}}$ as a vector space over $\overline{\mathbb{K}}$.
\begin{proposition}\label{mincodim and R_eta-general field}
There is a function $H$ on $\mathbb{N} \times \mathbb{N}$ satisfying the following property.  Assume $\mathbb{K}$ is a field and $V \subseteq \mathbb{K}[x_1,\dots,  x_n]$ is a subspace such that 
\begin{enumerate}[label=(\roman*)]
\item Either $\ch(\mathbb{K}) = 0$ or $\ch(\mathbb{K}) > d$. 
\item $V$ is graded and $\dim V \le m$. 
\item $\deg (f) \le d$ for each form $f\in V\setminus \{0\}$
\item $\height(\mathfrak{a}(V)) \ge H(d,m)$.
\end{enumerate}
Then every $\mathbb{K}$-linear basis of $V$ an $R_1$-sequence.
\end{proposition}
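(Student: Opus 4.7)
The strategy is to reduce to Lemma~\ref{mincodim and R_eta-algebraically closed}, the analogous statement over an algebraically closed field. The plan is first to transfer the height hypothesis on $\mathfrak{a}(V)$ into a corresponding hypothesis on $\mathfrak{a}(V^{\overline{\mathbb{K}}})$, then invoke the algebraically closed case, and finally descend the $R_1$-sequence property from $\overline{\mathbb{K}}[x_1,\dots,x_n]$ back down to $\mathbb{K}[x_1,\dots,x_n]$ via faithfully flat descent of regularity.

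For the transfer step, I would chain three available tools together in their contrapositive forms. By Lemma~\ref{lem:str-height}, $\height(\mathfrak{a}(V)) \ge H(d,m)$ forces $\str(V) \ge H(d,m)/2$. Theorem~\ref{stability of minstr} (contrapositive) then shows that if $\str(V) > F(d,m,r)$, then $\str(V^{\overline{\mathbb{K}}}) > r$. Finally, Theorem~\ref{codim vs str} (contrapositive, applied to every form of $V^{\overline{\mathbb{K}}}$) gives that $\str(V^{\overline{\mathbb{K}}}) > E(d,s)$ implies $\height(\mathfrak{a}(g)) > s$ for every form $g\in V^{\overline{\mathbb{K}}}\setminus\{0\}$, i.e., $\height(\mathfrak{a}(V^{\overline{\mathbb{K}}})) > s$. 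Setting
\[
H(d,m) \coloneqq 2\, F\bigl(d,m,\, E(d,\, G(d,m))\bigr) + 1,
\]
this chain forces $\height(\mathfrak{a}(V^{\overline{\mathbb{K}}})) \ge G(d,m)$, so Lemma~\ref{mincodim and R_eta-algebraically closed} applies to $V^{\overline{\mathbb{K}}}$. Consequently, for every $\mathbb{K}$-linear subspace $W \subseteq V$, the quotient $\overline{\mathbb{K}}[x_1,\dots,x_n]/(W^{\overline{\mathbb{K}}})$ satisfies Serre's $R_1$.

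For the descent step, fix a $\mathbb{K}$-basis $f_1,\dots,f_t$ of $V$ and let $W$ denote the span of any prefix $f_1,\dots,f_s$. The inclusion $\mathbb{K} \hookrightarrow \overline{\mathbb{K}}$ is faithfully flat, so the base change $A \coloneqq \mathbb{K}[x_1,\dots,x_n]/(W) \hookrightarrow A \otimes_{\mathbb{K}} \overline{\mathbb{K}}$ is faithfully flat. Since $\overline{\mathbb{K}}/\mathbb{K}$ is algebraic, the fiber $k(\mathfrak{p}) \otimes_{\mathbb{K}} \overline{\mathbb{K}}$ has Krull dimension $0$ for every prime $\mathfrak{p} \subseteq A$. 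In particular, for a prime $\mathfrak{p}$ of height at most $1$ in $A$, any prime $\mathfrak{P}$ of $A \otimes_{\mathbb{K}}\overline{\mathbb{K}}$ lying over $\mathfrak{p}$ satisfies $\height(\mathfrak{P}) = \height(\mathfrak{p}) \le 1$, so the local ring at $\mathfrak{P}$ is regular by the conclusion of the previous paragraph. Faithfully flat descent of regularity (e.g., \cite[Theorem~23.7]{Matsumura86}) then yields that $A_\mathfrak{p}$ is regular, so $A$ satisfies $R_1$. The same argument applies to every linear subspace of $V$, which is exactly the assertion that every $\mathbb{K}$-basis of $V$ is an $R_1$-sequence.

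The main obstacle I anticipate is the transfer step itself: a priori, a large lower bound on $\height(\mathfrak{a}(f))$ as $f$ ranges over $V$ does not yield one when $f$ is allowed to range over the strictly larger $\overline{\mathbb{K}}$-space $V^{\overline{\mathbb{K}}}$, so one cannot simply invoke Lemma~\ref{simple observation}\ref{simple observation:item1}. The detour through strength circumvents this because Theorem~\ref{stability of minstr} is precisely the statement that the minimum strength is preserved, up to a bounded function in $d$ and $\dim V$, under base change to the algebraic closure; and it is exactly this theorem that requires the characteristic assumption on $\mathbb{K}$, which is thereby inherited by the proposition.
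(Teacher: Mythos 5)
Your proposal is correct and follows essentially the same route as the paper: convert the height hypothesis to a strength bound via Lemma~\ref{lem:str-height}, transfer it to $V^{\overline{\mathbb{K}}}$ through (the contrapositives of) Theorem~\ref{stability of minstr} and Theorem~\ref{codim vs str}, and then invoke Lemma~\ref{mincodim and R_eta-algebraically closed}. The only difference is that you spell out the final descent of the $R_1$ property from $\overline{\mathbb{K}}[x_1,\dots,x_n]$ to $\mathbb{K}[x_1,\dots,x_n]$ via flat descent of regularity, a step the paper treats as immediate ("in particular"), and you give an explicit formula for $H(d,m)$ where the paper argues with loosely defined inverse functions.
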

\begin{proof}
Denote $r \coloneqq \str(V) $ and $s \coloneqq \height(\mathfrak{a}(V))$. By Lemma~\ref{lem:str-height},  we have $r\ge s/2$. According to Theorem~\ref{stability of minstr},  there is a function $\widetilde{F}$ on $\mathbb{N} \times \mathbb{N} \times \mathbb{N}$ such that $\overline{r} \coloneqq \str(V^{\overline{\mathbb{K}}}) \ge \widetilde{F}(d,m,  r )$. Theorem~\ref{codim vs str} implies  $\height(\mathfrak{a}(V^{\overline{\mathbb{K}}})) \ge \widetilde{E}(d,  \overline{r} )$ for some function $\widetilde{E}$ on $\mathbb{N} \times \mathbb{N}$.  Putting all the above inequalities together,  we obtain a function $C$ on $\mathbb{N} \times \mathbb{N} \times \mathbb{N}$ such that $\height(\mathfrak{a}(V^{\overline{\mathbb{K}}})) \ge C(d, m, s)$.  Therefore,  we may choose $H(d,m)$ so that $C(d,m,H(d,m)) \ge G(d,m)$ where $G$ is the function in Lemma~\ref{mincodim and R_eta-algebraically closed}.  By Lemma~\ref{mincodim and R_eta-algebraically closed},  every $\overline{\mathbb{K}}$-linear basis of $V^{\overline{\mathbb{K}}}$ is an $R_1$-sequence in $\overline{\mathbb{K}}[x_1,\dots,  x_n]$.  In particular,  every $\mathbb{K}$-linear basis of $V$ is an $R_1$-sequence in $\mathbb{K}[x_1,\dots, x_n]$.
\end{proof}

The proof of the following proposition is inspired by that of \cite[Theorem~B]{ananyan2020small}.
\begin{proposition}\label{small subalgebra general field}
There is a function $I$ on $\mathbb{N} \times \mathbb{N}$ satisfying the following property. Assume $\mathbb{K}$ is a field and $f_1,\dots,  f_m \in \mathbb{K}[x_1,\dots,  x_n]$  are forms such that
\begin{enumerate}[label=(\roman*)]
\item Either $\ch(\mathbb{K}) = 0$ or $\ch(\mathbb{K}) > d$. 
\item For each $1 \le i \le m$,  $\deg(f_i) \le d$. 
\end{enumerate}
Then there exists an $R_1$-sequence of forms $g_1,\dots,  g_s \in \mathbb{K}[x_1,\dots,  x_n]$ such that $s \le I(d,m)$,  $\{f_1,\dots,  f_m\} \subseteq \mathbb{K}[g_1,\dots,  g_s]$ and $\deg(g_j) \le d$ for each $j \in [s]$. 
\end{proposition}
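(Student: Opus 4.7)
The plan is to reproduce the strength-collapse argument of Ananyan--Hochster underlying Theorem~\ref{algebraically closed small subalgebra} over an arbitrary base field, using Proposition~\ref{mincodim and R_eta-general field} as the terminal criterion and Theorem~\ref{stability of minstr} to descend strength bounds from $\overline{\mathbb{K}}$ to $\mathbb{K}$. Concretely, I will iteratively refine a graded subspace $V \subseteq \mathbb{K}[x_1,\dots,x_n]$, initialized as $V \coloneqq \spa_{\mathbb{K}}\{f_1,\dots,f_m\}$; since each $f_i$ is a form, $V$ is graded with $\dim V \le m$ and all homogeneous elements of degree $\le d$. Throughout the procedure I maintain the invariant $\{f_1,\dots,f_m\} \subseteq \mathbb{K}[V]$, so the subalgebra generated by any homogeneous basis of $V$ still contains every $f_i$.

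At each step I compare $\height(\mathfrak{a}(V))$ with $H(d,\dim V)$, where $H$ is the function from Proposition~\ref{mincodim and R_eta-general field}. If $\height(\mathfrak{a}(V)) \ge H(d,\dim V)$, the procedure halts: any homogeneous $\mathbb{K}$-basis of $V$ is an $R_1$-sequence of forms of degree $\le d$, and by the invariant it generates a subalgebra containing the $f_i$'s, so this basis provides the desired $g_1,\dots,g_s$. Otherwise there is a nonzero homogeneous form $f \in V$ with $\height(\mathfrak{a}(f)) \le H(d,\dim V)-1$. By Lemma~\ref{simple observation}\ref{simple observation:item1} the same height bound persists after base change, so Theorem~\ref{codim vs str} yields $\str_{\overline{\mathbb{K}}}(f) \le E(d,H(d,\dim V)-1)$, and applying Theorem~\ref{stability of minstr} to the one-dimensional subspace $\mathbb{K}\cdot f$ gives $\str_{\mathbb{K}}(f) \le r \coloneqq F\bigl(d,1,E(d,H(d,\dim V)-1)\bigr)$. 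Writing $f = \sum_{i=1}^{r} a_i b_i$ over $\mathbb{K}$ with $\deg(a_i),\deg(b_i) < \deg(f)$, I extend $\{f\}$ to a homogeneous basis $B$ of $V$ and replace $V$ by $V' \coloneqq \spa_{\mathbb{K}}\bigl((B \setminus \{f\}) \cup \{a_1,b_1,\dots,a_r,b_r\}\bigr)$. Since $f = \sum_i a_i b_i \in \mathbb{K}[V']$, we have $\mathbb{K}[V] \subseteq \mathbb{K}[V']$, so the invariant is preserved.

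Termination follows from the strict lexicographic decrease of the degree profile $\bigl(\dim V_d,\dim V_{d-1},\dots,\dim V_1\bigr)$ at every iteration: if $\deg(f)=e$, then $\dim V'_{e'}=\dim V_{e'}$ for all $e' > e$, $\dim V'_e = \dim V_e - 1$, and only coordinates below position $e$ can grow, because every $a_i,b_i$ has degree strictly less than $e$. Well-foundedness of lex order on finite $\mathbb{N}$-tuples then guarantees termination after finitely many steps, and a phase-by-phase analysis (first exhaust all collapses of degree-$d$ forms, then degree-$(d-1)$ forms, and so on) produces an effective recursive bound $I(d,m)$ on the final $\dim V$. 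The main technical obstacle is precisely this bookkeeping: both the threshold $H(d,\dim V)$ and the decomposition length $r$ depend on the current $\dim V$, so controlling the iteration requires a simultaneous uniform bound on $\dim V$ throughout, which is obtained by using the strict monotonicity of the top-degree coordinates to cap the total number of collapses contributed at each degree level.
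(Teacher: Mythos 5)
Your proposal is correct and follows essentially the same route as the paper's proof: initialize $V$ as the span of the $f_i$, halt when $\height(\mathfrak{a}(V)) \ge H(d,\dim V)$ via Proposition~\ref{mincodim and R_eta-general field}, otherwise collapse a low-height form using Lemma~\ref{simple observation}, Theorem~\ref{codim vs str} and Theorem~\ref{stability of minstr} to get a bounded strength decomposition over $\mathbb{K}$, and terminate by the strictly decreasing degree profile (the paper phrases this as induction on the colexicographic order of $(\dim V_1,\dots,\dim V_d)$, choosing the collapsed form of largest degree, and defines $I$ by updating it recursively exactly as in your bookkeeping remark).
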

\begin{proof} 
Let $V = \oplus_{i=1}^d V_i$ be a graded subspace of $R \coloneqq \mathbb{K}[x_1,\dots,  x_n]$ and let $n_i \coloneqq \dim V_i$ for $i\in [d]$.  We claim that there exists an $R_1$-sequence of forms $g_1,\dots,  g_s\in R$ such that $s \le I(d,n_1 + \cdots + n_d)$ and $V \subseteq \mathbb{K}[g_1,\dots,  g_s]$.  

To prove the claim,  we denote $n(V) \coloneqq (n_1,\dots,  n_d) \in \mathbb{N}^d$ and equip $\mathbb{N}^d$ with the colexicographic order.  We proceed by induction on this order.  If $n(V) = (n_1,0,  \dots,  0)$,  there is nothing to prove.  Assume the claim is true for any graded subspace $W \subseteq R$ with $n(W)< (n_1,\dots, n_d)$.  Now we prove the claim for a graded subspace $V \subseteq R$ with $n(V)= (n_1,\dots, n_d)$.  We remark that although  the induction hypothesis already provides us a function $I$ on $\mathbb{N} \times \mathbb{N}$,  it may not work for $V$.  Thus,  we will need to modify $I$ in the induction step.  

Let $E,F$ and $H$ be functions in Theorems~\ref{codim vs str},  \ref{stability of minstr} and Proposition~\ref{mincodim and R_eta-general field} respectively,  and let $m \coloneqq n_1 + \cdots + n_d$. Suppose that $\height(\mathfrak{a}(V)) \ge H(d,  m)$, then Proposition~\ref{mincodim and R_eta-general field} implies that any basis $g_1,\dots,  g_{s}$ of $V$ is a desired sequence,  where $s \coloneqq \dim V \le m$.  If there is some form $f \in V$ such that $\height(\mathfrak{a}(f)) < H(d,  m)$,  we pick such an $f$ of the largest degree.  Then $\height(\mathfrak{a}(f)^{\overline{\mathbb{K}}}) < H(d,  m)$ by Lemma~\ref{simple observation}.  According to Theorem~\ref{codim vs str},  we conclude that $\str_{\overline{\mathbb{K}}}(f) \le E(d,  H(d,  m))$.  Thus,  we may deduce from Theorem~\ref{stability of minstr} that $r \coloneqq \str_{\mathbb{K}}(f) \le  F(d,1,E(d,H(d,m)))$.  We write $f = \sum_{i=1}^r g_i h_i$ for some $g_i, h_i \in R$ of degrees strictly less than $\deg(f)$.  Let $V'_t \subseteq V_t$ be a subspace such that $V_t = V'_t \oplus \spa_{\mathbb{K}} \{ f \}$.  We consider 
\[
V' \coloneqq \left(  V'_t \oplus \left( \oplus_{1 \le i \ne t \le d} V_i \right) \right) + \spa_{\mathbb{K}} \{g_1,\dots, g_r,  h_1,  \dots, h_r\}.
\]
By construction,  we have $V \subseteq V'$,  $n(V') < n (V) = (n_1,\dots,  n_d)$ and $\dim V' \le \dim V - 1 + 2r $.  By induction hypothesis,  there exists an $R_1$-sequence $g_1,\dots,  g_s \in R$ such that $V' \subseteq \mathbb{K}[g_1,\dots,  g_s]$ and 
\[
s \le I(d,  \dim V') \le I(d,  m - 1 + 2F(d,1,E(d,H(d,m)))).
\]
Updating $I(d,m)$ to $I(d,  m - 1 + 2F(d,1,E(d,H(d,m))))$,  the induction step is complete.  
\end{proof}

Next,  we study the existence of solutions of polynomial systems in field extensions.  In Lemma~\ref{solve equation} and Proposition~\ref{solution of constructable set} below,  we prove that under some conditions, a polynomial system over $\mathbb{K}$ always admits a solution over some extension $\mathbb{L}/\mathbb{K}$ with $[\mathbb{L}: \mathbb{K}]$ uniformly bounded. 
\begin{lemma}[Existence of rational point I]\label{solve equation}
There is a function $K(d,m)$ on $\mathbb{N} \times \mathbb{N}$ satisfying the following property.  Suppose $\mathbb{K}$ is a field and $f_1,\dots,  f_m \in \mathbb{K}[x_1,\dots,  x_n]$ are forms such that 
\begin{enumerate}[label=(\roman*)]
\item Either $\ch(\mathbb{K}) = 0$ or $\ch(\mathbb{K})> d$.
\item $\deg(f_i) \le d$ for each $1 \le i \le n$.
\item There is some $u \in \overline{\mathbb{K}}^n \setminus \{0\}$ such that $f_1(u) = \cdots = f_m(u) = 0$.
\end{enumerate}
Then there exists an extension $\mathbb{L}$ of $\mathbb{K}$ such that $[\mathbb{L}:\mathbb{K}] \le K(d,m)$ and $f_1(v) = \cdots = f_m(v) = 0$ for some $v \in \mathbb{L}^n \setminus \{0\}$.
\end{lemma}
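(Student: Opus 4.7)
The plan is to combine the small-subalgebra reduction of Proposition~\ref{small subalgebra general field} with the regular-sequence extension of Proposition~\ref{linear extended}: the former shrinks the relevant subalgebra to a bounded number of generators, while the latter turns this short regular sequence into a finite surjective endomorphism of $\mathbb{A}^{n}$ whose fibers over rational points lie in bounded-degree extensions, making it possible to lift solutions of the shrunken problem. Concretely, apply Proposition~\ref{small subalgebra general field} to obtain an $R_{1}$-sequence $g_{1},\dots,g_{s}$ of forms of degree at most $d$ in $\mathbb{K}[x_{1},\dots,x_{n}]$, with $s\le I(d,m)$, such that $f_{j}=F_{j}(g_{1},\dots,g_{s})$ for some $F_{j}\in\mathbb{K}[y_{1},\dots,y_{s}]$ of ordinary degree at most $d$. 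Since the $f_{j}$ and $g_{i}$ are forms, each $F_{j}$ is weighted homogeneous; in particular $F_{j}(0)=0$. Note $s\le n$ because $g_{1},\dots,g_{s}$ is a regular sequence in $n$ variables.

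In the case $s<n$, pass to an extension $\mathbb{K}'/\mathbb{K}$ of bounded degree in $d,m$ with $|\mathbb{K}'|\ge d^{s}+1$, and apply Proposition~\ref{linear extended} over $\mathbb{K}'$ to extend $g_{1},\dots,g_{s}$ by linear forms $l_{1},\dots,l_{n-s}$ to a regular sequence of length $n$. By Lemma~\ref{lem:regular} and Lemma~\ref{lem:dim+ht}, the generated ideal is $(x_{1},\dots,x_{n})$-primary, so the morphism $\Phi=(g_{1},\dots,g_{s},l_{1},\dots,l_{n-s})\colon\mathbb{A}^{n}_{\mathbb{K}'}\to\mathbb{A}^{n}_{\mathbb{K}'}$ is finite and surjective of degree at most $d^{s}$ by Proposition~\ref{gen-Bezout}. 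Any preimage $v$ of the rational point $(0,\dots,0,1)$ under $\Phi$ is then nonzero (since $\Phi(0)=0$), satisfies $g_{i}(v)=0$, and hence $f_{j}(v)=F_{j}(0)=0$; moreover $v$ lies in an extension of $\mathbb{K}$ of degree bounded by a function of $d,m$ alone.

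In the remaining case $s=n$, the number of variables is itself bounded: $n\le I(d,m)$. The variety $V=V(f_{1},\dots,f_{m})\subseteq\mathbb{A}^{n}$ has degree at most $d^{n}$ by Proposition~\ref{gen-Bezout} and contains a nonzero $\overline{\mathbb{K}}$-point by hypothesis. One chooses an irreducible component $V_{0}$ of $V_{\overline{\mathbb{K}}}$ with $V_{0}\ne\{0\}$, which is defined over an extension of $\mathbb{K}$ of degree at most the number of components (bounded by $\deg V$), and descends via iterated intersections with linear subspaces (first an affine hyperplane missing the origin, then general linear hyperplanes) to a zero-dimensional subvariety whose points lie in a bounded-degree extension of $\mathbb{K}$ by Proposition~\ref{gen-Bezout}. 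The main technical obstacle is this descent over small finite fields: naive dimension counting does not yield a rational subspace meeting $V_{0}$ in the expected dimension while avoiding the origin, so one must first pass to a controlled extension of $\mathbb{K}$ to secure enough rational hyperplanes and then select an admissible one by a cardinality argument in the style of Lemma~\ref{uncover}.
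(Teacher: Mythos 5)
Your main branch ($s<n$) is essentially the paper's own argument: apply Proposition~\ref{small subalgebra general field} to get a bounded $R_1$-sequence $g_1,\dots,g_s$ with $(f_1,\dots,f_m)\subseteq(g_1,\dots,g_s)$, pass to a bounded extension $\mathbb{K}'$ so that Proposition~\ref{linear extended} supplies linear forms completing it to a full homogeneous system of parameters, then dehomogenize and bound the residue degree by Proposition~\ref{gen-Bezout}. The only cosmetic difference is that you take the fiber of the finite map $\Phi=(g_1,\dots,g_s,l_1,\dots,l_{n-s})$ over $(0,\dots,0,1)$, whereas the paper keeps only $l_1,\dots,l_{n-s-1}$, picks a nonzero $\overline{\mathbb{K}}$-point on the resulting one-dimensional cone, and then adds $x_1-1$; these are interchangeable, and your observation that $F_j(0)=0$ correctly substitutes for the paper's claim $\mathfrak{a}\subseteq(g_1,\dots,g_s)$.

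The genuine gap is your case $s=n$, which you do not actually prove. Two specific problems: first, the assertion that an irreducible component $V_0$ of $V_{\overline{\mathbb{K}}}$ is ``defined over an extension of degree at most the number of components'' is unjustified (a Galois orbit-stabilizer count only controls the separable part, and over imperfect fields components may require inseparable extensions; you give no argument that the hypothesis $\ch(\mathbb{K})=0$ or $\ch(\mathbb{K})>d$ rescues this). Second, you yourself label the descent by hyperplane sections over small finite fields as ``the main technical obstacle'' and only indicate that one ``must'' resolve it by enlarging $\mathbb{K}$ and a Lemma~\ref{uncover}-style count; a plan is not a proof, so this branch is incomplete as written. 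To your credit, this is precisely the case the paper's proof silently skips: its ideal $(g_1,\dots,g_s,l_1,\dots,l_{n-s-1})$ only makes sense when $s\le n-1$, and hypothesis (iii) is never invoked there, so you have isolated a real boundary case rather than a phantom one. It can be closed along the lines you hint at, and more simply than via components: when $s=n$ one has $n\le I(d,m)$, so all parameters are bounded; hypothesis (iii) gives $\height(f_1,\dots,f_m)\le n-1$, and over a bounded extension with enough elements one can greedily adjoin linear forms to $(f_1,\dots,f_m)$ (choosing each outside the finitely many minimal primes, whose number is bounded via Proposition~\ref{gen-Bezout} and Lemma~\ref{uncover}, as in Propositions~\ref{long regular sequence-2} and~\ref{linear extended}) until the height is exactly $n-1$, then dehomogenize and apply Proposition~\ref{gen-Bezout} exactly as in your main case, with all degrees bounded by functions of $d$ and $m$.
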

\begin{proof}
We denote $R \coloneqq \mathbb{K}[x_1,\dots, x_n]$,  $\mathfrak{a} \coloneqq (f_1,\dots,  f_m)\subseteq R$ and $Z \coloneqq \Spec(R/\mathfrak{a})$.  The existence of some $v \in \mathbb{L}^n \setminus \{0\}$ such that $f_1(v) = \cdots = f_n(v) = 0$ is equivalent to $Z(\mathbb{L}) \setminus \{0\} \ne \emptyset$,  i.e.,  there is a $\mathbb{K}$-algebra homomorphism $\varphi: R/\mathfrak{a} \to \mathbb{L}$ such that $\varphi^{-1}(0) \not\equiv (x_1,\dots,  x_n) \pmod{\mathfrak{a}}$.

By Proposition~\ref{small subalgebra general field},  there is a function $I$ on $\mathbb{N} \times \mathbb{N}$ and a regular sequence $g_{1},\dots,g_{s}$ of forms in $\mathbb{K}[x_{1},\dots,x_{n}]$ such that $\mathfrak{a} \subseteq (g_{1},\cdots,g_{s})$,  $s \le I(d,m)$ and $\deg (g_i) \le d$ for each $1 \le i \le s$.  

Let $\mathbb{K}'$ be an extension of $\mathbb{K}$ such that $\vert\mathbb{K}'\vert\ge d^{I(d,m)}+1$ and $[\mathbb{K'}:\mathbb{K}]\le I(d,m)\log_{2}d$.  In fact,  if $\mathbb{K}$ is an infinite field,  then we may choose $\mathbb{K}' = \mathbb{K}$.  If $\mathbb{K} = \mathbb{F}_{p^r}$ where $p > d$,  then we choose $\mathbb{K}' = \mathbb{F}_{p^{r k}}$ where $k = I(d,m) \log_2 d$.  Since $\otimes_{\mathbb{K}} \mathbb{K}'$ is an exact functor,  $g_1,\dots, g_s$ is also a regular sequence in $\mathbb{K}'[x_1,\dots,  x_n]$. According to Proposition~\ref{linear extended},  there are linear forms $l_1,  \dots,  l_{n-s} \in R'\coloneqq \mathbb{K}'[x_1,\dots, x_n]$ such that $g_1,\dots,  g_s,  l_1,\dots,  l_{n-s}$ is a regular sequence.  

Denote $\mathfrak{a}' \coloneqq (g_1,\dots,  g_s,  l_1,\dots,  l_{n-s-1}) \subseteq R'$ and $X \coloneqq \Spec(R'/\mathfrak{a}')$.  By Lemmas~\ref{lem:dim+ht} and \ref{lem:regular},  we have $\height(\mathfrak{a}) = n-1$ and $\dim X = 1$.  Thus there is some $a = (a_1,\dots,  a_n) \in X (\overline{\mathbb{K}}) \setminus \{0\}$.  Suppose without loss of generality that $a_1 = 1$.  We consider $\mathfrak{b} \coloneqq \mathfrak{a}' + (x_1 - 1) \subsetneq R'$.  Since $\mathfrak{a}'$ is a homogeneous ideal,  so is every $\mathfrak{p} \in \min (\mathfrak{a}') = \Ass(\mathfrak{a}')$.  In particular,  $x_1 - 1 \not\in \mathfrak{p}$.  Thus,  $g_1,  \dots, g_s,  l_1,\dots,  l_{n-s-1},  x_1 - 1$ is a regular sequence,  which implies $\height(\mathfrak{b}) = n$.  Let $\mathfrak{m} \subsetneq R'$ be a maximal ideal containing $\mathfrak{b}$.  Then $\mathfrak{m} \ne (x_1,\dots,  x_n)$ and $\mathfrak{a}R' \subseteq (g_1,\dots,  g_s) R' \subseteq \mathfrak{b} \subseteq  \mathfrak{m}$.  Therefore,  we obtain an obvious homomorphism of $\mathbb{K}$ algebras:
\[
R/\mathfrak{a}  \to (R/\mathfrak{a})\otimes_{\mathbb{K}} \mathbb{K}' = R'/\mathfrak{a}R' \to  R'/\mathfrak{m} \eqcolon  \mathbb{L},
\]
which provides a nonzero $\mathbb{L}$-rational point of $Z$.  

It is left to estimate $[\mathbb{L}: \mathbb{K}]$.  To achieve this,  we observe that $[\mathbb{L}: \mathbb{K}] = [R'/\mathfrak{m}: \mathbb{K}'] [\mathbb{K}': \mathbb{K}]$.  By the choice of $\mathbb{K}'$,  we have $[\mathbb{K}': \mathbb{K}]  \le I(d, m) \log_2 d$.  Moreover,  let $Y = \Spec(R'/\mathfrak{b})$,  then $\mathfrak{b} \subseteq \mathfrak{m}$ together with Lemma~\ref{gen-Bezout} leads to
\[
[R'/\mathfrak{m}: \mathbb{K}'] \le \dim_{\mathbb{K}'}(R'/\mathfrak{b}) 
= \deg(Y)  \le d^s \le d^{I(d,m)}.
\] 
Therefore,  we have $[\mathbb{L}: \mathbb{K}] \le K(d,m) \coloneqq d^{I(d,m)} I(d,m) \log_2 d$.
\end{proof}
\begin{proposition}[Existence of rational point II]\label{solution of constructable set}
There is a function $L$ on $\mathbb{N} \times \mathbb{N}$ with the following property.  Suppose that $\mathbb{K}$ is a field and $f_1,\dots,  f_m,  g \in \mathbb{K}[x_1,\dots,  x_n]$ are forms such that 
\begin{enumerate}[label=(\roman*)]
\item $\mathbb{K}$ is algebraically closed,  or $\ch(\mathbb{K}) = 0$,  or $\ch(\mathbb{K})> d$.
\item $\deg g \le d$ and $\deg f_i \le d$ for each $1 \le i \le m+1$. 
\item $f_1,\dots, f_m$ form an $R_1$-sequence.  
\item $g \not\in (f_1,\dots,  f_m)$.
\end{enumerate} 
Then there is a field extension $\mathbb{L}$ of $\mathbb{K}$ such that $[\mathbb{L}:  \mathbb{K}] \le L(d,m)$ and $f_1(v) = \cdots = f_m (v) = 0$ but $g(v) = 1$ for some $v \in \mathbb{L}^n$.
\end{proposition}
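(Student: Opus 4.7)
The plan is to lift the problem to $R' := \mathbb{K}[x_1,\dots,x_n,y]$, where I will search for an $\mathbb{L}$-rational zero of the homogeneous system $f_1,\dots,f_m,\; g - y^{d_0}$ at which $y = 1$, so that $g(v) = 1$ comes for free; here $d_0 := \deg g \ge 1$ (the case $d_0 = 0$ is immediate). Writing $A := \mathbb{K}[x_1,\dots,x_n]/(f_1,\dots,f_m)$, hypothesis (iii) makes $A$ an integral domain and hypothesis (iv) makes $\bar g \in A$ nonzero.

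The technical core of the plan is the claim that $f_1,\dots,f_m,\; g - y^{d_0},\; y$ is a regular sequence of length $m+2$ in $R'$. After quotienting by the first $m$ terms we land in $A[y]$, where $g - y^{d_0}$ is (up to sign) monic in $y$ and therefore a non-zero-divisor. In the further quotient $A[y]/(y^{d_0} - \bar g)$, a free $A$-module of rank $d_0$ with basis $1,y,\dots,y^{d_0-1}$, multiplication by $y$ is the companion-type endomorphism of determinant $\pm\bar g \ne 0$ in the domain $A$, so $y$ is itself a non-zero-divisor. This is exactly where hypothesis (iv) is used.

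Given the claim, I pass to a field extension $\mathbb{K}'/\mathbb{K}$ with $|\mathbb{K}'| \ge d^{m+2} + 1$ and $[\mathbb{K}':\mathbb{K}] \le (m+2)\log_2 d$ (taking $\mathbb{K}' = \mathbb{K}$ when $\mathbb{K}$ is infinite) and invoke Proposition~\ref{linear extended} to extend the length-$(m+2)$ regular sequence to one of length $n+1$ by linear forms $l_1,\dots,l_{n-m-1}$. Dropping the term $y$ (by Cohen--Macaulay permutability) leaves $\mathfrak{b}' := (f_1,\dots,f_m,\; g - y^{d_0},\; l_1,\dots,l_{n-m-1})$, a homogeneous complete intersection of height $n$. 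Since its generators all have positive degree, $R'_{\mathbb{K}'}/\mathfrak{b}'$ is a connected $\mathbb{N}$-graded ring, so all associated primes of $\mathfrak{b}'$ are homogeneous and the inhomogeneous element $y - 1$ lies neither in $\mathfrak{b}'$ nor in any associated prime. Therefore $\mathfrak{b}'' := \mathfrak{b}' + (y - 1)$ is a proper ideal of height $n+1$; choosing any maximal ideal $\mathfrak{m} \supseteq \mathfrak{b}''$ and setting $\mathbb{L} := R'_{\mathbb{K}'}/\mathfrak{m}$, the images $v \in \mathbb{L}^n$ of $x_1,\dots,x_n$ satisfy $f_i(v) = 0$ and $g(v) = 1^{d_0} = 1$.

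To close the estimate, Proposition~\ref{gen-Bezout} gives $[\mathbb{L}:\mathbb{K}'] \le \dim_{\mathbb{K}'}(R'_{\mathbb{K}'}/\mathfrak{b}'') \le d^{m+1}$, so it suffices to set $L(d,m) := (m+2)\, d^{m+1} \log_2 d$. The hard step is the regularity of $y$ in the core claim: without $\bar g \ne 0$, the element $y$ would be nilpotent modulo $(f_1,\dots,f_m, g - y^{d_0})$ and $y - 1$ would become a unit modulo $\mathfrak{b}'$, collapsing the final cut; hypothesis (iv) intervenes at exactly this point, while hypothesis (i) does not seem to enter this plan and is presumably present mainly to align with the setup of Lemma~\ref{solve equation}.
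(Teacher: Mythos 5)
Your proposal is correct, but it takes a genuinely different route from the paper. The paper dehomogenizes in a different way: it cuts down by linear forms to $m+1$ variables (via Proposition~\ref{linear extended}), invokes Lemma~\ref{solve equation} to produce a nonzero point of $V(f_1,\dots,f_m)$ over a bounded extension, rules out $g=0$ at that point by a height count (the ideal $(\overline{f}_1,\dots,\overline{f}_m,\overline{g})$ is homogeneous of height $m+1=\dim R_0$, so its only zero is the origin), and finally rescales by a $d_0$-th root in a further degree-$\le d$ extension to normalize $g(v)=1$. Your auxiliary-variable construction with $g-y^{d_0}$ builds the normalization into the system, and your determinant/companion-matrix argument (this is where (iv) enters, exactly as you say) replaces both Lemma~\ref{solve equation} and the rescaling step. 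The payoff is substantial: you bypass the Ananyan--Hochster-type machinery behind Lemma~\ref{solve equation} (Proposition~\ref{small subalgebra general field} and the strength-stability results), so hypothesis (i) is indeed never used and your bound $L(d,m)=(m+2)d^{m+1}\log_2 d$ is explicit and far smaller than the paper's $d(m+2)K(d,m)$ with $K(d,m)=d^{I(d,m)}I(d,m)\log_2 d$; the paper's route, on the other hand, simply recycles Lemma~\ref{solve equation}, which it needs elsewhere anyway. Two small points you should make explicit: the regular sequence stays regular after the base change $\mathbb{K}\subseteq\mathbb{K}'$ (the paper notes this via exactness of $\otimes_\mathbb{K}\mathbb{K}'$; alternatively use the height characterization of Lemma~\ref{lem:regular}), and the properness of $\mathfrak{b}''=\mathfrak{b}'+(y-1)$ does not follow formally from $y-1$ avoiding the (homogeneous) associated primes of $\mathfrak{b}'$ -- avoidance only gives that $y-1$ is a non-zero-divisor; properness needs $y\notin\sqrt{\mathfrak{b}'}$, equivalently that $\bar y$ is not nilpotent in the connected graded quotient, which does follow since $\bar y$ is a non-zero-divisor in a nonzero ring. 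Your closing remark shows you are aware of exactly this mechanism, so this is a presentational gap rather than a mathematical one; your use of Proposition~\ref{gen-Bezout} to bound $\dim_{\mathbb{K}'}(R'_{\mathbb{K}'}/\mathfrak{b}'')$ is at the same level of rigor as the paper's own use of it in Lemma~\ref{solve equation}.
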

\begin{proof}
By assumption,  $(f_{1},\cdots,f_m)$ is a prime ideal and $f_1,\dots,  f_m,  g$ form a regular sequence in $R \coloneqq \mathbb{K}[x_1,\dots, x_n]$.  In particular,  we must have $m \le n -1$.  If $\mathbb{K}$ is algebraically closed,  then $\mathbb{L} = \mathbb{K}$ and the existence of $v$ is obvious by the Hilbert's Nullstellensatz and the fact that $(f_{1},\cdots,f_{m})$ is prime.

Let $\mathbb{F}$ be an extension of $\mathbb{K}$ such that $\vert\mathbb{F}\vert\ge d^{m+1}+1$ and $[\mathbb{F}:\mathbb{K}]\le m+2$.  We observe that $f_1,\dots,  f_m,  g$ form a regular sequence in $R \coloneqq \mathbb{F}[x_1,\dots, x_n]$.  By Proposition~\ref{linear extended},  there exist linear forms $l_1,\dots,  l_{n- m -1} \in R$ such that $f_1,\dots,  f_m,  g,  l_1,\dots,  l_{n-m-1}$ is a regular sequence.  We observe that $R/(l_1,\dots,  l_{n-m-1}) \simeq R_0 \coloneqq \mathbb{F}[x_1,\dots,  x_{m+1}]$.  We denote by $\overline{f}_1,\dots,  \overline{f}_m,  \overline{g}$ the images of $f_1,\dots,  f_m,  g$ in $R_0$,  respectively.  Let $\mathfrak{a} \coloneqq (\overline{f}_1,\dots,  \overline{f}_m)$ and $\mathfrak{b} = \mathfrak{a} + (\overline{g})$. Then $\height(\mathfrak{a}) = m$ and $\height(\mathfrak{b}) = m+1$.

Therefore,  Lemma~\ref{solve equation} implies that  there is some $u \in \mathbb{L}_0^{m+1} \setminus \{0\}$ such that $\overline{f}_1(u) = \cdots = \overline{f}_m(u)$,  where $\mathbb{L}_0$ is an extension of $\mathbb{F}$ with $[\mathbb{L}_0: \mathbb{F}] \le K(d,m)$.  We claim that $\overline{g}(u) \ne 0$.  Otherwise,  $u$ satisfies $\overline{f}_1(u) = \cdots = \overline{f}_m(u) = \overline{g}(u) = 0$.  Since $\mathfrak{b}$ is homogeneous and has height $m+1=\dim(R_{0})$, we obtain $X \coloneqq \Spec (R_0/\mathfrak{b}) = \{ \mathfrak{m}_0 \}$, where $\mathfrak{m}_0 \coloneqq (x_1,\dots,  x_{m+1})$. This contradicts to the fact that $u \ne 0$. 

Using the isomorphism $R/(l_1,\dots,  l_{n-m-1}) \simeq R_0$,  we may easily convert $u \in \mathbb{L}_0^{m+1}$ to $w \in \mathbb{L}_0^n$ satisfying $f_1(w) = \cdots = f_m(w) = 0$ and $g(w) \ne 0$.  Since $\deg(g) \le d$,  there exists an extension $\mathbb{L} / \mathbb{L}_0$ of degree at most $d$ to ensure the existence of $\lambda \in \mathbb{L} \setminus \{0\}$ such that $g (v) = 1$ where $v \coloneqq \lambda w \in \mathbb{L}^n$.  By construction,  we have 
\[
[\mathbb{L} : \mathbb{K}] = [\mathbb{L}:\mathbb{L}_0][\mathbb{L}_0 : \mathbb{F}] [\mathbb{F} : \mathbb{K}] \le d(m+2)K(d,m) \eqqcolon L(d,m).  \qedhere
\]
\end{proof}
\begin{remark}
Let $u = (u_1,\dots,  u_n) \in \overline{\mathbb{K}}^n$ be such that $f_1(u) = \cdots = f_m (u)$ but $g(u) = 1$.  It is clear that $u \in \mathbb{F}^n$ and $[\mathbb{F} : \mathbb{K}] < \infty$,  if we take $\mathbb{F} \coloneqq \mathbb{K}(u_1,\dots,  u_n)$.  A priori,  however,  $[\mathbb{F}: \mathbb{K}]$ may depend on $f_1,\dots,  f_m$ and $g$.  Proposition~\ref{solution of constructable set} exhibits an upper bound for $[\mathbb{F}: \mathbb{K}]$,  which only depends on $m$ and the degrees of $f_1,\dots,  f_m$ and $g$. 
\end{remark}

The following lemma is about the generators of polynomial ideals under field extensions.  We provide a proof due to the lack of appropriate references.
\begin{lemma}\label{infinite generate}
Let $S$ be a subset of $\mathbb{K}^{n}$ and let $\mathfrak{a} \coloneqq \left\lbrace
f \in \overline{\mathbb{K}}[x_1,\dots,  x_n]: f(u) = 0,\; u \in S
\right\rbrace$. Then $\mathfrak{a}$ is generated by polynomials in $\mathbb{K}[x_{1},\cdots,x_{n}]$.
\end{lemma}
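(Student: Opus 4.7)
The plan is to exhibit the generators explicitly as the intersection $\mathfrak{b} \coloneqq \mathfrak{a} \cap \mathbb{K}[x_1,\dots,x_n]$ and show that $\mathfrak{a} = \mathfrak{b} \cdot \overline{\mathbb{K}}[x_1,\dots,x_n]$. The direction $\mathfrak{b} \cdot \overline{\mathbb{K}}[x_1,\dots,x_n] \subseteq \mathfrak{a}$ is immediate since every element of $\mathfrak{b}$ vanishes on $S$. For the reverse inclusion, I would fix a $\mathbb{K}$-linear basis $\{e_i\}_{i \in I}$ of $\overline{\mathbb{K}}$ and use it to expand coefficients.

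More precisely, for any $f \in \mathfrak{a}$, each coefficient of $f$ lies in $\overline{\mathbb{K}}$ and can be expressed as a finite $\mathbb{K}$-linear combination of the $e_i$. Collecting terms by $e_i$ yields a unique decomposition $f = \sum_{i \in I} e_i f_i$ with $f_i \in \mathbb{K}[x_1,\dots,x_n]$ and only finitely many $f_i$ nonzero. The key observation is that for any $u \in S \subseteq \mathbb{K}^n$, the values $f_i(u)$ lie in $\mathbb{K}$, so the relation $0 = f(u) = \sum_{i \in I} e_i f_i(u)$ is a $\mathbb{K}$-linear dependence among the basis elements $e_i$; hence $f_i(u) = 0$ for every $i$. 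Thus each $f_i \in \mathfrak{b}$, and $f$ belongs to the $\overline{\mathbb{K}}$-module generated by $\mathfrak{b}$, a fortiori to $\mathfrak{b} \cdot \overline{\mathbb{K}}[x_1,\dots,x_n]$.

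Finally, since $\mathbb{K}[x_1,\dots,x_n]$ is Noetherian, $\mathfrak{b}$ is finitely generated by some $g_1,\dots,g_k \in \mathbb{K}[x_1,\dots,x_n]$, and therefore $\mathfrak{a} = (g_1,\dots,g_k) \overline{\mathbb{K}}[x_1,\dots,x_n]$, proving the lemma. There is no real obstacle here; the argument is a direct application of the flatness of $\overline{\mathbb{K}}/\mathbb{K}$, which at this concrete level just amounts to the uniqueness of coordinates in a basis expansion. The only mild point to be careful about is that a $\mathbb{K}$-basis of $\overline{\mathbb{K}}$ is infinite, so one must invoke that each individual $f$ involves only finitely many $e_i$ in its expansion, which is automatic because $f$ has finitely many nonzero coefficients.
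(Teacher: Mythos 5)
Your proposal is correct and rests on exactly the same key step as the paper's proof: expanding over a $\mathbb{K}$-linearly independent family of elements of $\overline{\mathbb{K}}$ and evaluating at points of $S\subseteq\mathbb{K}^{n}$, so that linear independence forces each $\mathbb{K}$-coefficient polynomial to vanish on $S$. The only difference is organizational: the paper applies this decomposition to a finite generating set $g_1,\dots,g_m$ of $\mathfrak{a}$, whereas you apply it to an arbitrary element to get $\mathfrak{a}=\bigl(\mathfrak{a}\cap\mathbb{K}[x_1,\dots,x_n]\bigr)\,\overline{\mathbb{K}}[x_1,\dots,x_n]$ and then invoke Noetherianity of $\mathbb{K}[x_1,\dots,x_n]$ at the end.
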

\begin{proof}
Suppose $g_{1},\cdots,g_{m}$ are polynomials in $\overline{\mathbb{K}}[x_{1},\cdots,x_{n}]$ such that $\mathfrak{a} = (g_1,\dots,  g_m)$.  Then there are $c_j \in \overline{\mathbb{K}}$ and $f_{ij} \in \mathbb{K}[x_1,\dots,  x_n]$,  where $1 \le i \le m$ and $1 \le j \le s$,  such that 
\begin{enumerate}[label=(\roman*)]
\item $c_1,\dots,  c_s$ are linearly independent over $\mathbb{K}$. 
\item For each $1 \le i \le m$,  $\sum_{j=1}^s c_j f_{ij} = g_i$.
\end{enumerate}
Since $\sum_{j=1}^s c_j f_{ij}(a) = g_i(a) = 0$ for each $1 \le i \le m$ and $a\in S$,  the $\mathbb{K}$-linear independence of $c_1,  \dots,  c_s$ implies that $f_{ij}(a) = 0$ for each $1 \le i \le m$,  $1 \le j \le s$ and $a\in S$.  Therefore,  $\mathfrak{a}$ is generated by $\{ f_{ij}: 1 \le i \le m,\; 1 \le j \le s \} \subseteq \mathbb{K}[x_{1},\cdots,x_{n}]$.
\end{proof}

\begin{proposition}\label{codim for infinite}
Let $\mathbb{K}$ be an infinite field and let $S$ be a subset of $\mathbb{K}^n$.  If $S \cap  \left( V \setminus \{0\} \right) \ne \emptyset$ for any $r$-dimensional subspace $V\subseteq \mathbb{K}^n$,  then $\dim \overline{S} \ge n-r$ where $\overline{S}$ denotes the Zariski closure of $S$ in $\overline{K}^n$.
\end{proposition}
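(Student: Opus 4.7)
I would argue by contradiction: assume $\dim \overline{S} \le n-r-1$, and construct an $r$-dimensional $\mathbb{K}$-subspace $V \subseteq \mathbb{K}^n$ with $V \cap S \subseteq \{0\}$, contradicting the hypothesis. First I pass to the Zariski closure $\widetilde{X} \subseteq \overline{\mathbb{K}}^n$ of the scaled set $\mathbb{K} \cdot S$. Three properties of $\widetilde{X}$ are needed: (i) $\widetilde{X}$ is defined over $\mathbb{K}$ by Lemma~\ref{infinite generate}; (ii) $\widetilde{X}$ is a cone, because $\mathbb{K}^*$-scaling preserves $\mathbb{K} \cdot S$ and hence the vanishing ideal $I(\widetilde{X})$; writing $f \in I(\widetilde{X})$ as $\sum_d f_d$ with $f_d$ homogeneous of degree $d$, a Vandermonde argument using $|\mathbb{K}^*| = \infty$ forces each $f_d \in I(\widetilde{X})$, so $I(\widetilde{X})$ is homogeneous; and (iii) $\dim \widetilde{X} \le \dim \overline{S} + 1 \le n-r$, via the multiplication morphism $\overline{\mathbb{K}} \times \overline{S} \to \overline{\mathbb{K}}^n$. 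Since $S \subseteq \widetilde{X}$, it suffices to find $V$ with $(V \otimes_{\mathbb{K}} \overline{\mathbb{K}}) \cap \widetilde{X} = \{0\}$.

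To build $V$, choose linearly independent linear forms $l_1, \ldots, l_{n-r} \in (\mathbb{K}^n)^*$ inductively so that $Y_i := \widetilde{X} \cap \{l_1 = \cdots = l_i = 0\}$ satisfies $\dim Y_i \le n-r-i$. For the inductive step with $\dim Y_i \ge 1$: every $\overline{\mathbb{K}}$-irreducible component $Z$ of $Y_i$ is itself a cone (the connected group $\overline{\mathbb{K}}^*$ permutes the finitely many components, but connectedness forces each to be fixed setwise) of dimension $\ge 1$ (else $\{0\}$ would be an isolated component, contradicting that every other cone component contains $0$). Hence $L_Z := \{l \in (\overline{\mathbb{K}}^n)^* : l|_Z \equiv 0\}$ is a proper $\overline{\mathbb{K}}$-subspace, and $L_Z \cap (\mathbb{K}^n)^*$ is a proper $\mathbb{K}$-subspace (otherwise $(\mathbb{K}^n)^*$ would span $L_Z$ over $\overline{\mathbb{K}}$, forcing $L_Z = (\overline{\mathbb{K}}^n)^*$). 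Since $\mathbb{K}$ is infinite, $(\mathbb{K}^n)^*$ is not a union of finitely many proper $\mathbb{K}$-subspaces, so I can pick $l_{i+1}$ outside $\spa\{l_1, \ldots, l_i\}$ and outside every $L_Z \cap (\mathbb{K}^n)^*$. This $l_{i+1}$ is not identically zero on any component of $Y_i$, whence $\dim Y_{i+1} \le \dim Y_i - 1$. When $\dim Y_i = 0$, the cone $Y_i$ equals $\{0\}$ and any linearly independent completion works. After $n-r$ steps, $V := \{l_1 = \cdots = l_{n-r} = 0\} \subseteq \mathbb{K}^n$ has dimension $r$ and satisfies $(V \otimes_{\mathbb{K}} \overline{\mathbb{K}}) \cap \widetilde{X} = \{0\}$; in particular $V \cap S \subseteq \{0\}$, the desired contradiction.

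The main obstacle is establishing $\widetilde{X}$ simultaneously as a cone, defined over $\mathbb{K}$, and of dimension at most $n-r$, and then propagating these properties through each cut $Y_i$. The cone structure is what ensures that every irreducible component of every $Y_i$ passes through the origin and has positive dimension, which is precisely what makes the annihilator $L_Z$ a proper subspace. Both this cone property and the existence of a $\mathbb{K}$-rational $l_{i+1}$ at each step depend essentially on $|\mathbb{K}| = \infty$: the Vandermonde argument for the first, and the non-covering of a vector space by finitely many proper subspaces for the second.
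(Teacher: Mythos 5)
Your argument is correct, but it takes a genuinely different route from the paper's. You homogenize: you replace $S$ by the cone $\widetilde{X}=\overline{\mathbb{K}\cdot S}$ (the Vandermonde step uses $\lvert\mathbb{K}\rvert=\infty$, as does the final subspace-avoidance), pay $+1$ in dimension via the multiplication morphism, and then slice inductively with $\mathbb{K}$-rational hyperplanes chosen to miss every irreducible component, so that after $n-r$ cuts the conical structure forces the intersection with the resulting $r$-dimensional $\mathbb{K}$-subspace to be exactly $\{0\}$, contradicting the hypothesis. The paper instead argues ideal-theoretically with the (generally non-homogeneous) vanishing ideal of $S$: it first invokes Lemma~\ref{long regular sequence-1} (which rests on the Adiprasito--Kazhdan--Ziegler result) to replace the generators by a regular sequence of length $s=\height(\mathfrak{a})$ inside their $\mathbb{K}$-span, then extends by linear forms while tracking heights and minimal primes (Lemmas~\ref{simple observation} and \ref{uncover}), and finally needs one extra linear form dodging the finitely many nonzero points of the resulting finite zero set --- precisely the step your coning trick makes unnecessary, at the cost of your $+1$. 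Both proofs ultimately hinge on the same elementary fact (Lemma~\ref{uncover} for infinite fields: a vector space over an infinite field is not a finite union of proper subspaces), but yours avoids Lemma~\ref{long regular sequence-1}/Proposition~\ref{long regular sequence-2} and the height bookkeeping entirely, which makes it more self-contained and geometric; the paper's formulation is structured to parallel the finite-field counterpart (Proposition~\ref{codim for finite}) and to reuse lemmas needed elsewhere. Two cosmetic remarks: your appeal to Lemma~\ref{infinite generate} (definedness of $\widetilde{X}$ over $\mathbb{K}$) is never actually used, since the $\mathbb{K}$-rationality of each $l_{i+1}$ only needs that each component spans a nonzero subspace of $\overline{\mathbb{K}}^n$; and the scaling-stability of each irreducible component $Z$, which you justify in one line, deserves the standard sentence that the closure of $\overline{\mathbb{K}}^{\ast}\cdot Z$ is irreducible, contains $Z$, and lies in the cone $Y_i$, hence equals $Z$.
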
 
\begin{proof}
Denote $R \coloneqq \mathbb{K}[x_1,\dots,  x_n]$ and $\overline{R} \coloneqq \overline{\mathbb{K}}[x_1,\dots,  x_n]$.  By Lemma~\ref{infinite generate},  the ideal $\mathfrak{a} \coloneqq \left\lbrace
f \in \overline{R}: f(a) = 0,\; a \in S
\right\rbrace$ is generated by some $f_{1},\dots,f_{m}\in R$.  In particular,  we have $(\mathfrak{a} \cap R) \otimes_{\mathbb{K}} \overline{\mathbb{K}} = \mathfrak{a}$.  Thus,  we obtain $\height(\mathfrak{a} \cap R)= \height(\mathfrak{a}) \eqqcolon s$ by Lemma~\ref{simple observation}.  Moreover,  Lemma~\ref{lem:dim+ht} implies that $\dim \overline{S} \coloneqq \dim \overline{R}/\mathfrak{a} = n - s$.  Hence it suffices to prove $s \le r$.

According to Proposition~\ref{long regular sequence-2},  there exist $g_{1},\dots,g_{s}\in \spa_{\mathbb{K}}(f_{1},\cdots,f_{m})$ such that $\height(\mathfrak{b})= s$ where $\mathfrak{b} \coloneqq (g_{1},\dots,g_{s})$.   Let $W \subseteq R$ be the vector space consisting of all linear forms and let $M = \{g_1,\dots,  g_s,  l_1,\dots,  l_t\}$ be a maximal subset of $\{g_1,\dots,  g_s\} \cup W$ satisfying: 
\begin{enumerate}[label=(\roman*)]
\item $\{g_1,\dots,  g_s\} \subseteq M$. 
\item For each $0 \le j \le t-1$,  we have $\height(\mathfrak{c}_{j+1}) = \height(\mathfrak{c}_j)+ 1$,  where $\mathfrak{c}_j \coloneqq (g_1,\dots,  g_s,  l_1,\dots , l_j) \subseteq R$.
\end{enumerate}  
Clearly,  elements in $M$ form a regular sequence in $R$ and $t \le n - s$.  By the maximality of $M$,  for any $l \in W$,  either $\height(\mathfrak{c}_t) = \height(\mathfrak{c}_t + (l))$ or $\mathfrak{c}_t + (l)= R$.  

If $s\ge r+1$ but $\mathfrak{c}_t + (l)= R $ for some $l \in W$,  then $n-r\ge n-s+1\ge t+1$.  Let $U \subseteq \mathbb{K}^n$ be the linear subspace defined by $l_1,\dots,  l_t,  l$.  Then $U \cap \overline{S} =\emptyset$, which contradicts to the assumption since $\dim_{\mathbb{K}} U \ge n - (t+1) \ge r$.  

Suppose that $\height(\mathfrak{c}_t) = \height(\mathfrak{c}_t + (l))$ for any $l \in W$.  Lemma~\ref{simple observation} implies that $\min ( \mathfrak{c}_t + (l) ) \subseteq \min ( \mathfrak{c}_t )$.  Hence there is some $\mathfrak{p}\in \min (\mathfrak{c}_t)$ such that $l \in \mathfrak{p}$.  As a consequence,  we have $W = \cup_{\mathfrak{p} \in \min (\mathfrak{c}_t)} (W \cap \mathfrak{p})$.  By Lemma~\ref{uncover},  there is $\mathfrak{q} \in \min(\mathfrak{c}_t)$ such that $(x_{1},\dots,x_{n})\subseteq \mathfrak{q}$,  from which we conclude that $\height(\mathfrak{c}_t) = n$.  In particular,  $t= n -s$ and $X \coloneqq \{v \in \mathbb{K}^n:  f(v) = 0,\; f \in \mathfrak{c}_t\}$ is a finite set.  If there is some $l\in W$ such that $l(v) \ne 0$ for any $v \in X$,  then $l_1,\dots,  l_{n-s},  l$ defines a linear subspace $V$ of $\mathbb{K}^n$ such that $\dim V \ge s-1$ and $S \cap (V \setminus \{0\}) = \emptyset$.  By assumption,  we must have $s-1 \le \dim V \le r - 1$.  

We prove the existence of $l$ by contradiction.  Suppose that every $l \in W$ vanishes at some $v \in X\setminus \{0\}$.  Then $W \subseteq \cup_{v \in X\setminus \{0\}} \mathfrak{a}_v$ where $\mathfrak{a}_v \coloneqq \{f \in R: f(v) = 0\}$.  By Lemma~\ref{uncover} again,  $W \subseteq \mathfrak{a}_v$ for some $v \in X$.  Since $W$ generates the ideal $(x_1,\dots,  x_n)$ in $R$,  we obtain $v = 0$ which contradicts to the choice of $v$.  
\end{proof}    
\section{Properties of the Geometric rank}\label{sec:geo}
In this section,  we explore some properties of the geometric rank.  We begin with the following lemma,  whose proof is omitted since it is verbatim the same as that of Lemma~5.3 in \cite{kopparty2020geometric}.
\begin{lemma}[Lower semi-continuity of $\GR(T)$]\label{Zariski closed of GR}
Let $\mathbb{K}$ be an algebraically closed field.  For any positive integer $r$,  the set $\{T\in \mathbb{K}^{n_1}\otimes\cdots\otimes\mathbb{K}^{n_d}: \GR(T)\le r\}$ is Zariski closed.
\end{lemma}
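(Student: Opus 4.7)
The plan is to realize the condition $\GR(T)\le r$ as a locus cut out by the dimension of a fiber, and then invoke the standard upper semi-continuity of fiber dimension for morphisms of varieties. Set $N := n_1+\cdots+n_{d-1}$. By unpacking the definition of $\GR(T)$, the inequality $\GR(T)\le r$ is equivalent to the statement that the affine variety
\[
X_T := \{(v_1,\dots,v_{d-1})\in (\mathbb{K}^{n_1})^{*}\times\cdots\times(\mathbb{K}^{n_{d-1}})^{*} : \langle T,v_1\otimes\cdots\otimes v_{d-1}\rangle = 0\}
\]
has dimension at least $N-r$. Writing the contraction $\langle T,v_1\otimes\cdots\otimes v_{d-1}\rangle\in\mathbb{K}^{n_d}$ in coordinates, one sees that $X_T$ is cut out by $n_d$ polynomial equations, whose coefficients depend linearly on the entries of $T$.

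The next step is to form the incidence variety
\[
W := \left\{(T,v_1,\dots,v_{d-1}) \in \mathbb{K}^{n_1\cdots n_d}\times(\mathbb{K}^{n_1})^{*}\times\cdots\times(\mathbb{K}^{n_{d-1}})^{*} : \langle T,v_1\otimes\cdots\otimes v_{d-1}\rangle = 0\right\}.
\]
Since $W$ is the common vanishing locus of $n_d$ polynomials in the joint coordinates on $\mathbb{K}^{n_1\cdots n_d}\times(\mathbb{K}^{n_1})^{*}\times\cdots\times(\mathbb{K}^{n_{d-1}})^{*}$, it is Zariski closed. Let $\pi\colon W\to \mathbb{K}^{n_1\cdots n_d}$ be the projection onto the tensor factor. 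Then $\pi^{-1}(T)=\{T\}\times X_T$, so that $\dim\pi^{-1}(T)=\dim X_T$ for every $T$; in particular every fiber is nonempty (it always contains the origin of the dual space), so the fiber dimension is a well-defined nonnegative integer.

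To conclude, I would invoke the classical upper semi-continuity theorem for fiber dimensions (as in Hartshorne, Exercise II.3.22, or Mumford's Red Book): for any morphism $\pi\colon W\to Y$ of algebraic varieties over an algebraically closed field, the set $\{y\in Y : \dim\pi^{-1}(y)\ge k\}$ is Zariski closed in $Y$ for every integer $k$. Taking $Y=\mathbb{K}^{n_1\cdots n_d}$ and $k=N-r$, one obtains that $\{T : \GR(T)\le r\}=\{T : \dim X_T\ge N-r\}$ is Zariski closed, as desired. Once the incidence correspondence is set up, the remainder is essentially mechanical; the only point requiring care is that the statement of semi-continuity is applied over $\mathbb{K}$ algebraically closed (which is precisely the hypothesis of the lemma), and that all fibers are nonempty so that no degenerate "empty fiber" issues arise.
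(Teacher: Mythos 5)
Your overall strategy---encoding $\GR(T)\le r$ as the condition $\dim X_T\ge N-r$ on the fibers of the projection $\pi\colon W\to \mathbb{K}^{n_1\cdots n_d}$ from the incidence variety---is exactly the route taken by the proof this lemma defers to (Lemma~5.3 of Kopparty--Moshkovitz--Zuiddam), and your reduction and the closedness of $W$ are correct. The gap is in the final step: the classical theorem you cite (Hartshorne, Ex.~II.3.22, or Mumford's Red Book) asserts upper semi-continuity of fiber dimension on the \emph{source}, i.e.\ that $E_k=\{w\in W:\dim_w \pi^{-1}(\pi(w))\ge k\}$ is closed in $W$; it does not say that $\{y:\dim \pi^{-1}(y)\ge k\}$ is closed in the \emph{target}. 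For a general morphism that set is only constructible (Chevalley), and closedness in the base requires $\pi$ to be proper (or at least a closed map), which your affine projection is not. Nonemptiness of all fibers does not rescue the statement: for $W=V(ty-1)\cup V(x,y)\subseteq \mathbb{A}^3$ projecting to the $t$-line, every fiber is nonempty, yet $\{t:\dim\pi^{-1}(t)\ge 1\}=\mathbb{A}^1\setminus\{0\}$ is not closed. So, as written, the decisive step rests on a false general principle.

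The repair is short and uses a feature of your fibers that you did not exploit: the defining equations of $X_T$ are multilinear in $(v_1,\dots,v_{d-1})$, hence homogeneous, so each $X_T$ is a cone through the origin; since the scaling action preserves every irreducible component, each component contains $0$ and therefore $\dim X_T=\dim_0 X_T$. Consequently $\{T:\dim X_T\ge k\}$ is the preimage of the closed set $E_k\subseteq W$ under the zero section $T\mapsto (T,0)$, hence Zariski closed, which is what you need. Alternatively, one can projectivize the $v$-variables (again using homogeneity), making $\pi$ a projective and hence proper morphism, for which semi-continuity of fiber dimension does hold on the base; either completion turns your proposal into a correct proof along the same lines as the one the paper refers to.
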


We recall from \eqref{eq:iso} that each tensor $T \in \mathbb{K}^{n_1}\otimes\cdots\otimes\mathbb{K}^{n_d}$ uniquely determines a multilinear function $f_T$ on $(\mathbb{K}^{n_1})^\ast \times \cdots \times (\mathbb{K}^{n_d})^\ast$.  In particular,  $f_T$ is an element in the ring of polynomials on $(\mathbb{K}^{n_1})^\ast \oplus \cdots \oplus (\mathbb{K}^{n_d})^\ast$. 
\begin{proposition}[Alternative description of $\GR(T)$]\label{computing GR}
For each $T \in \mathbb{K}^{n_1} \otimes \cdots \otimes \mathbb{K}^{n_d}$,  we have 
\[
\GR(T) = \height(\mathfrak{a})  =\min \left\lbrace \codim X_c +c:  c\in \mathbb{N} \right\rbrace,
\]
where $\mathfrak{a}$ is the ideal generated by $\{ f_{T_u}: u \in (\mathbb{K}^{n_d})^\ast \}$,  $X_{c} \coloneqq \{u \in (\mathbb{K}^{n_d})^\ast: \GR(T_u) = c \}$ and $T_u \coloneqq \langle T,  u \rangle \in \mathbb{K}^{n_1} \otimes \cdots \otimes \mathbb{K}^{n_{d-1}}$ for each $u \in (\mathbb{K}^{n_d})^\ast$.
\end{proposition}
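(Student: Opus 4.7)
The first equality $\GR(T) = \height(\mathfrak{a})$ is almost immediate from the setup. The vanishing condition $\langle T, v_1 \otimes \cdots \otimes v_{d-1}\rangle = 0$ in $\mathbb{K}^{n_d}$ holds if and only if $f_{T_u}(v_1, \ldots, v_{d-1}) = \langle T, v_1 \otimes \cdots \otimes v_{d-1} \otimes u\rangle$ vanishes for every $u \in (\mathbb{K}^{n_d})^*$, so the variety $\mathcal{W}$ appearing in the definition of $\GR(T)$ is precisely the zero locus $V(\mathfrak{a})$. Passing to $\overline{\mathbb{K}}$ via Lemma~\ref{simple observation}\ref{simple observation:item1} to preserve heights and applying Lemma~\ref{lem:dim+ht} then gives $\GR(T) = \codim \mathcal{W} = \height(\mathfrak{a})$.

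For the second equality, the plan is to introduce the incidence variety
\[
\mathcal{I} := \bigl\{(v_1, \ldots, v_{d-2}, u) \in (\mathbb{K}^{n_1})^* \times \cdots \times (\mathbb{K}^{n_{d-2}})^* \times (\mathbb{K}^{n_d})^* : \langle T_u, v_1 \otimes \cdots \otimes v_{d-2}\rangle = 0\bigr\}
\]
and compute $\codim \mathcal{I}$ in two complementary ways. First, projecting to $(\mathbb{K}^{n_d})^*$, the fiber over $u$ is, by the definition of $\GR$ applied to the order $d-1$ tensor $T_u$, a subvariety of codimension $\GR(T_u)$ in $(\mathbb{K}^{n_1})^* \times \cdots \times (\mathbb{K}^{n_{d-2}})^*$; stratifying $\mathcal{I}$ as the disjoint union of the preimages of the $X_c$ and applying the fiber-dimension formula on each stratum yields $\codim\mathcal{I} = \min_c\{\codim X_c + c\}$. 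Second, projecting both $\mathcal{W}$ and $\mathcal{I}$ to the common base $(\mathbb{K}^{n_1})^* \times \cdots \times (\mathbb{K}^{n_{d-2}})^*$, the fiber of $\mathcal{W}$ over $(v_1, \ldots, v_{d-2})$ is the left kernel of the matrix $M := \langle T, v_1 \otimes \cdots \otimes v_{d-2}\rangle \in \mathbb{K}^{n_{d-1}} \otimes \mathbb{K}^{n_d}$ (of dimension $n_{d-1} - \rank M$), while the fiber of $\mathcal{I}$ is the right kernel of $M$ (of dimension $n_d - \rank M$). Stratifying the base by $Y_r := \{(v_1, \ldots, v_{d-2}) : \rank M = r\}$ produces $\dim\mathcal{W} = \max_r(\dim Y_r + n_{d-1} - r)$ and $\dim\mathcal{I} = \max_r(\dim Y_r + n_d - r)$, so $\dim\mathcal{W} + n_d = \dim\mathcal{I} + n_{d-1}$; since the two ambient spaces also differ by exchanging an $n_{d-1}$-factor for an $n_d$-factor, this is exactly the identity $\codim\mathcal{W} = \codim\mathcal{I}$. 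Combining with the first step closes the chain $\GR(T) = \height(\mathfrak{a}) = \codim\mathcal{I} = \min_c\{\codim X_c + c\}$.

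The main technical obstacle is justifying the fiber-dimension formula when $\mathcal{W}$, $\mathcal{I}$, and the strata $X_c$, $Y_r$ are neither irreducible nor necessarily defined over an algebraically closed field a priori. The plan is to work throughout over $\overline{\mathbb{K}}$ and observe that for each fixed $r$ the preimage of $Y_r$ in $\mathcal{W}$ (respectively $\mathcal{I}$) is Zariski-locally a trivial affine-space bundle over $Y_r$ of the stated rank, so its Krull dimension is exactly $\dim Y_r + (n_{d-1} - r)$ (respectively $\dim Y_r + (n_d - r)$); an analogous remark handles the projection $\mathcal{I} \to (\mathbb{K}^{n_d})^*$, and every irreducible component of $\mathcal{W}$ or $\mathcal{I}$ lies over a unique stratum, so the supremum of the stratum dimensions coincides with the Krull dimension of the total space.
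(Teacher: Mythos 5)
Your argument is correct in substance, and its core is the same as the paper's: stratify by the value of $\GR(T_u)$, apply a constant-fiber-dimension formula on each stratum, and convert the resulting maximum of dimensions into $\min_c\{\codim X_c + c\}$. The organizational difference is real, though. The paper works directly with the variety $Z=\{(u_2,\dots,u_d):\langle T,u_2\otimes\cdots\otimes u_d\rangle=0\}$, whose projection onto the $u_d$-coordinate has the $\GR$-varieties of the slices $T_{u_d}$ as its fibers, so a single application of the fiber-dimension theorem (applied to $\pi^{-1}(X_c)\to X_c$) finishes the proof. You instead keep the variety $\mathcal{W}$ in modes $1,\dots,d-1$ (the one that literally equals $V(\mathfrak{a})$, so the first equality is transparent), introduce the incidence variety $\mathcal{I}$, and add the left/right-kernel rank-stratification step to get $\codim\mathcal{W}=\codim\mathcal{I}$. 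That extra step is exactly the instance of mode-symmetry of $\GR$ that the paper leaves implicit: its $Z$ lives in modes $2,\dots,d$ while $\height(\mathfrak{a})$ is the codimension of a variety in modes $1,\dots,d-1$ (and your convention for $\GR(T_u)$, with free mode $n_{d-1}$, likewise differs from the paper's, with free mode $n_1$). Making this identification explicit via the kernel duality is a small but genuine bonus of your route; the cost is one more stratified dimension count.

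Two technical points need patching before the write-up is complete. First, the stratification by the sets $X_c$ is only legitimate once you know they are locally closed (otherwise $\codim X_c$ and the dimension formula over each stratum need interpretation); this is exactly where the paper invokes the lower semi-continuity of $\GR$ (Lemma~\ref{Zariski closed of GR}), pulled back along the linear map $u\mapsto T_u$, and your proposal never secures it. Second, your justification of the dimension count, ``Zariski-locally a trivial affine-space bundle,'' is fine for the rank strata $Y_r$, but the claimed ``analogous remark'' does not apply to the projection $\mathcal{I}\to(\mathbb{K}^{n_d})^\ast$: over $X_c$ the fibers are the $\GR$-varieties of the slices, which have constant dimension but are not linear and do not form a locally trivial family. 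There you need the general constant-fiber-dimension theorem, as the paper does via \cite[Proposition~10.6.1~(iii)]{grothendieck1966elements}. Relatedly, an irreducible component of $\mathcal{W}$ or $\mathcal{I}$ need not lie over a single stratum; what you actually need (and what is true) is that the dimension of a Noetherian space is the maximum of the dimensions of the pieces of any finite partition into locally closed subsets. With these standard ingredients inserted, your chain $\GR(T)=\codim\mathcal{W}=\height(\mathfrak{a})$ and $\codim\mathcal{W}=\codim\mathcal{I}=\min_c\{\codim X_c+c\}$ goes through.
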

\begin{proof}
The first equality follows immediately from Lemma~\ref{lem:dim+ht},  so it suffices to prove the second.  Without loss of generality,  we may assume that $\mathbb{K}$ is algebraically closed.  Denote $Z \coloneqq \{(u_{2},\dots,u_{d})\in (\mathbb{K}^{n_2})^{\ast} \times \cdots \times (\mathbb{K}^{n_d})^{\ast}: \langle T,  u_{2} \otimes \cdots \otimes u_{d} \rangle = 0 \}$.  By definition,   we have 
\[
\langle T,  u_2 \otimes \cdots \otimes u_d \rangle = \langle T_{u_d},  u_2 \otimes \cdots \otimes u_{d-1} \rangle,   \quad \GR(T) = \codim Z.   
\] 
We consider maps 
\begin{align*}
&\tau: (\mathbb{K}^{n_{d}})^{\ast} \to \mathbb{K}^{n_1} \otimes \cdots \otimes \mathbb{K}^{n_{d-1}}, \quad \tau(u) = T_u,  \\
&\pi: Z\to(\mathbb{K}^{n_{d}})^{\ast},\quad \pi(u_{2},\dots,u_{d}) = u_{d}.
\end{align*}
Given an integer $c \ge 0$,  we let $W_c \coloneqq \{u\in (\mathbb{K}^{n_{d}})^{\ast}: \GR(T_{u})\le c\}$.  Then clearly we have 
\[
W_c = \tau^{-1} \left( \{ S \in \mathbb{K}^{n_1} \otimes \cdots \otimes \mathbb{K}^{n_{d-1}}: \GR(S) \le c \} \right).
\]
This together with Lemma~\ref{Zariski closed of GR} implies $W_c$ is Zariski closed.  Moreover,  we have $X_c =W_c \setminus W_{c-1}$ where $W_{-1} \coloneqq \emptyset$.  Then $X_c$ is quasi-affine and $\cup_{c=0}^{m} \pi^{-1}(X_c) = Z$,  where $m \coloneqq n_2 + \cdots + n_{d-1}$.  In fact,  for any $u \in (\mathbb{K}^{n_d})^{\ast}$,  we have $\GR(T_u) \le m$ by definition.  

For each $u \in X_c $,  we notice that $\GR(T_{u}) = c$ and $\pi^{-1}(u) \simeq \{(u_{2},\dots,u_{d-1})\in (\mathbb{K}^{n_2})^{*} \times \cdots \times (\mathbb{K}^{n_{d-1}})^{*}: \langle T_{u}, x_{2} \otimes \cdots \otimes x_{d-1} \rangle = 0\}$.  This implies that $\dim \pi^{-1}(u) = n_2 + \cdots + n_{d-1} - c$ and 
\[
\dim \pi^{-1} (X_c) = \dim X_c + \left( n_2 + \cdots + n_{d-1} - c \right),
\]
by \cite[Proposition~10.6.1~(iii)]{grothendieck1966elements}.  Consequently,  we obtain 
\[
\codim Z =(n_2 + \cdots + n_d) -  \max_{0 \le c \le m } \{ \dim \pi^{-1} (X_c) \} = n_d - \max_{0 \le c \le m} \{ \dim X_c - c \} = \min_{0 \le r \le m} \{\codim X_c + c \}.
\]
For $c > m$,  we have $X_c = \emptyset$.  Thus,  $\codim X_c + c > n_d + m > \codim Z$ and this completes the proof.  
\end{proof}
\begin{remark}
When $d = 3$,  Proposition~\ref{computing GR} coincides with \cite[Theorem~3.1]{kopparty2020geometric}.  For $d\ge 4$,  however, Proposition~\ref{computing GR} is different from the generalization of \cite[Theorem~3.1]{kopparty2020geometric} stated on page~12 of  \cite{kopparty2020geometric},  although its proof is similar to that of \cite[Theorem~3.1]{kopparty2020geometric}.
\end{remark}

By Proposition~\ref{computing GR},  $\codim X_c + c$ provides an upper bound for $\GR(T)$ for any $c \in \mathbb{N}$.  When $\mathbb{K}$ is a finite field,  it is also possible to estimate $\GR(T)$ by $|X_c|$.
\begin{proposition}[Estimate of $\GR(T)$]\label{codim for finite}
Let $m,  c$ be positive integers.  If $T \in \mathbb{F}_q^{n_1} \otimes \cdots \otimes \mathbb{F}_q^{n_d}$ satisfies $|X_c| \ge m$,  then $\GR(T) \le C_2(d) \left( c/C_1(d) + n_d -  \log_q m \right)$ where $C_1$ and $C_2$ are functions on $\mathbb{N}$ in Theorem~\ref{AR stability}.
\end{proposition}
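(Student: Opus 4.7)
The plan is to bound $\GR(T)$ via the stability chain $\GR(T) \le C_2(d)\AR(T) = -C_2(d)\log_q \bias(T)$ supplied by Theorem~\ref{AR stability}, thereby reducing the task to producing a lower bound on $\bias(T)$ in terms of $m$, $c$, and $n_d$. Concretely, I aim to establish $\bias(T) \ge m \cdot q^{-n_d - c/C_1(d)}$, from which taking $-\log_q$ and multiplying by $C_2(d)$ yields exactly the claimed inequality.

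To obtain this bias bound I would first decompose $\bias(T)$ along the last tensor factor. Using the identity $\langle T, u_2 \otimes \cdots \otimes u_d\rangle = \langle T_{u_d}, u_2 \otimes \cdots \otimes u_{d-1}\rangle$ and conditioning on $u_d$, the definition \eqref{eq:bias} gives
\[
\bias(T) \;=\; \frac{1}{q^{n_d}} \sum_{u \in (\mathbb{F}_q^{n_d})^\ast} \bias(T_u) \;\ge\; \frac{1}{q^{n_d}} \sum_{u \in X_c} \bias(T_u),
\]
where the inequality simply uses nonnegativity of each term. Next, for every $u \in X_c$, the defining equality $\GR(T_u) = c$ combined with the lower half $C_1(d)\AR(T_u) \le \GR(T_u)$ of Theorem~\ref{AR stability} yields $\AR(T_u) \le c/C_1(d)$, so that $\bias(T_u) = q^{-\AR(T_u)} \ge q^{-c/C_1(d)}$. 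Combining this with the hypothesis $|X_c| \ge m$ gives
\[
\bias(T) \;\ge\; \frac{|X_c|}{q^{n_d}}\, q^{-c/C_1(d)} \;\ge\; m \cdot q^{-n_d - c/C_1(d)},
\]
which is what I wanted.

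I do not anticipate a genuine obstacle: the argument is a direct conditioning-and-averaging computation, with the only subtlety being that Theorem~\ref{AR stability} is used in both directions — once to turn the pointwise hypothesis $\GR(T_u)=c$ into an analytic-rank (equivalently bias) estimate for the slice $T_u$, and once at the end to convert the aggregated bias estimate back into a geometric-rank bound on $T$.
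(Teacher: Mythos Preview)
Your proposal is correct and follows essentially the same approach as the paper: decompose $\bias(T)$ as an average of $\bias(T_u)$ over $u\in(\mathbb{F}_q^{n_d})^\ast$, use the lower inequality in Theorem~\ref{AR stability} to bound each $\bias(T_u)$ for $u\in X_c$, and then convert the resulting bias bound back into a geometric-rank bound via the upper inequality. The paper's proof is virtually identical, differing only cosmetically in that it first passes to the superset $\{u:\AR(T_u)\le c/C_1(d)\}\supseteq X_c$ before summing.
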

\begin{proof}
We denote $T_u \coloneqq \langle T,  u \rangle$ for $u \in (\mathbb{F}_q^{n_d})^\ast$.  Since $|X_c| \ge m$,  Theorem~\ref{AR stability} implies 
\[
\left\lvert \left\lbrace 
u \in (\mathbb{F}_{q}^{n_d})^\ast:  
\bias(T_u) \ge q^{-c/C_1(d)} 
\right\rbrace
\right\rvert  = 
\left\lvert \left\lbrace 
u \in (\mathbb{F}_{q}^{n_d})^\ast:  \AR( T_u ) \le c/C_1(d)
\right\rbrace
\right\rvert  \ge m.
\]
We have 
\[
q^{-\AR(T)} = \bias(T) = q^{-n_{d}}\sum_{u \in (\mathbb{F}_{q}^{n_d})^\ast} \bias( T_u ) \ge m q^{-c/C_1(d) - n_d},
\]
where the second equality follows from \eqref{eq:bias}.  Hence we conclude by Theorem~\ref{AR stability} that 
\[
\GR(T) \le C_2(d) \AR(T) \le C_2(d) \left(  c/C_1(d)  + n_d - \log_q m \right).  \qedhere
\]
\end{proof}

By the canonical isomorphism \eqref{eq:iso},  each $T\in \mathbb{K}^{n_1} \otimes \cdots \otimes \mathbb{K}^{n_d}$ can be identified with a multilinear polynomial $f_T$ on $(\mathbb{K}^{n_1})^\ast \oplus \cdots \oplus (\mathbb{K}^{n_d})^\ast$.  Hence,  both $\GR(T)$ and $\str(T) \coloneqq \str(f_T)$ are well-defined.  In fact,  they are related by an inequality.  
\begin{proposition}[$\GR(T)$ vs.  $\str(T)$]\label{str vs GR} 
Let $\mathbb{K}$ be a field.  For each $T\in \mathbb{K}^{n_1} \otimes \cdots \otimes \mathbb{K}^{n_d}$,  we have $2\str(T) \ge \GR(T)$.
\end{proposition}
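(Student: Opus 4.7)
The plan is to combine the ideal-height characterization of the geometric rank (Proposition~\ref{computing GR}) with the Jacobian-height bound for strength (Lemma~\ref{lem:str-height}). The key observation I will exploit is that the ideal whose height equals $\GR(T)$ is, after a harmless polynomial extension, a subideal of the Jacobian ideal of $f_T$.

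Concretely, I will let $R' \coloneqq \mathbb{K}[v_1^{(1)},\dots,v_{d-1}^{(n_{d-1})}]$ and $R \coloneqq R'[v_d^{(1)},\dots,v_d^{(n_d)}]$, so that $f_T$ is naturally a multilinear form in $R$. By Proposition~\ref{computing GR}, $\GR(T) = \height(\mathfrak{a})$, where $\mathfrak{a} \subseteq R'$ is the ideal generated by $\{f_{T_u} : u \in (\mathbb{K}^{n_d})^\ast\}$. The first step is to identify the generators explicitly: taking $u = e_i^\ast$ in the standard dual basis, a direct computation shows $f_{T_{e_i^\ast}} = \partial f_T / \partial v_d^{(i)}$ for each $i \in [n_d]$, and by linearity these span all $f_{T_u}$. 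Hence the extension $\mathfrak{a} R \subseteq R$ is generated by $\{\partial f_T/\partial v_d^{(i)} : i \in [n_d]\}$, a subset of the generators of the Jacobian ideal $\mathfrak{a}(f_T)$. In particular $\mathfrak{a} R \subseteq \mathfrak{a}(f_T)$.

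Next, I will invoke two elementary facts. Monotonicity of height under ideal inclusion yields $\height(\mathfrak{a} R) \le \height(\mathfrak{a}(f_T))$. Moreover, the polynomial extension $R' \hookrightarrow R$ preserves heights of ideals: indeed, Lemma~\ref{lem:dim+ht} applied to both rings together with the isomorphism $R/\mathfrak{a} R \cong (R'/\mathfrak{a})[v_d^{(1)},\dots,v_d^{(n_d)}]$ gives $\dim R/\mathfrak{a} R = \dim R'/\mathfrak{a} + n_d$, whence $\height(\mathfrak{a} R) = \height(\mathfrak{a})$. Chaining everything together,
\[
\GR(T) = \height(\mathfrak{a}) = \height(\mathfrak{a} R) \le \height(\mathfrak{a}(f_T)) \le 2\str(f_T) = 2\str(T),
\]
where the penultimate inequality is precisely Lemma~\ref{lem:str-height}.

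I do not anticipate a substantive obstacle, since the two input results do essentially all the work. The only care needed is bookkeeping: the ideal $\mathfrak{a}$ delivered by Proposition~\ref{computing GR} lives in $R'$, whereas $\mathfrak{a}(f_T)$ lives in the larger ring $R$, so one must check that extending scalars along $R' \hookrightarrow R$ leaves the height unchanged before comparing the two ideals inside $R$.
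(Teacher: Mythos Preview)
Your proof is correct and follows essentially the same route as the paper's: identify $\GR(T)$ with the height of the ideal generated by the partial derivatives of $f_T$ with respect to one block of variables, observe that this ideal sits inside the full Jacobian ideal $\mathfrak{a}(f_T)$, and finish with Lemma~\ref{lem:str-height}. The only difference is cosmetic: the paper contracts on the first factor and rederives $\GR(T)=\height(\mathfrak{a})$ directly from the definition, whereas you contract on the last factor and quote Proposition~\ref{computing GR}; you are also more explicit than the paper about why the passage from $R'$ to $R$ preserves heights, a step the paper leaves implicit.
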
 
\begin{proof}
We let $\{x_{j}:  1 \le j \le n_1\}$ be a basis of $\mathbb{K}^{n_1}$.  Then we observe 
\begin{align*}
\GR(T) &= \codim \left( \{ (u_2,\dots,  u_d) \in (\mathbb{K}^{n_2})^\ast \times \cdots \times (\mathbb{K}^{n_d})^\ast:  \langle T,  u_2 \otimes \cdots \otimes u_d  \rangle = 0 \} \right) \\
&= \codim \left( \{(u_2,\dots,  u_d) \in (\mathbb{K}^{n_2})^\ast \times \cdots \times (\mathbb{K}^{n_d})^\ast:  \partial f_T /\partial x_j (u_2,\dots,  u_d) = 0,\; 1 \le j \le n_1 \} \right) \\
&= \height (  \mathfrak{a} ), 
\end{align*}
where $\mathfrak{a} \coloneqq ( \partial f_T/\partial x_1,\dots,   \partial f_T/\partial x_{n_1})$ and the last equality follows from Lemma~\ref{lem:dim+ht}.  By Lemma~\ref{lem:str-height},  we have $2 \str(T)\ge \height(\mathfrak{a}) = \GR(T)$.
\end{proof}

Let $V$ be a $\mathbb{K}$-linear subspace of $\mathbb{K}^{n_1} \otimes \cdots \otimes \mathbb{K}^{n_d}$.  We define \[
\GR(V) \coloneqq \min \{\GR(T):  T \in V\},\quad \PR(V) \coloneqq \min \{\PR(T):  T\in V\}.
\]
We denote $V^{\overline{\mathbb{K}}}\coloneqq V \otimes_{\mathbb{K}} \overline{\mathbb{K}}$.  Then $V^{\overline{\mathbb{K}}}$ is a $\overline{\mathbb{K}}$-linear subspace of $\overline{\mathbb{K}}^{n_1} \otimes_{\overline{\mathbb{K}}} \cdots \otimes_{\overline{\mathbb{K}}} \overline{\mathbb{K}}^{n_d}$ and by definition,  
\[
\GR ( V^{\overline{\mathbb{K}}} ) \coloneqq \left\lbrace \GR_{\overline{\mathbb{K}}} (T):  T \in V^{\overline{\mathbb{K}}} \right\rbrace,\quad \PR ( V^{\overline{\mathbb{K}}} ) \coloneqq \left\lbrace \PR_{\overline{\mathbb{K}}} (T):  T \in V^{\overline{\mathbb{K}}} \right\rbrace.
\]
Here $\GR_{\overline{\mathbb{K}}} (T)$ (resp.   $\PR_{\overline{\mathbb{K}}} (T)$) is the geometric rank (resp.  partition rank) of $T \in V^{\overline{\mathbb{K}}}$ as a tensor over $\overline{\mathbb{K}}$.
\begin{proposition}[Stability of $\GR(V)$]\label{minGR stability}
There exists a function $J$ on $\mathbb{N} \times \mathbb{N} \times \mathbb{N}$ such that for any linear subspace $V\subseteq \mathbb{K}^{n_{1}}\otimes\cdots\otimes\mathbb{K}^{n_{d}}$ with $\dim V \le m$ and $\GR(V^{\overline{\mathbb{K}}}) \le r$,  we have $\GR(V) \le J(d,  m, r)$.
\end{proposition}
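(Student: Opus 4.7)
The plan is to translate the problem, via the canonical isomorphism~\eqref{eq:iso}, into a statement about minimum strength of a subspace of forms, where Theorem~\ref{stability of minstr} (stability of minimum strength) applies. Set $W \coloneqq \{f_{T}: T\in V\} \subseteq \mathbb{K}[x_{1},\dots,x_{N}]_{d}$, where $N \coloneqq n_{1}+\cdots+n_{d}$. Then $W$ is a graded subspace of dimension $\dim W = \dim V \le m$, and its base change $W^{\overline{\mathbb{K}}}$ corresponds to $V^{\overline{\mathbb{K}}}$ under the extended isomorphism. The overall strategy is: bound $\str(W^{\overline{\mathbb{K}}})$ from $\GR(V^{\overline{\mathbb{K}}}) \le r$; descend to a bound on $\str(W)$; then convert back to $\GR(V)$ via Proposition~\ref{str vs GR}.

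First, I would bound $\str(W^{\overline{\mathbb{K}}})$ by a function of $d$ and $r$. Choose $T \in V^{\overline{\mathbb{K}}}$ with $\GR_{\overline{\mathbb{K}}}(T) \le r$. By the linear equivalence $\PR \asymp \GR$ over algebraically closed fields from~\cite{cohen2023partition}, there is a constant $c(d)$ such that $\PR_{\overline{\mathbb{K}}}(T) \le c(d)\,r$. Fix a partition-rank decomposition $T = \sum_{i=1}^{s} U_{i} \otimes V_{i}$ with $s \le c(d)\,r$, where each $U_{i} \otimes V_{i}$ is supported on a proper bipartition $J_{i} \subsetneq [d]$. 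Then $f_{T} = \sum_{i=1}^{s} f_{U_{i}} \cdot f_{V_{i}}$, with each factor a multilinear form of positive degree ($\lvert J_{i}\rvert$ and $d-\lvert J_{i}\rvert$ respectively). Hence $\str_{\overline{\mathbb{K}}}(f_{T}) \le s \le c(d)\,r$, and so $\str(W^{\overline{\mathbb{K}}}) \le c(d)\,r$.

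Next, I would apply Theorem~\ref{stability of minstr} (under its hypothesis $\ch(\mathbb{K}) = 0$ or $\ch(\mathbb{K})>d$) to the graded subspace $W \subseteq \mathbb{K}[x_{1},\dots,x_{N}]_{d}$ with $\dim W \le m$ and $\str(W^{\overline{\mathbb{K}}}) \le c(d)\,r$: this yields $\str(W) \le F(d, m, c(d)\,r)$. Finally, pick $T'\in V$ with $\str(f_{T'}) = \str(W)$; by Proposition~\ref{str vs GR}, $\GR(T') \le 2 \str(f_{T'}) \le 2 F(d, m, c(d)\,r)$, so $\GR(V) \le J(d, m, r) \coloneqq 2 F(d, m, c(d)\,r)$, as required.

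The main obstacle is Step~1, specifically the nontrivial bound $\PR_{\overline{\mathbb{K}}}(T) \le c(d)\GR_{\overline{\mathbb{K}}}(T)$ imported from~\cite{cohen2023partition}; the remaining steps are formal combinations of Theorem~\ref{stability of minstr} and Proposition~\ref{str vs GR}. A secondary subtlety is that Theorem~\ref{stability of minstr} requires a characteristic hypothesis absent from the statement, so either the hypothesis is implicit in the intended applications of Proposition~\ref{minGR stability} or a separate argument is needed in small positive characteristic. An alternative route to Step~1 that bypasses~\cite{cohen2023partition} would apply Theorem~\ref{codim vs str} to $f_{T}$, after first showing $\height(\mathfrak{a}(f_{T})^{\overline{\mathbb{K}}}) \le d\,r$ via the mode-symmetry of the geometric rank (each block of partials yields an ideal of height equal to $\GR(T)$) combined with Proposition~\ref{long regular sequence-2} and Theorem~\ref{thm:principal ideal} to control the sum of $d$ such ideals.
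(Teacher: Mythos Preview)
Your argument is structurally the same as the paper's: both routes sandwich the problem between $\GR$ and an auxiliary rank, invoke a stability result from \cite{bik2025strength} to descend from $\overline{\mathbb{K}}$ to $\mathbb{K}$, and use \cite{cohen2023partition} to bound the auxiliary rank of $V^{\overline{\mathbb{K}}}$ in terms of $r$. The difference is in the auxiliary rank. You pass through strength: $\GR(T') \le 2\str(f_{T'})$, then Theorem~\ref{stability of minstr} for $\str(W)$ in terms of $\str(W^{\overline{\mathbb{K}}})$, then $\str(W^{\overline{\mathbb{K}}}) \le \PR(V^{\overline{\mathbb{K}}}) \le c(d)r$. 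The paper instead stays with partition rank throughout: $\GR(V) \le \PR(V)$, then \cite[Theorem~1.7.2]{bik2025strength} bounds $\PR(V)$ by a function of $d$, $m$, and $s\coloneqq\PR(V^{\overline{\mathbb{K}}})$, and finally $s \le (2^{d-1}-1)r$ by \cite{cohen2023partition}.

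The practical difference is exactly the one you flagged: Theorem~\ref{stability of minstr} (\cite[Theorem~1.5.3]{bik2025strength}) carries the hypothesis $\ch(\mathbb{K})=0$ or $\ch(\mathbb{K})>d$, whereas the partition-rank stability \cite[Theorem~1.7.2]{bik2025strength} used by the paper is characteristic-free. So your argument proves the proposition only under that extra assumption, while the paper's proof delivers the unconditional statement. This does not harm the downstream application, since Proposition~\ref{minGR stability} is invoked only inside Lemma~\ref{d-minGR-2}, which already assumes $\ch(\mathbb{K})=0$ or $\ch(\mathbb{K})>d$; but as a proof of the proposition as written, it leaves the small-characteristic case open. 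Your suggested alternative via Theorem~\ref{codim vs str} would not help here, since that theorem is also stated only over algebraically closed fields and you would still need a descent step.
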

\begin{proof}  
We recall from \cite[Theorem~5]{kopparty2020geometric} that $\GR(T) \le \PR(T)$ for any $T\in \mathbb{K}^{n_{1}}\otimes\cdots\otimes\mathbb{K}^{n_{d}}$.  This implies that $\GR(V)\le \PR(V)$.  By \cite[Theorem~1.7.2]{bik2025strength},  there is a function $C$ on $\mathbb{N}$ such that $\PR(V) \le  C(d) m^3 s^{d-1} \log\left( s + m \right)$,  where $s \coloneqq \PR(V^{\overline{\mathbb{K}}})$.  According to \cite[Corollary~3]{cohen2023partition},  we also have $\PR_{\overline{K}} (T) \le (2^{d-1} - 1) \GR_{\overline{K}}(T)$ for each $T \in V^{\overline{\mathbb{K}}}$.  Thus,  $s \le (2^{d-1} - 1) r$ and  
\[
\GR(V) \le \PR(V) \le C(d) m^3 s^{d-1} \log(s+ m)\le J(d,m,r), 
\]
where $J(d,m,r) \coloneqq  C(d) m^3 (2^{d-1} - 1)^{d-1} r^{d-1} \log( (2^{d-1} - 1)r  + m)$.
\end{proof}
\section{Proof of Theorem~\ref{thm:general}}\label{sec:1.1}
The goal of this section is to prove Theorem~\ref{thm:general}.  We first establish the lemma that follows.    
\begin{lemma}\label{d-minGR-2} 
There are functions $M_1$ and $M_2$ on $\mathbb{N} \times \mathbb{N}$ with the following property.  Suppose $\mathbb{K}$ is a field and $T\in \mathbb{K}^{n_{1}}\otimes\cdots\otimes\mathbb{K}^{n_{d}}$ is a tensor with $d$-slices $T_1,\dots,  T_c \in \mathbb{K}^{n_{1}}\otimes\cdots\otimes\mathbb{K}^{n_{d-1}}$ satisfying 
\begin{enumerate}[label = (\roman*)] 
\item Either $\ch(\mathbb{K}) = 0$ or $\ch(\mathbb{K}) > d$.
\item $T_1,\dots, T_c$ are $\mathbb{K}$-linearly independent. 
\item $\GR \left(  \spa_{\mathbb{K}} \{T_1,\dots,  T_c \} \right) \ge M_1(d,c)$. 
\end{enumerate}
Then there is a degree $M_2(d,c)$ extension $\mathbb{L}/\mathbb{K}$ such that $\Q_{\mathbb{L}} (T) \ge c$.  Here $\Q_{\mathbb{L}} (T)$ denotes the subrank of $T$ as a tensor over $\mathbb{L}$ via the natural inclusion $\mathbb{K}^{n_1} \otimes \cdots \otimes \mathbb{K}^{n_d} \subseteq \mathbb{L}^{n_1} \otimes \cdots \otimes \mathbb{L}^{n_d}$.
\end{lemma}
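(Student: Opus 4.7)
The plan is to construct linear forms $\alpha_i^{(j)}\in(\mathbb{L}^{n_j})^\ast$ for $i\in[c]$ and $j\in[d-1]$, living in a field extension $\mathbb{L}/\mathbb{K}$ of degree at most $M_2(d,c)$, such that
\[
T_k\bigl(\alpha_{i_1}^{(1)},\dots,\alpha_{i_{d-1}}^{(d-1)}\bigr) \;=\; \delta_{i_1=\cdots=i_{d-1}=k}
\qquad\text{for all } k,i_1,\dots,i_{d-1}\in[c].
\]
Combined with the projection on the $d$-th factor selecting the slices $T_1,\dots,T_c$, the resulting linear maps $g_j(\cdot)=(\alpha_1^{(j)}(\cdot),\dots,\alpha_c^{(j)}(\cdot))$ satisfy $(g_1\otimes\cdots\otimes g_d)(T)=I_c$ over $\mathbb{L}$, proving $\Q_{\mathbb{L}}(T)\ge c$.

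I will build the $\alpha_i^{(j)}$'s iteratively on $s=0,\dots,c-1$, at each step enlarging the field by a factor bounded in $d$ and $c$. At the passage from $s$ to $s+1$, with $\alpha_i^{(j)}\in(\mathbb{L}_s^{n_j})^\ast$ already fixed for $i\in[s]$, $j\in[d-1]$, the equations involving at least one index equal to $s+1$ become, upon substituting these fixed values, a polynomial system in the new unknowns $x_j:=\alpha_{s+1}^{(j)}$. Each equation is multilinear in the $x_j$'s of total degree equal to the number of slots occupied by new variables, hence between $1$ and $d-1$; exactly one equation, the diagonal $T_{s+1}(x_1,\dots,x_{d-1})=1$, has right-hand side $1$, and all others vanish.

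The key step is to apply Proposition~\ref{solution of constructable set} with $f_1,\dots,f_m$ an $\mathbb{L}_s$-basis of the graded span $V_{s+1}$ of the "$=0$" polynomials and with $g:=T_{s+1}(x_1,\dots,x_{d-1})$. The $R_1$-sequence hypothesis on $f_1,\dots,f_m$ is supplied by Proposition~\ref{mincodim and R_eta-general field}, which reduces to a lower bound on $\height(\mathfrak{a}(V_{s+1}))$; by the height$\,=\,$codim identification in Proposition~\ref{computing GR} applied to the effective tensors obtained from the $T_k$'s after partial contraction along the fixed $\alpha_i^{(j)}$'s, this height is controlled by the geometric rank of the partially-contracted subspace of $\spa\{T_1,\dots,T_c\}$, which by hypothesis starts from $\ge M_1(d,c)$. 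The exclusion $g\notin(f_1,\dots,f_m)$ follows from a complementary existence argument: the same geometric rank bound forces $\{f_i=0\}$ to meet $\{g\ne 0\}$ over $\overline{\mathbb{K}}$, after which Nullstellensatz keeps $g$ outside the ideal.

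The main obstacle is to quantify how much the geometric rank of $\spa\{T_1,\dots,T_c\}$ can drop under the sequence of partial contractions accumulated through the $c$ iterations; each step fixes some slots of each $T_k$, and one must prove that the cumulative drop is bounded by a function of $d$ and $c$ alone. Once this stability is established, one chooses $M_1(d,c)$ to dominate the cumulative loss plus the threshold $H(d-1,c^d)$ from Proposition~\ref{mincodim and R_eta-general field}, and iterates the bound $L(d-1,c^d)$ from Proposition~\ref{solution of constructable set} across all $c$ steps to obtain $M_2(d,c)\le L(d-1,c^d)^{c}$. Making the propagation of $\GR$ under such contractions quantitative is the technical heart of the proof.
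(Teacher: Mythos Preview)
Your iterative skeleton matches the paper's: build vectors $u^{(i)}_j$ step by step so that the slices evaluate to the Kronecker pattern, enlarging the field by a bounded factor at each step via Proposition~\ref{solution of constructable set}. The gap is in how you verify the $R_1$-sequence hypothesis needed to invoke that proposition.

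You propose to apply Proposition~\ref{mincodim and R_eta-general field} directly to a basis of the graded span $V_{s+1}$ of the ``$=0$'' forms, which requires $\height(\mathfrak{a}(f))$ to be large for \emph{every} nonzero homogeneous $f\in V_{s+1}$. You then reduce this to ``stability of $\GR$ under partial contractions,'' but that stability is false: for the identity tensor $I_n$ one has $\GR(I_n)=n$, yet contracting against a coordinate functional yields a rank-one matrix, so a single contraction against a specific vector can collapse $\GR$ from $n$ to $1$. The vectors $u^{(i)}_j$ fixed in earlier steps are not generic---they were chosen to satisfy earlier constraints---so you have no control over how badly the contractions degrade $\GR$. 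Moreover, within a fixed degree $e<d-1$, the space $V_{s+1}$ contains linear combinations of forms obtained by contracting along \emph{different} subsets of factors; such a combination is not the polynomial of any tensor in a single tensor space, so the identification in Proposition~\ref{computing GR} does not apply to it at all.

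The paper avoids this issue entirely. Instead of trying to make the ``$=0$'' forms themselves an $R_1$-sequence, it feeds them (together with extra linear forms enforcing $v^{(i)}\notin V_i$, which your sketch omits) into Proposition~\ref{small subalgebra general field}. That proposition outputs an $R_1$-sequence $g_1,\dots,g_t$ with $t\le I(d,N+(d-1)m)$ bounded purely in $(d,c)$, such that all constraints lie in $\mathbb{K}[g_1,\dots,g_t]$. The exclusion $g=f_{T'_{m+1}}\notin(g_1,\dots,g_t)$ then follows from a strength argument on the \emph{uncontracted} tensor: if $g\in(g_1,\dots,g_t)$ then $\str(T'_{m+1})\le t$, hence $\GR(T'_{m+1})\le 2t$ by Proposition~\ref{str vs GR}, contradicting $\GR(\spa\{T_1,\dots,T_c\})\ge M_1(d,c)$ once $M_1(d,c)>2t$. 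No tracking of $\GR$ through contractions is needed.
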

\begin{proof}
Let $I$ be the function in Proposition~\ref{small subalgebra general field} and let $M_1: \mathbb{N} \times \mathbb{N}$ be a function such that $M_1(d,c) > 2 I(d,  c^d - c(c-1)^{d-1} + (d-1)c)$ for any $(d,c) \in \mathbb{N} \times \mathbb{N}$.  Denote $\mathbb{L}_0 \coloneqq \mathbb{K}$.  For each $1 \le s \le c$,  we claim that there exist a function $C$ on $\mathbb{N} \times \mathbb{N} \times \mathbb{N}$,  an extension $\mathbb{L}_s$ of $\mathbb{L}_{s-1}$,  vectors $u^{(i)}_{j} \in (\mathbb{L}_s^{n_i})^\ast$ where $1 \le i \le d-1$,  $1 \le j \le s$,  and tensors $ T'_{k} \in \spa_{\mathbb{L}_s} \lbrace T_1,\dots,  T_c \rbrace$ where $1 \le k \le c$ such that  
    \begin{enumerate} [label=(\arabic*)]
    \item $[\mathbb{L}_s:\mathbb{L}_{s-1}]\le C(s,  d,  c)$.  \label{d-minGR-2:item1}
        \item For each $1 \le i \le d-1$, $u^{(i)}_{1},\cdots,u^{(i)}_{s}$ are linearly independent over $\mathbb{L}_s$.  \label{d-minGR-2:item2}
         \item $T'_{1},\dots,T'_{c}$ are linearly independent over $\mathbb{L}_s$.  \label{d-minGR-2:item3}
        \item $\left\langle T'_{k},  u^{(1)}_{j_1} \otimes \cdots \otimes u^{(d-1)}_{j_{d-1}} \right\rangle =\delta(k,  j_1,\cdots,j_{d-1})$ for each $1 \le k \le c$ and $1 \le j_1,\dots,  j_{d-1}\le s$, where $\delta$ is the Kronecker symbol.  \label{d-minGR-2:item4}
    \end{enumerate}
If the claim is true for $s=c$,  then we let $\mathbb{L} \coloneqq \mathbb{L}_c$ so that $\Q_{\mathbb{L}}(T) \ge c$ and $[\mathbb{L} : \mathbb{K}] \le M_2(d,c) \coloneqq \prod_{s=1}^c C(s,d,c)$.

It is left to prove the claim.  We proceed by induction on $s$.  For $s = 1$,  we observe that the polynomial $f_{T_1}$ corresponding to $T_1$ via the isomorphism \eqref{eq:iso} is nonzero and multilinear.  By \cite[Lemma~2.1]{alon1999combinatorial},  there exists some $(u^{(1)}_1,\dots,u^{(d-1)}_1) \in (\mathbb{K}^{n_1})^\ast \oplus \cdots \oplus (\mathbb{K}^{n_{d-1}})^\ast$ such that  
\[
\left\langle T_{1},  u^{(1)}_1\otimes \cdots \otimes u^{(d-1)}_1 \right\rangle  = 1.
\]
We let $\mathbb{L}_1 = \mathbb{K} $ and set
\[
T'_1 \coloneqq T_1,\quad T'_{k} \coloneqq T_{k}- \left\langle T_{k},  u^{(1)}_1\otimes \cdots \otimes u^{(d-1)}_1 \right\rangle T_1,\quad 2 \le k \le c.
\] 
It is straightforward to verify that $\mathbb{L}_1$,  $\{u^{(1)}_1,\dots,  u^{(d-1)}_1\}$ and $\{ T'_1,\dots,  T'_c \}$ satisfy \ref{d-minGR-2:item1}--\ref{d-minGR-2:item4}.

Assume that the claim is true for each $s \le m < c$.  Then there is a tower of field extensions $\mathbb{L}_m/\mathbb{L}_{m-1} / \cdots /\mathbb{L}_1/\mathbb{K}$ such that $[\mathbb{L}_{s} : \mathbb{L}_{s-1}] \le C(s,  c,d)$ for any $1 \le s \le m$.  Here $C$ is a function on $\mathbb{N} \times \mathbb{N} \times \mathbb{N}$.  We let $u^{(i)}_{j} \in (\mathbb{L}_m^{n_{i}})^\ast$ where $1 \le i \le d-1$,  $1 \le j \le m$ and $T'_{k} \in \spa_{\mathbb{L}_m} \left\lbrace T_1,\dots,  T_c \right\rbrace$ where $1 \le k \le c$ be vectors and tensors satisfying \ref{d-minGR-2:item2}--\ref{d-minGR-2:item4}.  We prove the claim for $s = m + 1$ by constructing the desired $u$'s and $T'$'s from the ones for $s = m$,  in the following three steps: 
\begin{enumerate}[label= Step \arabic*.]
\item \label{d-minGR-2:step1} For each $1 \le i \le d-1$,  we denote $V_i \coloneqq \spa_{\mathbb{L}_m} \left\lbrace u^{(i)}_1,  \dots,u^{(i)}_m \right\rbrace \subseteq (\mathbb{L}_m^{n_i})^\ast$.  We first show that there is an extension $\mathbb{L}_{m+1}/\mathbb{L}_{m}$ and $v^{(i)}\in (\mathbb{L}_{m+1}^{n_{i}})^\ast \setminus V_i^{\mathbb{L}_{m+1}}$ where $1\le i\le d-1$,  such that 
\begin{enumerate}[label=(\alph*)]
\item $[\mathbb{L}_{m+1}:\mathbb{L}_{m}]\le C(m+1,  c,d)$ for some function $C$ on $\mathbb{N} \times \mathbb{N} \times \mathbb{N}$.\label{d-minGR-2:itema}
\item $\left\langle T'_{m+1},  v^{(1)}\otimes \cdots \otimes v^{(d-1)} \right\rangle = 1$.  \label{d-minGR-2:itemb}
\item For each $(d-1)$-tuple $\left( w^{(1)},  \dots,  w^{(d-1)} \right) \in \prod_{j=1}^{d-1} \left\lbrace u^{(i)}_{1},\dots,u^{(i)}_m,  v^{(i)}\right\rbrace$,  we have 
\[
\left\langle T'_k,  w^{(1)}\otimes \cdots \otimes w^{(d-1)} \right\rangle =  0,\quad 1\le k \le c,
\] 
if $w^{(i)}\neq v^{(i)}$ for some $1 \le i \le d-1$.\label{d-minGR-2:itemc}
\end{enumerate}
We denote by $f_T$ the multilinear polynomial on $(\mathbb{L}_s^{n_1})^\ast \oplus \cdots \oplus (\mathbb{L}_s^{n_{d-1}})^\ast$ determined by a tensor $T\in \mathbb{L}_s^{n_1} \otimes \cdots \otimes \mathbb{L}_s^{n_{d-1}}$ via the isomorphism \eqref{eq:iso}.  Then \ref{d-minGR-2:itemb} and \ref{d-minGR-2:itemc} are respectively equivalent to 
\begin{enumerate}[label=(\alph*'),start = 2]
\item $f_{T'_{m+1}} (v^{(1)},  \dots,  v^{(d-1)}) \ne 0$. \label{d-minGR-2:itemb'} 
\item If $w^{i} = v^{(i)}$ for some $1 \le i \le d-1$,  then $f_{T'_k}(w^{(1)},  \dots,  w^{(d-1)}) = 0$ for all $1 \le k \le c$.\label{d-minGR-2:itemc'}
\end{enumerate}
Let $R$ be the polynomial ring on $(\mathbb{L}_m^{n_1})^\ast \oplus \cdots \oplus (\mathbb{L}_m^{n_{d-1}})^\ast$,  and we regard $v^{(1)}, \dots,  v^{(d-1)}$ in \ref{d-minGR-2:itemb'} and \ref{d-minGR-2:itemc'} as vectors of independent indeterminates. We observe that there is only one form $f_{T'_{m+1}} \in R$ in \ref{d-minGR-2:itemb'} and $\deg f_{T'_{m+1}} \le d - 1$.  Since $u^{(i)}_j$,  $1 \le i \le d-1$,  $1 \le j \le m$ are fixed vectors,  there are $N \coloneqq c\sum_{i=1}^{d-2}\binom{d-1}{i}m^{d-1-i}$ forms in \ref{d-minGR-2:itemc'},  each of which has degree at most $d - 2$.  Since $m  \le  c - 1$,  we may obtain 
\[
 N  \le c\sum_{i=1}^{d-2}\binom{d-1}{i} (c-1)^{d-1-i} \le  c(c^{d-1} - (c-1)^{d-1} ). 
 \]
To simplify notations,  we denote the form in \ref{d-minGR-2:itemb'} by $g$,  and the $N$ forms in \ref{d-minGR-2:itemc'} by $f_1,\dots,  f_N$.  Let $n \coloneqq n_1 + \cdots + n_{d-1}$.  We choose a basis $\alpha_1,\dots,  \alpha_n$ of $\mathbb{L}_{m}^{n_{1}}\oplus\cdots\oplus\mathbb{L}_{m}^{n_{d-1}}$ such that the restriction of $\alpha_1,\dots,  \alpha_{(d-1)m}$ to $V_1 \oplus \cdots \oplus V_{d-1}$ is a basis of $(V_1 \oplus \cdots \oplus V_{d-1})^\ast$.  
\begin{enumerate}[label = Step 1.\arabic*.]
\item \label{d-minGR-2:step1.1} We apply Proposition~\ref{small subalgebra general field} to obtain an $R_1$-sequence of forms $g_1,\dots,  g_t \in R$ such that $t \le I(d,  N + (d-1)m)$,  $\left\lbrace f_1,\dots,  f_N,  \alpha_1,\dots,  \alpha_{(d-1)m}\right\rbrace \subseteq \mathbb{L}_m[g_1,\dots,  g_t]$ and $\deg(g_i) \le \max\{d-1,1\}$ for each $1 \le i \le t$.  By the choice  $M_1(d,c)$,  we have $M_1(d,c) > 2 I(d,  N + (d-1)m)$.  This implies $g \not\in (g_1,\dots,  g_t)$.  Otherwise,  we have $t \ge \str(g)$.  Then Proposition~\ref{minGR stability} and Proposition~\ref{str vs GR} would lead to a contradiction:
\begin{align*}
I(d,  N + (d-1)m)   \ge \GR\left( \spa_{\mathbb{K}} \{ T_1,\dots ,  T_c\} \right)/2 \ge M_1(d,c)/2.
\end{align*}
\item \label{d-minGR-2:step1.2} According to Proposition~\ref{solution of constructable set},  there is an extension $\mathbb{L}_{m+1}/\mathbb{L}_m$ of degree at most $C(m+1,d,t) \coloneqq L(d,t)$ such that 
\[
f_1(v) = \cdots = f_t(v) = \alpha_1(v) = \cdots = \alpha_{(d-1)m}(v) = 0,\quad g(v) = 1
\]
for some nonzero $v \coloneqq (v^{(1)},  \dots,  v^{(d-1)}) \in (\mathbb{L}_{m+1}^{n_1})^\ast \oplus \cdots \oplus (\mathbb{L}_{m+1}^{n_{d-1}})^\ast$.  Here $L$ is the function in Proposition~\ref{solution of constructable set}.  Since $v \ne 0$ and $\{\alpha_1,\dots,  \alpha_{(d-1)m}\}$ restricts to a basis of $(V_1 \oplus \cdots \oplus V_{d-1})^\ast$,  we must have $v \not\in V_1^{\mathbb{L}_{m+1}} \oplus \cdots \oplus V_{d-1}^{\mathbb{L}_{m+1}}$.  Therefore,  we find $\mathbb{L}_{m+1}/\mathbb{L}_m$ and $(v^{(1)}, \dots,  v^{(d-1)}) \in \left( (\mathbb{L}_{m+1}^{n_1})^\ast \setminus V_1^{\mathbb{L}_{m+1}} \right) \times \cdots \times \left( (\mathbb{L}_{m+1}^{n_{d-1}})^\ast \setminus V_{d-1}^{\mathbb{L}_{m+1}} \right)$ satisfying \ref{d-minGR-2:itema}--\ref{d-minGR-2:itemc}. 
\end{enumerate}
\item \label{d-minGR-2:step2} Now we are ready to construct $u$'s and $T'$'s for $s = m +1$.  For each $1 \le i \le d-1$,  we let $u^{(i)}_{m+1} \coloneqq v^{(i)}$ and 
\[
\widehat{T}'_k \coloneqq \begin{cases}
T'_{m+1},  &\text{if k = m+1} \\
T'_k - \left\langle T'_k,  u^{(1)}_{m+1} \otimes \cdots u^{(d-1)}_{m+1} \right\rangle T'_{m+1},  &\text{if~} 1 \le k  \le c  \text{~and~} k\neq m+1
\end{cases}
\]
\item  \label{d-minGR-2:step3} We verify that $u^{(i)}_j$ and $\widehat{T}'_k$ satisfy \ref{d-minGR-2:item2}--\ref{d-minGR-2:item4},  where $1 \le i \le d-1$,  $1 \le j \le m+1$ and $1 \le k \le c$.  For $1 \le k \le c$ and $k\neq m+1$,  we have 
\begin{align*}
\left\langle \widehat{T}'_k,  u^{(1)}_{j_{1}}\otimes \cdots \otimes u^{(d-1)}_{j_{d-1}} \right\rangle
 &=  \left\langle T'_k,  u^{(1)}_{j_1}\otimes \cdots \otimes u^{(d-1)}_{j_{d-1}}\right\rangle   \\
&- \left\langle T'_k,  u^{(1)}_{m+1}\otimes \cdots \otimes u^{(d-1)}_{m+1}\right\rangle
\left\langle T'_{m+1},  u^{(1)}_{j_{1}}\otimes \cdots \otimes u^{(d-1)}_{j_{d-1}} \right\rangle.
\end{align*}
If $(j_{1},\dots,j_{d-1})\neq(m+1,\dots,m+1)$,  then by \ref{d-minGR-2:itemc} and the induction hypothesis,  we may derive $\left\langle T'_{m+1},  u^{(1)}_{j_{1}}\otimes \cdots \otimes u^{(d-1)}_{j_{d-1}} \right\rangle=0$ and 
\[
\left\langle \widehat{T}'_k,  u^{(1)}_{j_{1}}\otimes \cdots \otimes u^{(d-1)}_{j_{d-1}} \right\rangle = \left\langle T'_k,  u^{(1)}_{j_1}\otimes \cdots \otimes u^{(d-1)}_{j_{d-1}}\right\rangle  = \delta (k,j_1,\dots,  j_{d-1}).
\] 
If $(j_{1},\dots,j_{d-1})=(m+1,\cdots,m+1)$,  then  \ref{d-minGR-2:itemb} implies 
\begin{align*}
\left\langle \widehat{T}'_k,  u^{(1)}_{m+1}\otimes \cdots \otimes u^{(d-1)}_{m+1} \right\rangle
 &=  \left\langle T'_k,  u^{(1)}_{m+1}\otimes \cdots \otimes u^{(d-1)}_{m+1}\right\rangle   \\
&- \left\langle T'_k,  u^{(1)}_{m+1}\otimes \cdots \otimes u^{(d-1)}_{m+1}\right\rangle
\left\langle T'_{m+1},  u^{(1)}_{m+1}\otimes \cdots \otimes u^{(d-1)}_{m+1} \right\rangle \\
&=0. 
\end{align*}
For $k = m+1$,   we have 
\[
\left\langle \widehat{T}'_k,  u^{(1)}_{j_1}\otimes \cdots \otimes u^{(d-1)}_{j_{d-1}} \right\rangle = 
\left\langle T'_{m+1},  u^{(1)}_{j_1}\otimes \cdots \otimes u^{(d-1)}_{j_{d-1}} \right\rangle.
\]
If $(j_{1},\dots, j_{d-1})\neq(m+1,\dots,m+1)$,  then we may derive $\left\langle T'_{m+1},  u^{(1)}_{j_1}\otimes \cdots \otimes u^{(d-1)}_{j_{d-1}} \right\rangle = 0$ from \ref{d-minGR-2:itemc} and the induction hypothesis.  If $(j_{1},\dots, j_{d-1}) = (m+1,\dots,m+1)$,  then \ref{d-minGR-2:itemb} leads to $\left\langle T'_{m+1},  u^{(1)}_{m+1}\otimes \cdots \otimes u^{(d-1)}_{m+1} \right\rangle = 1$.  \qedhere
\end{enumerate}
 \end{proof}
    
\begin{proof}[Proof of Theorem~\ref{thm:general}]
Let $M_1$ and $M_2$ be the functions on $\mathbb{N} \times \mathbb{N}$ in Lemma~\ref{d-minGR-2}.  We set $B \coloneqq M_2$.  Recall from Proposition~\ref{computing GR} that
\[
\GR (T) = \min \{ \codim X_r + r : r \in \mathbb{N} \},  
\]
where $X_r = \{ u \in (\mathbb{K}^n_d)^\ast: \GR( \left\langle T,  u \right\rangle ) = r \}$ for each $r \in \mathbb{N}$.  Let $s$ be the smallest integer such that $\Q_{\mathbb{F}} (T) \le s-1$ for any degree $M_2(d,s-1)$ extension $\mathbb{F}/\mathbb{K}$.  Then by Lemma~\ref{d-minGR-2},  for each $s$-dimensional subspace $V$ of $(\mathbb{K}^{n_d})^\ast$,  there exists some $v \in V \setminus \{0\}$ such that $\GR \left( \left\langle T, v \right\rangle  \right) \le M_1(d,s)-1$.  We notice that by the minimality of $s$,  there exists some $\mathbb{L}/\mathbb{K}$ of degree $M_2(d,s-1)$ such that $\Q_{\mathbb{L}} (T) = s-1$.  Since $\Q_{\mathbb{L}} (T) \le \GR(T)$ by \cite[Theorem~5]{kopparty2020geometric},  we obtain 
\begin{align*}
[\mathbb{L}: \mathbb{K}] &= M_2(d,s-1) \le B(d,  \GR(T)).
\end{align*} 
We define
\[
A(d,  s) \coloneqq \max \left\lbrace
s + M_1(d,  s + 1) ,   C_2(d) \left[  \frac{M_1(d,s)-1}{C_1(d)} 
+ s + 2      \right]
\right\rbrace. 
\]
Here $C_1$ and $C_2$ are the functions in Theorem~\ref{AR stability}.  

The rest of the proof is split according to whether $\mathbb{K}$ is a finite field.  We first suppose that $\mathbb{K}$ is an infinite field.  According to Proposition~\ref{codim for infinite},  we have $\codim X_{M_1(d,s)-1} \le s$ which implies $\GR(T) \le s + M_1(d,s) - 1$.  Thus,  we have $\GR(T) \le A (d,  \Q_{\mathbb{L}}(T))$.    

Next we assume $\mathbb{K} = \mathbb{F}_q$ and denote $m \coloneqq |X_{M_1(d,s)-1}|$.  For each $v \in X_{M_1(d,s)-1} \setminus \{0\}$,  we define 
\[
\sigma(v) \coloneqq \left\lbrace
V \in \Gr(s,  n_d):  v \in V
\right\rbrace
\]
and notice that $\sigma(v) \simeq \Gr(s-1,  n_d-1)$. Since $\cup_{v \in X_{M_1(d,s)-1}\setminus \{0\}} \sigma(v) = \Gr(s,n_d)$,  we obtain
\[
m \ge \frac{\left\lvert \Gr(s, n_d) \right\rvert}{|\Gr(s-1,n_d-1)|} = \frac{q^{n_{d}}-1}{q^{s}-1}\ge q^{n_{d}-s-2}, 
\]
where the equality follows from Lemma~\ref{counting subspace}.  By Proposition~\ref{codim for finite},  there are functions $C_1$ and $C_2$ such that 
\[
\GR(T) \le C_2(d) \left[  \frac{M_1(d,s)-1}{C_1(d)} 
+ s + 2      
\right]. \qedhere
\]
\end{proof}

\section{Proof of Theorem~\ref{thm:general1}}\label{sec:thm1.2}
The following lemma is a parallel version of Lemma~\ref{d-minGR-2} for algebraically closed field. 
\begin{lemma}\label{lem:closed} 
There is a function $M$ on $\mathbb{N} \times \mathbb{N}$ with the following property.  Suppose $\mathbb{K}$ is a field and $T\in \mathbb{K}^{n_{1}}\otimes\cdots\otimes\mathbb{K}^{n_{d}}$ is a tensor with $d$-slices $T_1,\dots,  T_c \in \mathbb{K}^{n_{1}}\otimes\cdots\otimes\mathbb{K}^{n_{d-1}}$ satisfying 
\begin{enumerate}[label = (\roman*)] 
\item $\mathbb{K}$ is algebraically closed.
\item $T_1,\dots, T_c$ are $\mathbb{K}$-linearly independent. 
\item $\GR \left(  \spa_{\mathbb{K}} \{T_1,\dots,  T_c \} \right) \ge M(d,c)$. 
\end{enumerate}
Then we have $\Q (T) \ge c$.  
\end{lemma}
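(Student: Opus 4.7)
The plan is to adapt the inductive construction in the proof of Lemma~\ref{d-minGR-2} to the algebraically closed setting, where everything simplifies because no field extensions are needed and the characteristic hypothesis can be dropped.

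First, I would set $M(d,c)$ using the function $D$ from Theorem~\ref{algebraically closed small subalgebra} rather than the function $I$ from Proposition~\ref{small subalgebra general field}. Concretely, taking $M(d,c) > 2\, D\!\left(d,\; c^{d}-c(c-1)^{d-1}+(d-1)c\right)$ will suffice. The point is that Theorem~\ref{algebraically closed small subalgebra}, unlike Proposition~\ref{small subalgebra general field}, carries no assumption on $\ch(\mathbb{K})$ because it does not route through the stability result Theorem~\ref{stability of minstr}.

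Next I would run the same induction on $s$, $1\le s\le c$, constructing vectors $u^{(i)}_{j}\in(\mathbb{K}^{n_{i}})^{*}$ for $1\le i\le d-1$, $1\le j\le s$, and tensors $T'_{k}\in\spa_{\mathbb{K}}\{T_{1},\dots,T_{c}\}$ satisfying the linear-independence conditions and the contraction identities $\langle T'_{k}, u^{(1)}_{j_{1}}\otimes\cdots\otimes u^{(d-1)}_{j_{d-1}}\rangle=\delta(k,j_{1},\dots,j_{d-1})$ of that proof. The base case $s=1$ is unchanged and uses \cite[Lemma~2.1]{alon1999combinatorial}. In the inductive step, I would encode the conditions for the new vector $(v^{(1)},\dots,v^{(d-1)})$ as a system consisting of one distinguished form $g=f_{T'_{m+1}}$ together with at most $N\le c(c^{d-1}-(c-1)^{d-1})$ vanishing forms (for orthogonality to previously chosen $u$'s) and $(d-1)m$ linear forms (to force $v^{(i)}\notin V_{i}$). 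Applying Theorem~\ref{algebraically closed small subalgebra} yields an $R_{1}$-sequence $g_{1},\dots,g_{t}$ of forms containing these in its $\mathbb{K}$-algebra, with $t\le D(d,N+(d-1)m)$.

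The crucial non-membership $g\notin(g_{1},\dots,g_{t})$ is established exactly as in Step~1.1 of the proof of Lemma~\ref{d-minGR-2}: if it failed, then $\str(g)\le t\le D(d,N+(d-1)m)$, while Proposition~\ref{str vs GR} combined with $\GR(\spa\{T_{1},\dots,T_{c}\})\ge M(d,c)$ would force $\str(g)\ge M(d,c)/2>D(d,N+(d-1)m)$, a contradiction. Note that here the algebraically closed hypothesis allows us to bypass Proposition~\ref{minGR stability}, since the geometric rank is already computed over $\mathbb{K}=\overline{\mathbb{K}}$. Once non-membership is known, Proposition~\ref{solution of constructable set}, together with its observation that over an algebraically closed field one may take $\mathbb{L}=\mathbb{K}$ (by Hilbert's Nullstellensatz applied to the prime ideal $(g_{1},\dots,g_{t})$), furnishes the required $v\in(\mathbb{K}^{n_{1}})^{*}\oplus\cdots\oplus(\mathbb{K}^{n_{d-1}})^{*}$ satisfying $g(v)=1$ and vanishing at the other forms. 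Setting $u^{(i)}_{m+1}\coloneqq v^{(i)}$ and updating the $T'_{k}$ exactly as in Step~2 of the proof of Lemma~\ref{d-minGR-2}, together with the same verification in Step~3, completes the inductive step. At $s=c$, the vectors $u^{(i)}_{j}$ together with the coefficient matrix expressing $T'_{k}$ in terms of the $T_{l}$ (which themselves arise from contracting $T$ in the $d$-th factor) assemble into linear maps $g_{1},\dots,g_{d}$ witnessing $(g_{1}\otimes\cdots\otimes g_{d})(T)=I_{c}$, hence $\Q(T)\ge c$.

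The main point of care, rather than a genuine obstacle, is simply to audit the proof of Lemma~\ref{d-minGR-2} and verify that each appeal to a result requiring $\ch(\mathbb{K})=0$ or $\ch(\mathbb{K})>d$ can be replaced by an appeal to an algebraically closed analogue: Proposition~\ref{small subalgebra general field} is replaced by Theorem~\ref{algebraically closed small subalgebra}, the invocation of Proposition~\ref{minGR stability} becomes unnecessary, and the field extensions produced by Proposition~\ref{solution of constructable set} collapse to $\mathbb{K}$ itself. No genuinely new ingredient is required.
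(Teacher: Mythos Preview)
Your proposal is correct and follows essentially the same approach as the paper: the paper's proof likewise defines $M(d,c)>2D(d,\,c^{d}-c(c-1)^{d-1}+(d-1)c)$, runs the same induction on $s$, and explicitly says to repeat Step~1 of the proof of Lemma~\ref{d-minGR-2} with Proposition~\ref{small subalgebra general field} replaced by Theorem~\ref{algebraically closed small subalgebra}, then to follow Steps~2 and~3 verbatim. Your additional remarks---that Proposition~\ref{minGR stability} becomes unnecessary over $\mathbb{K}=\overline{\mathbb{K}}$ and that Proposition~\ref{solution of constructable set} yields $\mathbb{L}=\mathbb{K}$---make explicit what the paper leaves implicit, but the arguments are otherwise identical.
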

\begin{proof}
Let $D$ be the function in Theorem~\ref{algebraically closed small subalgebra} and let $M: \mathbb{N} \times \mathbb{N}$ be a function such that $M(d,c) > 2 D(d,  c^d - c(c-1)^{d-1} + (d-1)c)$ for any $(d,c) \in \mathbb{N} \times \mathbb{N}$.  

We claim that for each $1 \le s \le c$,  there exist vectors $u^{(i)}_{j} \in (\mathbb{K}^{n_i})^\ast$ where $1 \le i \le d-1$,  $1 \le j \le s$,  and tensors $ T'_{k} \in \spa_{\mathbb{K}} \lbrace T_1,\dots,  T_c \rbrace$ where $1 \le k \le c$ such that  
    \begin{enumerate} [label=(\arabic*)]
        \item For each $1 \le i \le d-1$, $u^{(i)}_{1},\cdots,u^{(i)}_{s}$ are linearly independent.  \label{lem:closed:item1}
         \item $T'_{1},\dots,T'_{c}$ are linearly independent over $\mathbb{L}_s$.  \label{lem:closed:item2}
        \item $\left\langle T'_{k},  u^{(1)}_{j_1} \otimes \cdots \otimes u^{(d-1)}_{j_{d-1}} \right\rangle =\delta(k,  j_1,\cdots,j_{d-1})$ for each $1 \le k \le c$ and $1 \le j_1,\dots,  j_{d-1}\le s$, where $\delta$ is the Kronecker symbol.  \label{lem:closed:item3}
    \end{enumerate}
If the claim is true,  then it is straightforward to verify that $\Q(T) \ge c$. 

The claim is proved by induction on $s$.  The proof for $s = 1$ is the same as that in Lemma~\ref{d-minGR-2}.  Assume that the claim is true for $s = m < c$.  We let $u^{(i)}_{j} \in (\mathbb{K}^{n_i})^\ast$ where $1 \le i \le d-1$,  $1 \le j \le m$ and $T'_{k} \in \spa_{\mathbb{K}} \left\lbrace T_1,\dots,  T_c \right\rbrace$ where $1 \le k \le c$ be vectors and tensors satisfying \ref{lem:closed:item1}--\ref{lem:closed:item3}.  Next we prove the claim for $s = m + 1$.  For each $1 \le i \le d-1$,  we denote $V_i \coloneqq \spa_\mathbb{K} \left\lbrace u^{(i)}_1,  \dots,u^{(i)}_m \right\rbrace \subseteq (\mathbb{K}^{n_i})^\ast$.  Repeating the argument for \ref{d-minGR-2:step1} in the proof of Lemma~\ref{d-minGR-2},  with Proposition~\ref{small subalgebra general field} replaced by Theorem~\ref{algebraically closed small subalgebra} in \ref{d-minGR-2:step1.1},  we can show that there are $v^{(i)}\in (\mathbb{K}^{n_{i}})^\ast \setminus V_i$ where $1\le i\le d-1$,  such that 
\begin{enumerate}[label=(\alph*)]
\item $\left\langle T'_{m+1},  v^{(1)}\otimes \cdots \otimes v^{(d-1)} \right\rangle = 1$.  \label{lem:closed:itema}
\item For each $(d-1)$-tuple $\left( w^{(1)},  \dots,  w^{(d-1)} \right) \in \prod_{j=1}^{d-1} \left\lbrace u^{(i)}_{1},\dots,u^{(i)}_m,  v^{(i)}\right\rbrace$,  we have 
\[
\left\langle T'_k,  w^{(1)}\otimes \cdots \otimes w^{(d-1)} \right\rangle =  0,\quad 1\le k \le c,
\] 
if $w^{(i)}\neq v^{(i)}$ for some $1 \le i \le d-1$.\label{lem:closed:itemb}
\end{enumerate}
The rest of the proof is verbatim the same as \ref{d-minGR-2:step2} and \ref{d-minGR-2:step3} in the proof of Lemma~\ref{d-minGR-2}.
\end{proof}

\begin{proof}[Proof of Theorem~\ref{thm:general1}]
We denote $c \coloneqq \Q(T)$ and $T_u \coloneqq \left\langle T,  u \right\rangle$ for each $u \in (\mathbb{K}^{n_d})^{\ast}$.  By Lemma~\ref{lem:closed},  for any $(c+1)$-dimensional subspace $V$ of $\mathbb{K}^{n_{d}}$,  there exists $v\in V\setminus \{0\}$ such that $\GR( T_v ) < M
(d,c+1)$.  We consider 
\[
X \coloneqq \{ u \in (\mathbb{K}^{n_3})^{\ast}: \rank (T_u)\le M
(d,c+1) \}.
\] 
Since $\mathbb{K}$ is algebraically closed, Proposition~\ref{codim for infinite} implies that $\codim X \le c+1$.  By Proposition~\ref{computing GR},  we conclude that $\GR(T) \le M(d,c+1) + c + 1 \eqqcolon C(d,  c)$.
\end{proof}

\section{Proof of Theorem~\ref{3-GR<Q infinite}}\label{sec:thm1.4}
The idea that underlies the proof of Theorem~\ref{3-GR<Q infinite} is similar to those of Theorems~\ref{thm:general}. and \ref{thm:general1}.  The only difference is that we may invoke Lemma~\ref{3-minGR} for $d = 3$ instead of Lemma~\ref{d-minGR-2} or Lemma~\ref{lem:closed} for general $d$.
\begin{proof}[Proof of Theorem~\ref{3-GR<Q infinite}]
We denote $c \coloneqq \Q(T)$ and $T_u \coloneqq \left\langle T,  u \right\rangle$ for each $u \in (\mathbb{K}^{n_3})^{\ast}$.  By Lemma~\ref{3-minGR},  for any $(c+1)$-dimensional subspace $V$ of $\mathbb{K}^{n_{3}}$,  there exists $v\in V\setminus \{0\}$ such that $\GR( T_v ) < 2(c+1)c$.  We consider 
\[
X \coloneqq \{ u \in (\mathbb{K}^{n_3})^{\ast}: \rank (T_u)\le 2(c+1)c - 1 \}.
\]

Suppose that $\mathbb{K}$ is an infinite field.  Then Proposition~\ref{codim for infinite} implies that $\codim X \le c+1$.  By Proposition~\ref{computing GR},  we conclude that 
\[
\GR(T) \le 2(c+1)c - 1 + c + 1 = 2c^2 + 3c. 
\] 

Next we assume $\mathbb{K} = \mathbb{F}_q$ and denote $m \coloneqq |X|-1$.  For each $v \in X \setminus \{0\}$,  we define 
\[
\sigma(v) \coloneqq \left\lbrace
V \in \Gr(c+1,  n_3):  v \in V
\right\rbrace
\]
and notice that $\sigma(v) \simeq \Gr(c,  n_3-1)$.  Since $\cup_{v \in X\setminus \{0\}} \sigma(v) = \Gr(c+1,n_3)$,  we obtain
\[
m \ge \frac{\left\lvert \Gr(c+1, n_3) \right\rvert}{|\Gr(c,n_3-1)|} = \frac{q^{n_{3}}-1}{q^{c+1}-1}\ge q^{n_{3}-c-2},
\]
where the equality follows from Lemma~\ref{counting subspace}.  By Proposition~\ref{codim for finite},  there are constants $c_1$ and $c_2$ such that 
\[
\GR(T) \le c_2 \left[  \frac{2(c+1)c - 1}{c_1} 
+ c + 2         
\right] = \frac{2c_2 }{c_1} c^2 + \left( \frac{2c_2 }{c_1} + c_2  \right) c + \left( 2c_2  - \frac{c_2 }{c_1} \right). \qedhere
\]
\end{proof}

\section{Conclusion}
The rate of growth of functions in Theorems~\ref{thm:general} and \ref{thm:general1} with respect to $\Q(T)$ remains elusive.  However,  Theorem~\ref{3-GR<Q infinite} and \cite[Theorem~1.4]{derksen2024subrank} suggest the conjecture that follows is reasonable.
\begin{conjecture}[Polynomial bound of $\PR(T)$ in terms of $\Q(T)$]\label{conj:bound}
Given a positive integer $d$,  there exist a polynomial $\varphi$ of degree at most $(d-1)$ with the following property.  For any field $\mathbb{K}$ and any tensor $T \in \mathbb{K}^{n_1} \otimes \cdots \otimes \mathbb{K}^{n_d}$,  we have $\GR(T) = \varphi(\Q(T))$.  
\end{conjecture}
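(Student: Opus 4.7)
The natural plan is to prove Conjecture~\ref{conj:bound} by induction on $d$, treating it as the upper bound $\GR(T) \le \varphi(\Q(T))$ (since equality is inconsistent with the identity tensor versus a generic tensor). The base case $d=3$ is exactly Theorem~\ref{3-GR<Q infinite}, where the degree $d-1 = 2$ is optimal by the generic computation. For the inductive step, the strategy would be to revisit the proofs of Theorems~\ref{thm:general} and \ref{thm:general1} and replace the qualitative functions $M_1(d,c)$, $M(d,c)$ by explicit polynomial functions of $c$ of degree at most $d-2$.

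The main body of work would be a quantitative sharpening of Lemma~\ref{d-minGR-2} (or Lemma~\ref{lem:closed} over algebraically closed fields) stating: if $T_1,\dots,T_c \in \mathbb{K}^{n_1}\otimes\cdots\otimes\mathbb{K}^{n_{d-1}}$ are linearly independent $d$-slices of $T$ satisfying $\GR(\spa_{\mathbb{K}}\{T_1,\dots,T_c\}) \ge g_d(c)$ for some explicit polynomial $g_d$ with $\deg g_d \le d-2$, then $\Q(T) \ge c$ (possibly after passing to a bounded-degree extension, which would not affect the conclusion over infinite $\mathbb{K}$ by Corollary~\ref{cor:Qstability}). Given such a lemma, the remaining argument parallels the proof of Theorem~\ref{3-GR<Q infinite} verbatim: the set $X \coloneqq \{u \in (\mathbb{K}^{n_d})^{\ast}: \GR(T_u) < g_d(c+1)\}$ would satisfy $\codim X \le c+1$ by Proposition~\ref{codim for infinite}, and Proposition~\ref{computing GR} would then yield
\[
\GR(T) \le g_d(\Q(T)+1) + \Q(T) + 1,
\]
a polynomial of degree at most $d-1$ in $\Q(T)$.

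The hard part is producing the polynomial function $g_d(c)$. The existing proof of Lemma~\ref{d-minGR-2} invokes Proposition~\ref{small subalgebra general field}, which in turn chains through Theorem~\ref{algebraically closed small subalgebra} (the Ananyan--Hochster small subalgebra theorem), Theorem~\ref{codim vs str} (height vs.\ strength), and Theorem~\ref{stability of minstr} (stability of strength under field extensions). Each of these theorems currently provides only an \emph{unspecified} function on its inputs; in particular, the functions $D(d,m)$, $E(d,m)$ and $F(d,m,r)$ are produced by an iterated inductive construction whose known estimates are far worse than polynomial. Consequently, inspecting the proof of Lemma~\ref{d-minGR-2} shows that the bound $M_1(d,c)$ one extracts grows super-polynomially in $c$, and this is the true obstruction.

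Therefore I would expect the conceptual skeleton of my proposed proof to go through without difficulty, but the quantitative input from commutative algebra to be the genuine obstacle: establishing Conjecture~\ref{conj:bound} requires proving polynomial (and indeed of degree at most $d-2$ in $c$) versions of the quantitative Ananyan--Hochster-type theorems used in Section~\ref{sec:poly}, which is itself a well-known open problem. An alternative route worth attempting, especially for order four, would be to avoid these strength-theoretic inputs altogether and instead design an analogue of Lemma~\ref{3-minGR} directly adapted to higher $d$ using only multilinear, non-commutative-algebraic arguments; such a lemma, phrased purely in terms of matrix-like rank conditions on contractions of $T$ with $(d-2)$ vectors, would inherit polynomial bounds automatically.
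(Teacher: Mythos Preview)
The statement you are attempting is Conjecture~\ref{conj:bound}, and the paper does \emph{not} prove it: it is posed as an open problem in the concluding section, motivated by Theorem~\ref{3-GR<Q infinite} and the generic computation from \cite{derksen2024subrank}. There is therefore no proof in the paper to compare your proposal against.

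Your proposal is not a proof either, and you say so yourself. What you have written is an accurate diagnosis of why the conjecture is open: the skeleton of the argument in Theorems~\ref{thm:general} and \ref{thm:general1} would indeed propagate a polynomial bound on $M_1(d,c)$ (or $M(d,c)$) into a polynomial bound on $\GR(T)$ via Propositions~\ref{codim for infinite} and \ref{computing GR}, exactly as you describe; but the existing proofs of Lemmas~\ref{d-minGR-2} and \ref{lem:closed} feed through the Ananyan--Hochster machinery (Theorems~\ref{algebraically closed small subalgebra}, \ref{codim vs str}, \ref{stability of minstr} and Proposition~\ref{small subalgebra general field}), whose known quantitative bounds are far from polynomial. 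This matches the paper's own remark that ``the rate of growth of functions in Theorems~\ref{thm:general} and \ref{thm:general1} with respect to $\Q(T)$ remains elusive.'' Your alternative suggestion---to bypass strength-theoretic inputs by finding a direct higher-$d$ analogue of Lemma~\ref{3-minGR}---is a reasonable line of attack, but you have not carried it out, and neither has the paper.

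In short: your analysis of the obstruction is correct and consistent with the paper, but neither you nor the paper has a proof of this conjecture.
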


If Conjecture~\ref{conj:bound} is true,  then the same argument for Corollary~\ref{cor:Qstability} implies the stability of $\Q(T)$ under field extensions.  Consequently,  this leads us to the following conjecture,  which is a priori weaker than Conjecture~\ref{conj:bound}.
\begin{conjecture}[Stability of $\Q(T)$ under field extensions]\label{conj:stability}
Given a positive integer $d$,  there exist a polynomial $\psi$ of degree at most $(d-1)$ with the following property.  For any field $\mathbb{K}$ and any tensor $T \in \mathbb{K}^{n_1} \otimes \cdots \otimes \mathbb{K}^{n_d}$,  we have $\Q_{\overline{\mathbb{K}}}(T) = \psi(\Q(T))$.  
\end{conjecture}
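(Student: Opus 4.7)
The plan is to reduce Conjecture~\ref{conj:stability} to the stronger Conjecture~\ref{conj:bound} and then attempt a polynomial refinement of the proof of Theorem~\ref{thm:general1}. First, recall that $\GR_{\overline{\mathbb{K}}}(T) = \GR(T)$ (as noted before Corollary~\ref{cor:Qstability}) and $\Q_{\overline{\mathbb{K}}}(T) \le \GR_{\overline{\mathbb{K}}}(T)$ by \cite[Theorem~5]{kopparty2020geometric}. Hence it suffices to bound $\GR(T)$ by a degree-$(d-1)$ polynomial in $\Q(T)$, which is exactly Conjecture~\ref{conj:bound}. This reduction already explains the degree $d-1$ appearing in Conjecture~\ref{conj:stability}: it matches the generic scaling $\GR(T)\asymp \Q(T)^{d-1}$ suggested by the order-three computation in \eqref{eq:QvsGR} extrapolated to arbitrary $d$, since generically $\Q(T)=\Theta(n^{1/(d-1)})$ while $\GR(T)=n$.

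The natural strategy to prove Conjecture~\ref{conj:bound} is to mirror the $d=3$ argument of Theorem~\ref{3-GR<Q infinite}. That proof rests on Lemma~\ref{3-minGR}, which provides a \emph{polynomial} threshold: $c$ linearly independent slices whose span has every nonzero element of rank at least $2c(c-1)$ force $\Q(T)\ge c$. Combined with Proposition~\ref{computing GR} together with Proposition~\ref{codim for infinite} (or Proposition~\ref{codim for finite} in the finite-field case), this yields $\GR(T)=O(\Q(T)^2)$. To imitate this for general $d$, one would need a polynomial version of Lemma~\ref{lem:closed}: namely, a function $M(d,c)$ that is a polynomial in $c$ of degree $d-2$ with the property that, whenever $c$ linearly independent $d$-slices span a subspace whose geometric rank exceeds $M(d,c)$, then $\Q(T)\ge c$. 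The remainder of the argument then carries over verbatim and delivers $\GR(T)=O(\Q(T)^{d-1})$, from which Conjecture~\ref{conj:stability} follows with $\psi$ of degree $d-1$.

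The main obstacle is the quantitative dependence in the small-subalgebra theorem. The proof of Lemma~\ref{lem:closed} requires the threshold $M(d,c)>2D(d,c^d-c(c-1)^{d-1}+(d-1)c)$, where $D$ comes from Theorem~\ref{algebraically closed small subalgebra}; and the presently available bounds on $D(d,m)$ are tower-type in $m$, very far from polynomial. A direct application therefore cannot produce the desired degree-$(d-1)$ estimate. Genuine progress will require either a drastic quantitative improvement of the Ananyan--Hochster small-subalgebra theorem, reducing $D(d,m)$ to a polynomial in $m$ (which would simultaneously settle many other strength-based questions), or a route that entirely bypasses the reduction to $R_1$-sequences. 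One candidate for the latter is a Galois-descent argument: starting from an $\overline{\mathbb{K}}$-rational diagonalization $I_r \le_{\overline{\mathbb{K}}} T$ with $r=\Q_{\overline{\mathbb{K}}}(T)$, one would analyze the Zariski closure of the restriction orbit $G\cdot T$ (where $G=\prod_j \GL_{n_j}$) and attempt to extract a $\mathbb{K}$-rational subdiagonal of size $\Omega(r^{1/(d-1)})$ via a constructive-point argument in the spirit of Proposition~\ref{solution of constructable set}. Ensuring that this descent loses only a polynomial factor of the correct exponent, rather than the tower-type losses incurred by the current strength-based machinery, is the crux of the problem.
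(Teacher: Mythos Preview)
The statement is a \emph{conjecture}, and the paper does not supply a proof of it. Immediately before stating Conjecture~\ref{conj:stability}, the paper only observes that it would follow from Conjecture~\ref{conj:bound} via the argument for Corollary~\ref{cor:Qstability}, and explicitly labels Conjecture~\ref{conj:stability} as ``a priori weaker'' than Conjecture~\ref{conj:bound}. That is the entirety of the paper's treatment.

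Your proposal carries out precisely this reduction (using $\Q_{\overline{\mathbb{K}}}(T)\le\GR_{\overline{\mathbb{K}}}(T)=\GR(T)$ and then aiming for a degree-$(d-1)$ polynomial bound $\GR(T)\le\varphi(\Q(T))$), so on that point you match the paper exactly. You then go further and correctly identify why the present methods do not yield Conjecture~\ref{conj:bound}: the threshold in Lemma~\ref{lem:closed} is governed by the function $D$ from Theorem~\ref{algebraically closed small subalgebra}, whose known bounds are tower-type in the second argument, so no polynomial $M(d,c)$ is available. This diagnosis is accurate and is consistent with the paper's own remark that the growth rate of the functions in Theorems~\ref{thm:general} and~\ref{thm:general1} ``remains elusive''. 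Your suggested alternative routes (a polynomial Ananyan--Hochster bound, or a Galois-descent argument bypassing $R_1$-sequences) are reasonable speculation but are not in the paper and are not proofs.

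In summary: there is no proof in the paper to compare against; your reduction step coincides with the paper's, and the remainder of your write-up is an honest account of why the conjecture is open rather than a proof of it.
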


\subsection*{Acknowledgment} We would like to thank Tigran Ananyan and Longteng Chen for helpful discussions.
\bibliographystyle{abbrv}
\bibliography{ref}
\end{document}